\newcommand{\si}{\sigma}
\newcommand{\tht}{\theta}
\newcommand{\id}{\mathrm{id}}
\newcommand{\ot}{\otimes}
\newcommand{\trl}{\triangleleft}
\newcommand{\trr}{\triangleright}
\def\ppr{\rightharpoonup}
\def\ppl{\leftharpoonup}
\newcommand{\li}{{}_{1}}
\newcommand{\lii}{{}_{2}}
\newcommand{\lmo}{{}_{(0)}} 
\newcommand{\loo}{{}_{(0)}}
\newcommand{\loi}{{}_{(-1)}}
\newcommand{\lmoo}{{}_{(0)}}
\newcommand{\lmi}{{}_{(1)}}
\newcommand{\lmoi}{{}_{(-1)}}
\newcommand{\mo}{{}_{(0)}}
\newcommand{\mi}{{}_{(1)}}
\newcommand{\moi}{{}_{(-1)}}
\newcommand{\boo}{{}_{[0]}}
\newcommand{\bi}{{}_{[1]}}
\newcommand{\bii}{{}_{[2]}}
\newcommand{\boi}{{}_{[-1]}}
\newcommand{\poo}{{}_{[0]}}
\newcommand{\poi}{{}_{[-1]}}
\newcommand{\ppi}{{}_{<1>}}
\newcommand{\pii}{{}_{<2>}}
\newcommand{\qi}{{}_{\{1\}}}
\newcommand{\qii}{{}_{\{2\}}}
\def\rbiprod{{\cdot\kern-.33em\triangleright\!\!\!<}}
\def\lbiprod{{>\!\!\!\triangleleft\kern-.33em\cdot\, }}
\def\lrbiprod{{\ \cdot\kern-.60em\triangleright\kern-.33em\triangleleft\kern-.33em\cdot\, }}
\def\lprod{{>\!\!\!\triangleleft\kern-.33em\ \, }}
\newcommand{\lrcoprod}{{\,\blacktriangleright\!\!\blacktriangleleft\, }}
\newtheorem{theorem}{Theorem}[section]
\newtheorem{lemma}[theorem]{Lemma}
\newtheorem{corollary}[theorem]{Corollary}
\theoremstyle{definition}
\newtheorem{definition}[theorem]{Definition}
\newtheorem{remark}[theorem]{Remark}
\title{Extending structures for left-symmetric bialgebras}
\author{Tao Zhang, Hui-Jun Yao}
\date{}
\begin{document}
 \maketitle

 \setcounter{section}{0}

\begin{abstract}
We introduce the concept of braided left-symmetric bialgebras  and construct cocycle bicrossproduct left-symmetric bialgebras.
As an application, we solve the extending problem for left-symmetric bialgebras by using some non-abelian cohomology theory.
\par\smallskip
{\bf 2020 MSC:} 17A30, 17B62, 17D99

\par\smallskip
{\bf Keywords:}
Braided  left-symmetric bialgebras, cocycle bicrossproducts, extending structures,  non-abelian cohomology.
\end{abstract}

\tableofcontents

\section{Introduction}
Left-symmetric algebras (also called pre-Lie algebras, quasi-associative algebras, Vinberg algebras and so on),  as a class of nonassociative algebras, are arising from the study of
convex homogenous cones,  affine manifolds and affine structures on Lie groups (\cite{Ki01}, \cite{P01}, \cite{Vi01}). It also plays an important role  in many fields of mathematics and mathematical physics. Recently, left-symmetric algebras are widely developed in many papers. The concept of  Hom-left-symmetric algebras was introduced  in \cite{Mak001} and played important roles in the study of Hom-Lie bialgebras and Hom-Lie
2-algebras  \cite{sh001,sh002}. Hom-left-symmetric algebras were studied from several aspects. The geometrization of Hom-left-symmetric algebras was studied in  \cite{ZQ001}.
The concept of  left-symmetric bialgebras was provided by Bai in \cite{Bai08}, where the left-symmetric analogue of the classical Yang-Baxter equation was investigated in details.

The theory of extending structure for many types of algebras were well  developed by A. L. Agore and G. Militaru in \cite{AM1,AM2,AM3,AM5,AM6}.
Let $A$ be an algebra and $E$ a vector space containing $A$ as a subspace.
The extending problem is to describe and classify all algebra structures on $E$ such that $A$ is a subalgebra of $E$.
They show that associated to any extending structure of $A$ by a complement space $V$, there is an  unified product on the direct sum space  $E\cong A\oplus V$.
Recently, extending structures for 3-Lie algebras, Lie bialgebras,  infinitesimal bialgebras, anti-flexible bialgebras and Lie conformal superalgebras were studied  in \cite{Z2,Z3,Z4,ZY,ZCY}.

In this paper we introduced the concept of braided  left-symmetric bialgebras and the construction of cocycle bicrossproduct left-symmetric bialgebras.
It is proved that  braided left-symmetric bialgebras give rise to braided Lie bialgebras.
We will show that this new concept will play a key role in considering extending problem for    left-symmetric bialgebras.
As an application, we solve the extending problem for    left-symmetric bialgebras by using some non-abelian cohomology theory.

This paper is organized as follows. In Section 2, we recall some
definitions and fixed some notations  of left-symmetric algebras. In Section 3, we introduce the concept of braided  left-symmetric bialgebras and provethe bosonisation
theorem associating braided left-symmetric bialgebras to ordinary left-symmetric bialgebras. At the end of this section, we also show the connection between braided left-symmetric bialgebras and braided Lie bialgebras.
In  section 4, we define the notion of matched pairs of  braided    left-symmetric bialgebras.
Besides, we construct the cocycle bicrossproduct left-symmetric bialgebras through two generalized braided left-symmetric bialgebras.
In section 5, we study the extending problems for left-symmetric bialgebras and show that they can be classified by some non-abelian cohomology theory.

Throughout the following of this paper, all vector spaces will be over a fixed field of character zero.
An algebra  $(A, \cdot)$  is always a left-symmetric algebra and  a coalgebra $(A, \Delta)$ is always a  left-symmetric coalgebra.
The identity map of a vector space $V$ is denoted by $\id_V: V\to V$ or simply $\id: V\to V$.
The flip map $\tau: V\ot V\to V\ot V$ is defined by $\tau(u\ot v)=v\ot u$ for all $u, v\in V$.

\section{Preliminaries}

\begin{definition} A  left-symmetric algebra $(A, \cdot)$ is a vector space equipped
 with a  product $\cdot: A\otimes A\rightarrow A$ such that the following left-symmetric condition is satisfied:
\begin{equation}
(a \cdot b)\cdot c-a\cdot (b \cdot c)=(b \cdot a)\cdot c-b\cdot (a \cdot c).
 \end{equation}
\end{definition}
In the following, we always omit $ ``\cdot" $ and write the product by $ab$ for simplicity.
It is well known that a left-symmetric algebra $(A, \cdot)$ give rise to a Lie algebra $\mathfrak{g}(A)$ with commutator
$[a,b]=ab-ba$.

\begin{definition}A  left-symmetric coalgebra $A$ is a vector space equipped
 with a coproduct $\Delta: A\rightarrow A\otimes A$ such that the following left-symmetric condition is satisfied:
\begin{equation}
(\Delta \otimes \id)\Delta(a)-(\id\otimes\Delta)\Delta (a)=\tau_{12}\left( (\Delta \otimes \id)\Delta(a)-(\id\otimes\Delta)\Delta (a)\right),
 \end{equation}
 for  any  $a \in A$, where $\tau_{13}(a\ot b\ot c)=c\ot b\ot a$.

 We denote a left-symmetric coalgebra by $(A, ~\Delta)$.
\end{definition}

\begin{definition}\cite{Bai08}
Let $A$ be a vector space. A left-symmetric bialgebra structure on $A$ is a pair of linear maps $(\alpha, \beta)$ such that $\alpha: A\rightarrow A \otimes A$,  $\beta: A ^{*}\rightarrow A ^{*} \otimes A ^{*}$ and
\begin{enumerate}
\item[(1)]$\alpha^{*}: A^{*} \otimes A^{*}\rightarrow  A^{*}$ is a left-symmetric algebra structure on $A^{*}$;

\item[(2)]$\beta^{*}: A  \otimes A \rightarrow  A $ is a left-symmetric algebra structure on $A $;

\item[(3)] $\alpha$ is a 1-cocycle of $\mathfrak{g}(A)$ associated to $L\otimes 1+1\otimes ad$ with values in $A\otimes A$,
that is
\begin{equation}
 \alpha ([a, b])=ab_{1}\otimes b_{2}+b_{1}\otimes [a, b_{2}]-ba_{1}\otimes a_{2}-a_{1}\otimes [b, a_{2}],
 \end{equation}
 for any $a, b \in A$, $\alpha(a)=a_{1}\otimes a_{2}\in A\otimes A$.
\item[(4)] $\beta$ is a 1-cocycle of $\mathfrak{g}(A^{*})$ associated to $L\otimes 1+1\otimes ad$ with values in $A^{*}\otimes A^{*}$,
   that is
\begin{equation}\label{wyyy01}
 \beta([f, g])=fg_{1}\otimes g_{2}+g_{1}\otimes [f, g_{2}]-gf_{1}\otimes f_{2}-f_{1}\otimes [g, f_{2}],
 \end{equation}
 for any $f, g \in A^{*}$, $\beta(f)=f_{1}\otimes f_{2}\in A^{*}\otimes A^{*}$.
\end{enumerate}
\end{definition}

\begin{remark}
Since $(A^{*}, ~\alpha^{*})$ is a left-symmetric algebra,  using the duality between $A$ and $A^*$, we obtain
 \begin{equation*}
  <\alpha^{*}(f\otimes g),  a>=<f\otimes g, \alpha(a)>=<f\otimes g, a_{1}\otimes a_{2}> =f(a_{1})g(a_{2}),
 \end{equation*}
 for any $f, g \in A^{*}$, $a \in A$. Thus, for any $a, b \in A$,  the left hand side of equation \eqref {wyyy01} is equal to
 $$
\begin{aligned}
& <\beta([f, g]), a\otimes b>=<[f, g], \beta^{*}(a\otimes b)>=<(\alpha^{*}-\alpha^{*}\tau)(f\otimes g), ab>=<f\otimes g, (\alpha -\tau\alpha )(ab)>,
\end{aligned}
$$
 and the right hand side is equal to
\begin{eqnarray*}
&&<fg_{1}\otimes g_{2}+g_{1}\otimes [f, g_{2}]-gf_{1}\otimes f_{2}-f_{1}\otimes [g, f_{2}], a\otimes b>\\
&=&<fg_{1}\otimes g_{2}, a\otimes b>+<g_{1}\otimes [f, g_{2}], a\otimes b>-<gf_{1}\otimes f_{2}, a\otimes b>-<f_{1}\otimes [g, f_{2}], a\otimes b>\\
&&=<(\alpha^{*}\otimes \id)(\id \otimes \beta)(f\otimes g), a\otimes b>+<(\id\otimes (\alpha^{*}-\alpha^{*}\tau)) \tau_{12}(\id \otimes \beta)(f\otimes g), a\otimes b>\\
&& -<(\alpha^{*}\otimes \id)(\id \otimes \beta)\tau(f\otimes g), a\otimes b>-<(\id\otimes (\alpha^{*}-\alpha^{*}\tau)) \tau_{12}(\id \otimes \beta)\tau(f\otimes g), a\otimes b>\\
&=&<f\otimes g, (\id \otimes \beta^{*})(\alpha \otimes \id)(a\otimes b)>+<f\otimes g, (\id \otimes \beta^{*})\tau_{12}(\id \otimes( \alpha -\tau\alpha ))(a\otimes b)>\\
&&-<f\otimes g, \tau(\id \otimes \beta^{*})(\alpha \otimes \id)(a\otimes b)>-<f\otimes g, \tau(\id \otimes \beta^{*})\tau_{12}(\id \otimes( \alpha -\tau\alpha ))(a\otimes b)>\\
&=&<f\otimes g, (\id \otimes \beta^{*})(a_{1}\otimes a_{2}\otimes b)>+<f\otimes g, (\id \otimes \beta^{*})\tau_{12}  (a\otimes b_{1}\otimes b_{2})>\\
&&-<f\otimes g, (\id \otimes \beta^{*})\tau_{12}(a\otimes b_{2}\otimes b_{1})>-<f\otimes g,  \tau(\id \otimes \beta^{*})(a_{1}\otimes a_{2}\otimes b)>\\
&&-<f\otimes g, \tau(\id \otimes \beta^{*})\tau_{12}(a\otimes b_{1}\otimes b_{2})>+<f\otimes g, \tau(\id \otimes \beta^{*})\tau_{12}(a\otimes b_{2}\otimes b_{1})>\\
&=&<f\otimes g,  a_{1}\otimes a_{2} b >+<f\otimes g,   b_{1}\otimes a  b_{2}> -<f\otimes g,  b_{2}\otimes ab_{1} >\\
&&-<f\otimes g, a_{2} b\otimes a_{1}>-<f\otimes g, a b_{2}\otimes b_{1}>+<f\otimes g,   a b_{1}\otimes b_{2}>\\
&=&<f\otimes g,  a_{1}\otimes a_{2} b+ b_{1}\otimes a  b_{2}-b_{2}\otimes ab_{1}- a_{2} b\otimes a_{1}-a b_{2}\otimes b_{1}+a b_{1}\otimes b_{2}>\\
&=&<f\otimes g,  (\id-\tau)(a_{1}\otimes a_{2} b+ b_{1}\otimes a  b_{2} +a b_{1}\otimes b_{2})>.
\end{eqnarray*}
Thus, we have equation \eqref {wyyy01} is equal to
 \begin{equation}\label{wyyy02}
 \alpha(ab)-\tau\alpha (ab)=(\id-\tau)\Big( a_{1}\otimes a_{2} b+ b_{1}\otimes a  b_{2} +a b_{1}\otimes b_{2}  \Big).
 \end{equation}
\end{remark}

If we denote $\alpha:=\Delta$,  then we can redefine the left-symmetric bialgebra  as follows.

\begin{definition} \label{dfnlb}
A  left-symmetric bialgebra $A$ is a vector space equipped
simultaneously with a left-symmetric algebra structure $(A, \cdot)$  and a left-symmetric  coalgebra structure $(A, \Delta)$ such that the following
compatibility conditions are satisfied:
\begin{equation}\label{eq:LB0}
 \Delta([a, b])=\sum  a b\li\ot b\lii+b\li\ot [a, b\lii]-b a\li\ot a\lii -a\li \ot [b, a\lii] ,
 \end{equation}
 \begin{equation}\label{eq:LB1}
 (\id-\tau )\Delta(ab)=\sum (\id-\tau )\left( a\li\ot a\lii b+ab\li\ot b\lii+b\li\ot ab\lii \right)
 \end{equation}
where $[a, b]$ is abbreviated as $ab-ba$ and we denote this left-symmetric bialgebra by $(A, \cdot, \Delta)$.
\end{definition}

For convenience, we would like to denote $\Delta (a)\cdot b: = \sum a\li\ot a\lii b$,   $a\cdot \Delta(b): =\sum  a b\li\ot b\lii$ and $a\bullet \Delta(b): =\sum  b\li\ot a b\lii$.  Then we can also write the compatibility conditions  as
\begin{equation}\label{bialg01}
\Delta ([a, b])=\Delta (a)\cdot b+a\cdot \Delta(b)+a\bullet \Delta(b)-\Delta (b)\cdot a-b\cdot \Delta(a)-b\bullet \Delta(a),
\end{equation}
\begin{equation}\label{bialg02}
(\id-\tau )\Delta(ab)=(\id-\tau)\Big(\Delta (a)\cdot b+a\cdot \Delta(b)+a\bullet \Delta(b)\Big),
\end{equation}


\begin{definition}
Let ${H}$ be a left-symmetric algebra and $V$ be a vector space. Then $V$ is called an ${H}$-bimodule if there is a pair of linear maps $ \trr: {H}\otimes V \to V, (x, v) \to x \trr v$ and $\trl: V\otimes {H} \to V, (v, x) \to v \trl x$  such that the following conditions hold:
\begin{eqnarray}
&&  (xy) \trr v- x\trr (y\trr v)=(yx)\trr v-y\trr (x\trr v), \\
&&  (v \trl x) \trl y-v \trl (xy) =(x\trr v)\trl y-x \trr (v\trl y),
\end{eqnarray}
for all $x, y\in {H}$ and $v\in V$.
\end{definition}
The category of  bimodules over $H$ is denoted  by ${}_{H}\mathcal{M}{}_{H}$.

\begin{definition}
Let ${H}$ be a left-symmetric coalgebra and  $V$  a vector space. Then $V$ is called an ${H}$-bicomodule if there is a pair of linear maps $\phi: V\to {H}\otimes V$ and $\psi: V\to V\otimes {H}$  such that the following conditions hold:
\begin{eqnarray}
 &&\left(\Delta_{H} \otimes \id _{V}\right)\phi(v)-\left(\id _{H} \otimes \phi\right) \phi(v)=\tau_{12}\Big(\left(\Delta_{H} \otimes \id _{V}\right)\phi(v)-\left(\id _{H} \otimes \phi\right) \phi(v)\Big), \\
 &&\left( \psi\otimes \id _{H} \right) \psi(v)-\left(\id_{V}\ot\Delta_{H} \right)\psi(v)=\tau_{12}\Big((\phi\ot \id_{H})\psi(v)-(\id_{H} \ot \psi)\phi(v)\Big).
\end{eqnarray}
If we denote by  $\phi(v)=v\moi\ot v\mo$ and $\psi(v)=v\mo\ot v\mi$, then the above equations can be written as:
\begin{eqnarray}
  &&\Delta_{H}\left(v_{(-1)}\right) \otimes v_{(0)}-v_{(-1)} \otimes \phi\left(v_{(0)}\right)=\tau_{12}\Big(\Delta_{H}\left(v_{(-1)}\right) \otimes v_{(0)}-v_{(-1)} \otimes \phi\left(v_{(0)}\right)\Big),\\
  &&\psi\left(v_{(0)}\right) \otimes v_{(1)}-v_{(0)} \otimes \Delta_{H}\left(v_{(1)}\right)=\tau_{12}\Big(\phi(v_{(0)})\ot v_{(1)}-v_{(-1)}\ot \psi(v_{(0)})\Big).
\end{eqnarray}
\end{definition}
The category of  bicomodules over $H$ is denoted by ${}^{H}\mathcal{M}{}^{H}$.

\begin{definition}
Let ${H}$ and  ${A}$ be left-symmetric algebras. An action of ${H}$ on ${A}$ is a pair of linear maps $\trr: {H}\otimes {A} \to {A}, (x,  a) \to x \trr a$ and $\trl: {A}\otimes {H} \to {A}, (a,  x) \to a \trl x$  such that $A$ is an $H$-bimodule and  the following conditions hold:
\begin{eqnarray}
  &&x\trr (ab)-(x\trr a)b=a(x\trr b)-(a \trl x)b, \\
  &&(ab)\trl x-a (b\trl x)=(ba)\trl x-b(a\trl x),
\end{eqnarray}
for all $x\in {H}$ and $a, b\in {A}$. In this case, we call $(A,  \trr,  \trl)$ to be an $H$-bimodule left-symmetric algebra.
\end{definition}

\begin{definition}
Let ${H}$ and  ${A}$ be left-symmetric coalgebras. An coaction of ${H}$ on ${A}$ is a pair of linear maps $\phi: {A}\to {H}\otimes {A}$ and $\psi: {A}\to {A}\otimes {H}$ such that $A$ is an $H$-bicomodule and  the following conditions hold:
\begin{eqnarray}
  &&(\id_H \otimes \Delta_{A})\phi(a)-(\phi\otimes\id_A) \Delta_{A}(a)=\tau_{12}\Big((\id_A\otimes \phi) \Delta_{A}(a)-(\psi\otimes\id_A)\Delta_{A}(a)\Big),\\
  &&(\Delta_{A}\otimes \id_H)\psi(a)-(\id_A\otimes \psi) \Delta_{A}(a)=\tau_{12}\Big((\Delta_{A}\otimes \id_H)\psi(a)-(\id_A\otimes \psi) \Delta_{A}(a)\Big).
\end{eqnarray}
If we denote by  $\phi(a)=a\moi\ot a\mo$ and $\psi(a)=a\mo\ot a\mi$, then the above equation can be written as
\begin{eqnarray}
  &&a_{(-1)} \otimes \Delta_{A}\left(a_{(0)}\right)-\phi\left(a_{1}\right) \otimes a_{2}=\tau_{12}\Big(a_{1}\ot \phi(a_{2})-\psi(a_{1})\ot a_{2}\Big),\\
    &&\Delta_{A}\left(a_{(0)}\right) \otimes a_{(1)}-a_{1} \otimes \psi\left(a_{2}\right)=\tau_{12}\Big( \Delta_{A}\left(a_{(0)}\right) \otimes a_{(1)}-a_{1} \otimes \psi\left(a_{2}\right)\Big)
\end{eqnarray}
for all $a\in {A}$. In this case,  we call $(A,  \phi,  \psi)$ to be an $H$-bicomodule left-symmetric coalgebra.
\end{definition}

\begin{definition}
Let $({A}, ~\cdot)$ be a given   left-symmetric  algebra (left-symmetric coalgebra,   left-symmetric   bialgebra) and  $E$ be a vector space.
An extending system of ${A}$ through $V$ is a left-symmetric  algebra  (left-symmetric coalgebra,   left-symmetric  bialgebra) on $E$
such that $V$ a complement subspace of ${A}$ in $E$, the canonical injection map $i: A\to E, a\mapsto (a, 0)$  or the canonical projection map $p: E\to A, (a, x)\mapsto a$ is a left-symmetric  algebra (left-symmetric coalgebra,    left-symmetric bialgebra) homomorphism.
The extending problem is to describe and classify up to an isomorphism  the set of all  left-symmetric   algebra  (left-symmetric coalgebra,   left-symmetric  bialgebra) structures that can be defined on $E$.
\end{definition}

We remark that our definition of extending system of ${A}$ through $V$ contains not only extending structure in \cite{AM1,AM2,AM3}
but also the global extension structure in \cite{AM5}.
In fact, the canonical injection map $i: A\to E$ is a left-symmetric   (co)algebra homomorphism if and only if $A$ is a   left-symmetric  sub(co)algebra of $E$.

\begin{definition}
Let ${A} $ be a   left-symmetric   algebra (left-symmetric  coalgebra, left-symmetric    bialgebra)and  $E$  be a left-symmetric  algebra  (left-symmetric coalgebra,   left-symmetric bialgebra) such that
${A}$ is a subspace of $E$ and $V$ a complement of
${A}$ in $E$. For a linear map $\varphi: E \to E$ we
consider the diagram:
\begin{equation}\label{eq:ext1}
\xymatrix{
   0  \ar[r]^{} &A \ar[d]_{\id_A} \ar[r]^{i} & E \ar[d]_{\varphi} \ar[r]^{\pi} &V \ar[d]_{\id_V} \ar[r]^{} & 0 \\
   0 \ar[r]^{} & A \ar[r]^{i'} & {E} \ar[r]^{\pi'} & V \ar[r]^{} & 0.
   }
\end{equation}
where $\pi: E\to V$ are the canonical projection maps and $i: A\to E$ are the inclusion maps.
We say that $\varphi: E \to E$ \emph{stabilizes} ${A}$ if the left square of the diagram \eqref{eq:ext1} is  commutative.
Let $(E,  \cdot)$ and $(E,  \cdot')$ be two   left-symmetric  algebra (left-symmetric coalgebra,   left-symmetric  bialgebra) structures on $E$. $(E,  \cdot)$ and $(E,  \cdot')$ are called \emph{equivalent}, and we denote this by $(E, \cdot) \equiv (E,  \cdot')$, if there exists a left-symmetric  algebra (left-symmetric coalgebra,  left-symmetric   bialgebra) isomorphism $\varphi: (E, \cdot)
\to (E, \cdot')$ which stabilizes ${A}$. Denote by $Extd(E, {A} )$ ($CExtd(E, {A} )$, $BExtd(E, {A} )$) the set of equivalent classes of   left-symmetric   algebra (left-symmetric coalgebra,  left-symmetric   bialgebra) structures on $E$.
\end{definition}

\section{Braided   left-symmetric bialgebras}
In this section, we introduce  the concept of   left-symmetric Hopf bimodule and braided    left-symmetric bialgebra which will be used in the following sections.

\subsection{Left-symmetric Hopf bimodule and braided   left-symmetric bialgebra}


\begin{definition}
 Let $H$ be a    left-symmetric bialgebra. A left-symmetric Hopf bimodule over $H$ is a space $V$ endowed with maps
\begin{align*}
&\trr: H\otimes V \to V, \quad \trl: V\otimes H \to V, \\
&\phi: V \to H \otimes V, \quad  \psi: V \to V\otimes H,
\end{align*}
such that $V$ is simultaneously a bimodule, a  bicomodule over $H$ and satisfying
 the following compatibility conditions:
 \begin{enumerate}
\item[(HM1)]$\phi(v \trl x)-\phi(x\trr v)$\\
$=v_{(-1)} \otimes\left(v_{(0)} \trl x\right)+x_{1}\ot (v\trl x_{2})-x_{1}\ot (x_{2}\trr v)-xv_{(-1)}\ot v_{(0)}-v_{(-1)}\ot (x \trr v_{(0)})$,
\item[(HM2)]
$\psi(x \trr v)-\psi(v\trl x)=\left(x \trr  v_{(0)}\right) \otimes v_{(1)}+v_{(0)}\otimes [x, v_{(1)}] -(v\trl x_{1})\ot x_{2},$
\item[(HM3)] $\phi(x \trr v)-\tau\psi(x\trr v)$\\
$=x_{1} \otimes\left(x_{2}\trr v\right)+x v_{(-1)}\otimes v_{(0)}+v_{(-1)}\ot( x\trr v_{(0)})-v_{(1)}\ot (x\trr v_{(0)})-xv_{(1)} \ot v_{(0)}$,
\item[(HM4)] $\phi(v \trl x)-\tau\psi(v \trl x ) =v_{(-1)} \otimes\left(v_{(0)} \trl x\right)+x_{1}\ot (v\trl x_{2})-v_{(1)}x\ot v_{(0)}- x_{2}\ot (v \trl  x_{1})$,
\end{enumerate}
then $V$ is called a left-symmetric Hopf bimodule over $H$.
\end{definition}
We denote  the  category of  left-symmetric Hopf bimodules over $H$ by ${}^{H}_{H}\mathcal{M}{}^{H}_{H}$.

\begin{definition} Let $H$  be a  left-symmetric bialgebra.
If $A$ is a left-symmetric algebra and a left-symmetric coalgebra in ${}^{H}_{H}\mathcal{M}{}^{H}_{H}$,  we call $A$ a \emph{braided    left-symmetric bialgebra}  if the following conditions are satisfied:
\begin{enumerate}
\item[(BB1)]
$\Delta([a, b])$\\
$=  a b\li\ot b\lii+b\li\ot [a, b\lii]-b a\li\ot a\lii -a\li \ot [b, a\lii]+a_{(0)} \otimes\left(a_{(1)} \trr b\right)$\\
$+(a\trl b_{(-1)}) \otimes b_{(0)}+b_{(0)} \otimes (a\trl b_{(1)}) -b_{(0)} \otimes\left(b_{(1)} \trr a\right)-(b\trl a_{(-1)}) \otimes a_{(0)}-a_{(0)} \otimes (b\trl a_{(1)})$,
\end{enumerate}
\begin{enumerate}
\item[(BB2)]
$(\id-\tau) \Delta(ab) $\\
$=(\id-\tau)\Big(a_{1} \otimes a_{2} b+a b_{1} \otimes b_{2}+ b_{1} \otimes a b_{2}+a_{(0)} \otimes\left(a_{(1)} \trr b\right)+(a\trl b_{(-1)}) \otimes b_{(0)}+b_{(0)} \otimes (a\trl b_{(1)})\Big)$.
\end{enumerate}
\end{definition}
Here we say $A$ to be a left-symmetric algebra and a left-symmetric  coalgebra in ${}^{H}_{H}\mathcal{M}{}^{H}_{H}$ means that $A$ is simultaneously an $H$-bimodule left-symmetric  algebra (left-symmetric coalgebra) and $H$-bicomodule  left-symmetric algebra (left-symmetric coalgebra).

Now we construct left-symmetric bialgebra from braided left-symmetric bialgebra.

\begin{theorem} Let  $H$ be a   left-symmetric bialgebra, $A$ be a left-symmetric  algebra and a  left-symmetric coalgebra in ${}^{H}_{H}\mathcal{M}{}^{H}_{H}$.
We define product and coproduct on the direct sum vector space $E:=A \oplus H$ by
$$
\begin{aligned}
&(a, x)(b, y):=(a b+x\trr b+a \trl y, \, x y), \\
&\Delta_{E}(a, x):=\Delta_{A}(a)+\phi(a)+\psi(a)+\Delta_{H}(x).
\end{aligned}
$$
Then there is a left-symmetric bialgebra structure on $E$  if and only if  $A$ is a braided    left-symmetric bialgebra in ${}^{H}_{H}\mathcal{M}{}^{H}_{H}$. We call $E$ the biproduct of ${A}$ and ${H}$ which will be  denoted   by $A\lbiprod H$.
\end{theorem}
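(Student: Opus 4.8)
The plan is to prove the biproduct theorem by a direct verification that the stated product and coproduct on $E = A \oplus H$ assemble into a left-symmetric bialgebra exactly when the braided compatibility conditions (BB1) and (BB2) hold. The strategy splits naturally into three parts matching Definition \ref{dfnlb}: first the algebra structure, second the coalgebra structure, and third the two bialgebra compatibility equations \eqref{eq:LB0} and \eqref{eq:LB1}. The first two parts are essentially the left-symmetric analogue of a ``Radford biproduct'' (bosonisation) and should follow from the module/comodule axioms together with the $H$-bimodule and $H$-bicomodule left-symmetric (co)algebra conditions already recorded in the Preliminaries.

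For the algebra part, I would expand the left-symmetric identity $(\id-\tau_{\text{assoc}})$ for three elements $(a,x),(b,y),(c,z)$ in $E$ and sort the output by its $A$- and $H$-components. The $H\otimes H\otimes H$ piece reduces to left-symmetry of $H$; the purely $A$ piece reduces to left-symmetry of $A$; and the genuinely mixed terms (those involving one or two copies of $H$ acting on $A$) collapse precisely to the bimodule axioms and the two compatibility conditions in the definition of an $H$-bimodule left-symmetric algebra, namely $x\trr(ab)-(x\trr a)b=a(x\trr b)-(a\trl x)b$ and $(ab)\trl x-a(b\trl x)=(ba)\trl x-b(a\trl x)$. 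Dually, for the coalgebra part I would apply $(\Delta_E\ot\id)\Delta_E-(\id\ot\Delta_E)\Delta_E$ to $(a,x)$, decompose the result into its four graded components, and check that left-symmetric co-identity for $E$ is equivalent to left-symmetry of $\Delta_A$ and $\Delta_H$ together with the bicomodule conditions and the $H$-bicomodule left-symmetric coalgebra conditions. These are bookkeeping computations that I would carry out componentwise rather than in full.

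The crux is the bialgebra compatibility, and this is where I expect the main obstacle. I would compute $\Delta_E([(a,x),(b,y)])$ and the right-hand side of \eqref{eq:LB0} for $E$, then compare graded components. The pure-$H$ component returns the bialgebra compatibility for $H$; the pure-$A$ component returns the first three terms of (BB1) (the ``braided'' deformation of \eqref{eq:LB0}); and the mixed components $A\ot H$ and $H\ot A$ must reproduce exactly the cross terms in (BB1) involving $\trr,\trl,\phi,\psi$, while the terms that mix $H$ acting on $A$ with the cocycle for $H$ must cancel by virtue of the Hopf bimodule conditions (HM1)--(HM4). The delicate point is that (HM1)--(HM4) are precisely the identities needed so that the $\phi,\psi$ coactions interact correctly with the $\trr,\trl$ actions under the coproduct; I would isolate each mixed component, substitute the Sweedler-type notation $\phi(a)=a\moi\ot a\mo$, $\psi(a)=a\mo\ot a\mi$, and verify term-by-term that what remains after using (BB1), the (co)module compatibilities, and the $H$-data is an identity. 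The analogous analysis of \eqref{eq:LB1} for $E$ should yield (BB2) in its $A$-component and impose nothing new beyond the Hopf bimodule conditions in the mixed components.

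Since every displayed condition in the biproduct is linear in the structure maps and graded by the $A/H$ decomposition, the proof is really an equivalence of two lists of component equations: the left-symmetric bialgebra axioms for $E$ on one side, and on the other the fixed axioms for $H$, the Hopf-bimodule axioms (HM1)--(HM4), the module/comodule left-symmetric (co)algebra conditions, and the braided axioms (BB1)--(BB2) for $A$. I would therefore organize the write-up as: (i) reduce each $E$-axiom to its graded components; (ii) observe that the $H$-only and $A$-only components are the (co)algebra and bialgebra axioms for $H$ and the braided axioms for $A$ respectively; (iii) show the mixed components hold if and only if the Hopf-bimodule conditions hold. The only real risk is sign- and index-management in the Sweedler notation for the mixed cocycle terms, so I would fix an explicit convention for $\tau_{12}$, $\tau_{13}$, and the $\bullet$ operation at the outset and apply it uniformly.
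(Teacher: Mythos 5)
Your overall strategy --- expand the three families of axioms for $E$, sort everything by the $A/H$ grading, and identify the resulting component equations with the axioms of $H$, the (co)module compatibilities, the Hopf-bimodule conditions, and (BB1)--(BB2) --- is exactly the paper's proof, and your treatment of the algebra and coalgebra parts matches it. The problem is in your description of the crucial step, the analysis of the compatibility \eqref{eq:LB0} for $E$, where you have the grading wrong. The ``cross terms'' of (BB1), e.g. $a_{(0)}\otimes(a_{(1)}\trr b)$, $(a\trl b_{(-1)})\otimes b_{(0)}$, $b_{(0)}\otimes(a\trl b_{(1)})$, are \emph{not} elements of the mixed components $A\otimes H$ or $H\otimes A$: the coaction leg $a_{(1)}\in H$ (resp. $b_{(-1)},b_{(1)}\in H$) is absorbed by the action $\trr$ (resp. $\trl$), so every one of these terms lies in $A\otimes A$. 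Consequently the \emph{entire} identity (BB1), deformation terms included, is the $A\otimes A$-component of \eqref{eq:LB0} for inputs $a,b\in A$; this is what the paper records as its condition (1). If you carry out your plan as written --- matching only the algebra-coalgebra terms of (BB1) in the $A\otimes A$-component and the action/coaction terms in the mixed components --- you will be equating tensors that live in different direct summands of $E\otimes E$, and imposing that split turns the single equation (BB1) into several separate ones (in particular forcing the action/coaction terms to balance among themselves), which is a strictly stronger, and incorrect, set of conditions.

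What the mixed components actually produce is different: for inputs $a,b\in A$ the $H\otimes A$- and $A\otimes H$-components give $\phi([a,b])=a_{(-1)}\otimes[a_{(0)},b]+b_{(-1)}\otimes[a,b_{(0)}]$ and $\psi([a,b])=ab_{(0)}\otimes b_{(1)}-ba_{(0)}\otimes a_{(1)}$ (the paper's conditions (5)--(6)), while for mixed inputs $(x,b)\in H\times A$ they give precisely (HM1)--(HM2) (the paper's (2)--(3)) together with an $A\otimes A$-valued condition on $\Delta_A(x\trr b)-\Delta_A(b\trl x)$ (the paper's (4)). The second compatibility \eqref{eq:LB1} decomposes analogously: (BB2) is its $A\otimes A$-component for $A$-inputs, the mixed components give (HM3)--(HM4), and there are further module-coalgebra/comodule-algebra conditions (the paper's (10)--(12)); so it is also not true that nothing beyond the Hopf-bimodule conditions appears there. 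Note finally that in the paper's argument nothing ``cancels by virtue of (HM1)--(HM4)'': those conditions are themselves part of the derived list in the equivalence, and the conclusion is that the full list of component equations is precisely the statement that $A$ is a braided left-symmetric bialgebra in ${}^{H}_{H}\mathcal{M}{}^{H}_{H}$. Once you correct the grading, your componentwise plan coincides with the paper's proof.
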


\begin{proof}
First, we need to verify whether the product is  left-symmetric. For $\forall a, b, c\in A$, and $\forall x, y, z\in H$, we will check that
$$((a, x) (b, y)) (c, z)-(a, x)((b, y) (c, z))=((b, y)(a, x)) (c, z)-(b, y)( (a, x)(c, z)).$$
By definition, the left hand side is equal to
$$\begin{aligned}
&\big((a, x) (b, y)\big) (c, z)-(a, x)\big((b, y)(c, z)\big)\\
=&\big(a b+x \trr b+a \trl y, x y\big) (c, z)-(a, x)(b c+y\trr c+b \trl z, y z)\\
=&\big((a b) c+(x \trr b) c+(a \trl y) c+(x y)\trr c+(ab)\trl z+(x \trr b) \trl z+(a \trl y) \trl z, (x y) z\big)\\
&-\big((a(b c)+a(y\trr c)+a(b \trl z)+x \trr (b c)+x \trr (y\trr c)+x\trr (b \trl z)+a \trl(y z), x(y z)\big),
\end{aligned}
$$
and the right hand side is equal to
$$\begin{aligned}
&\big((b, y) (a, x)\big) (c, z)-(b, y)\big((a, x)(c, z)\big)\\
=&\big(b a+y \trr a+b \trl x, y x\big) (c, z)-(b, y)(a c+x\trr c+a \trl z, x z)\\
=&\big((b a) c+(y \trr a) c+(b \trl x) c+(y x)\trr c+(ba)\trl z+(y \trr a) \trl z+(b \trl x) \trl z, \, (y x) z\big)\\
&-\big((b(a c)+b(x\trr c)+b(a \trl z)+y \trr (a c)+y \trr (x\trr c)+y\trr (a \trl z)+b \trl(x z), \, y(x z)\big).
\end{aligned}
$$
Thus the two sides are equal to each other if and only if $(A, \trr, \trl)$ is a bimodule left-symmetric algebra over $H$.

Next, we need to verify  that the coproduct is left-symmetric . For all $(a, x)\in A \oplus H$. we have to prove
$$(\Delta_E \otimes \id) \Delta_E(a, x)-(\id \otimes \Delta_E) \Delta_E(a, x)=\tau_{12}((\Delta_E \otimes \id) \Delta_E(a, x)-(\id \otimes \Delta_E) \Delta_E(a, x)).$$
 By definition, the left hand side is equal to
$$
\begin{aligned}
&(\Delta_E \otimes \id) \Delta_E(a, x)-(\id \otimes \Delta_E) \Delta_E(a, x)\\
=&(\Delta_E\otimes \id)\left(a_{1} \otimes a_{2}+a_{(-1)} \otimes a_{(0)}+a_{(0)} \otimes a_{(1)}+x_{1} \otimes x_{2}\right)\\
&-(\id \otimes \Delta_E)\left(a_{1} \otimes a_{2}+a_{(-1)} \otimes a_{(0)}+a_{(0)} \otimes a_{(1)}+x_{1} \otimes x_{2}\right)\\
=&\Delta_{A}\left(a_{1}\right) \otimes a_{2}+\phi\left(a_{1}\right) \otimes a_{2}+\psi\left(a_{1}\right) \otimes a_{2}
+\Delta_{H}\left(a_{(-1)}\right) \otimes a_{(0)}\\
&+\Delta_{A}\left(a_{(0)}\right) \otimes a_{(1)}+\phi\left(a_{(0)}\right) \otimes a_{(1)}
+\psi\left(a_{(0)}\right) \otimes a_{(1)}+\Delta_{H}\left(x_{1}\right) \otimes x_{2}\\
&-a_{1} \otimes \Delta_{A}\left(a_{2}\right)-a_{1} \otimes \phi\left(a_{2}\right)-a_{1} \otimes \psi\left(a_{2}\right)-a_{(-1)} \otimes \Delta_{A}\left(a_{(0)}\right)\\
&-a_{(-1)} \otimes \phi\left(a_{(0)}\right)-a_{(-1)} \otimes \psi\left(a_{(0)}\right)-a_{(0)} \otimes \Delta_{H}\left(a_{(1)}\right)-x_{1} \otimes \Delta_{H}\left(x_{2}\right).
\end{aligned}
$$
The right hand side can be computed similarly.
Thus the two sides are equal to each other if and only if $(A, \phi, \psi)$ is a bicomodule left-symmetric coalgebra over $H$.

 Finally, we show  the first compatibility condition:
 $$\begin{aligned}
&\Delta_E([(a, x), (b, y)])\\
=&\Delta_E(a, x) \cdot(b, y)+(a, x) \cdot \Delta_E(b, y)+(a, x) \bullet \Delta_E(b, y)\\
&-\Delta_E(b, y) \cdot(a, x)-(b, y)\cdot \Delta_E(a, x)- (b, y)\bullet \Delta_E(a, x).
\end{aligned}
$$
By direct computations, the left hand side is equal to
 \begin{eqnarray*}
&&\Delta_E([(a, x), (b, y)])\\
&=&\Delta_E(a b+x \trr b+a \trl y,  x y)-\Delta_E(ba+y \trr a+b \trl x,  y x)\\
&=&\Delta_A(a b)+\phi(a b)+\psi(a b)+\Delta_A(x \trr b)+\phi(x \trr b)+\psi(x \trr b)+\Delta_A(a \trl y)\\
&&+\phi(a \trl y)+\psi(a \trl y)+\Delta_{H}(x y)-\Delta_A(b a)-\phi(b a)-\psi(b a)-\Delta_A(y \trr a)\\
&&-\phi(y \trr a)-\psi(y \trr a)-\Delta_A(b \trl x)-\phi(b \trl x)-\psi(b \trl x)-\Delta_{H}(y x),
\end{eqnarray*}
and the right hand side is equal to
\begin{eqnarray*}
&&\Delta_E(a, x) \cdot(b, y)+(a, x) \cdot \Delta_E(b, y)+(a, x) \bullet \Delta_E(b, y)\\
&&-\Delta_E(b, y) \cdot(a, x)-(b, y)\cdot \Delta_E(a, x)- (b, y)\bullet \Delta_E(a, x)\\
&=&\left(a_{1} \otimes a_{2}+a_{(-1)} \otimes a_{(0)}+a_{(0)} \otimes a_{(1)}+x_{1} \otimes x_{2}\right) \cdot(b, y)\\
&&+(a, x) \cdot\left(b_{1} \otimes b_{2}+b_{(-1)} \otimes b_{(0)}+b_{(0)} \otimes b_{(1)}+y_{1} \otimes y_{2}\right)\\
&&+(a, x) \bullet\left(b_{1} \otimes b_{2}+b_{(-1)} \otimes b_{(0)}+b_{(0)} \otimes b_{(1)}+y_{1} \otimes y_{2}\right)\\
&&-\left(b_{1} \otimes b_{2}+b_{(-1)} \otimes b_{(0)}+b_{(0)} \otimes b_{(1)}+y_{1} \otimes y_{2}\right)\cdot(a, x)\\
&&-(b, y)\cdot\left(a_{1} \otimes a_{2}+a_{(-1)} \otimes a_{(0)}+a_{(0)} \otimes a_{(1)}+x_{1} \otimes x_{2}\right)\\
&&-(b, y)\bullet\left(a_{1} \otimes a_{2}+a_{(-1)} \otimes a_{(0)}+a_{(0)} \otimes a_{(1)}+x_{1} \otimes x_{2}\right)\\
&=&a_{1} \otimes\left(a_{2} b+a_{2} \trl y\right)+a_{(-1)} \otimes\left(a_{(0)} b+a_{(0)} \trl y\right)+a_{(0)} \otimes\left(a_{(1)} \trr b+a_{(1)} y\right) \\
&&+x_{1} \otimes\left(x_{2} \trr b+x_{2} y\right) +\left(a b_{1}+x \trr b_{1}\right) \otimes b_{2}+(a \trl b_{(-1)}+x b_{(-1)} ) \otimes b_{(0)}\\
&&+\left(a b_{(0)}+x\trr b_{(0)}\right) \otimes b_{(1)}+\left(a \trl y_{1}+x y_{1}\right) \otimes y_{2} + b_{1}\otimes\left(a b_{2}+x \trr b_{2}\right)\\
&&+ b_{(-1)} \otimes (ab_{(0)}+x\trr b_{(0)})+b_{(0)}\ot (xb_{(1)}+a\trl b_{(1)} )+y_{1} \otimes \left(a \trl y_{2}+x y_{2}\right)\\
&&-b_{1} \otimes\left(b_{2} a+b_{2} \trl x\right)-b_{(-1)} \otimes\left(b_{(0)} a+b_{(0)} \trl x\right)-b_{(0)} \otimes\left(b_{(1)} \trr a+b_{(1)} x\right)\\
&&-y_{1} \otimes\left(y_{2} \trr a+y_{2} x\right)-\left(b a_{1}+y \trr a_{1}\right) \otimes a_{2}-(b \trl a_{(-1)}+y a_{(-1)}) \otimes a_{(0)}\\
&&-\left(b a_{(0)}+y\trr a_{(0)}\right) \otimes a_{(1)}-\left(b \trl x_{1} +y x_{1}\right) \otimes x_{2}- a_{1}\otimes\left(b a_{2}+y \trr a_{2}\right)\\
&&- a_{(-1)} \otimes (ba_{(0)}+y\trr a_{(0)})-a_{(0)}\ot (ya_{(1)}+b\trl a_{(1)})  -x_{1} \otimes \left(b \trl x_{2}+y x_{2}\right).
\end{eqnarray*}
Then the two sides are equal to each other if and only if

(1)$\Delta([a, b])=a b\li\ot b\lii+b\li\ot [a,b\lii]-b a\li\ot a\lii -a\li \ot [b,a\lii]$

 $ ~~~+a_{(0)} \otimes\left(a_{(1)} \trr b\right)+(a\trl b_{(-1)}) \otimes b_{(0)}+b_{(0)} \otimes (a\trl b_{(1)})$

$ ~~~-b_{(0)} \otimes\left(b_{(1)} \trr a\right)-(b\trl a_{(-1)}) \otimes a_{(0)}-a_{(0)} \otimes (b\trl a_{(1)})$,

(2) $\phi(x \trr b)-\phi(b \trl x)=x_{1} \otimes\left(x_{2}\trr b\right)+x b_{(-1)}\otimes b_{(0)}$\\
$~~~~~~~~~+b_{(-1)}\ot (x\trr b_{(0)})-b_{(-1)}\ot (b_{(0)}\trl x)-x_{1} \otimes(b\trl x_{2})$,

(3) $\psi(x \trr b)-\psi(b\trl x)=\left(x \trr  b_{(0)}\right) \otimes b_{(1)}+b_{(0)}\otimes [x,b_{(1)}]-(b\trl x_{1})\ot x_{2}$,

(4) $\Delta_{A}(x\trr b)-\Delta_{A}(b\trl x)=\left(x \trr b_{1}\right) \otimes b_{2}- b_{1}\ot  (b_{2}\trl x)+ b_{1}\ot (x\trr  b_{2})$,

(5) $\phi([a ,b]) =a_{(-1)} \otimes\ [a_{(0)}, b]+b_{(-1)}\ot [a,b_{(0)}]$,

 (6)$\psi([a, b]) =a b_{(0)} \otimes b_{(1)}-ba_{(0)}\ot a_{(1)}$.

We also need to show the second compatibility condition:
 $$(\id -\tau)(\Delta_{E}((a, x)(b, y)))=(\id-\tau)(\Delta_E(a, x) \cdot(b, y)+(a, x) \cdot \Delta_E(b, y)+(a, x) \bullet \Delta_E(b, y)).$$
The left hand side is equal to
$$\begin{aligned}
&(\id -\tau)(\Delta_{E}((a, x)(b, y)))\\
=&(\id -\tau)\Delta_E(a b+x \trr b+a \trl y,  x y)\\
=&\Delta_A(a b)+\phi(a b)+\psi(a b)+\Delta_A(x \trr b)+\phi(x \trr b)+\psi(x \trr b)+\Delta_A(a \trl y)\\
&+\phi(a \trl y)+\psi(a \trl y)+\Delta_{H}(x y)-\tau\Big( \Delta_A(a b)+\phi(a b)+\psi(a b)+\Delta_A(x \trr b)\\
&+\phi(x \trr b)+\psi(x \trr b)+\Delta_A(a \trl y)+\phi(a \trl y)+\psi(a \trl y)+\Delta_{H}(x y)  \Big),
\end{aligned}
$$
and the right hand side is equal to
\begin{eqnarray*}
&&(\id -\tau)\Delta_E(a, x) \cdot(b, y)+(a, x) \cdot \Delta_E(b, y)+(a, x) \bullet \Delta_E(b, y)\\
&=&(\id -\tau)\Big(\left(a_{1} \otimes a_{2}+a_{(-1)} \otimes a_{(0)}+a_{(0)} \otimes a_{(1)}+x_{1} \otimes x_{2}\right) \cdot(b, y)\\
&&+(a, x) \cdot\left(b_{1} \otimes b_{2}+b_{(-1)} \otimes b_{(0)}+b_{(0)} \otimes b_{(1)}+y_{1} \otimes y_{2}\right)\\
&&+(a, x) \bullet\left(b_{1} \otimes b_{2}+b_{(-1)} \otimes b_{(0)}+b_{(0)} \otimes b_{(1)}+y_{1} \otimes y_{2}\right)\Big)\\
&=&(\id-\tau)\Big(a_{1} \otimes\left(a_{2} b+a_{2} \trl y\right)+a_{(-1)} \otimes\left(a_{(0)} b+a_{(0)} \trl y\right)+a_{(0)} \otimes\left(a_{(1)} \trr b+a_{(1)} y\right) \\
&&+x_{1} \otimes\left(x_{2} \trr b+x_{2} y\right) +\left(a b_{1}+x \trr b_{1}\right) \otimes b_{2}+(a \trl b_{(-1)}+x b_{(-1)} ) \otimes b_{(0)}\\
&&+\left(a b_{(0)}+x\trr b_{(0)}\right) \otimes b_{(1)}+\left(a \trl y_{1}+x y_{1}\right) \otimes y_{2} + b_{1}\otimes\left(a b_{2}+x \trr b_{2}\right)\\
&&+ b_{(-1)} \otimes (ab_{(0)}+x\trr b_{(0)})+b_{(0)}\ot (xb_{(1)}+a\trl b_{(1)} )+y_{1} \otimes \left(a \trl y_{2}+x y_{2}\right)\Big).
\end{eqnarray*}
Thus the two sides are equal to each other if and only if satisfying the following conditions

 (7) $\Delta_{A}(ab)-\tau\Delta_{A}(ab) $\\
$~~~~~~~~~~~=(\id-\tau)\Big(a_{1} \otimes a_{2} b+a b_{1} \otimes b_{2}+ b_{1} \otimes a b_{2}+a_{(0)} \otimes\left(a_{(1)} \trr b\right)+(a\trl b_{(-1)}) \otimes b_{(0)}+b_{(0)} \otimes (a\trl b_{(1)})\Big)$,

(8) $\phi(x \trr b)-\tau\psi(x\trr b)$\\
$~~~~~~~~~=x_{1} \otimes\left(x_{2}\trr b\right)+ x b_{(-1)}  \otimes b_{(0)}+b_{(-1)}\ot (x\trr b_{(0)})-b_{(1)}\ot (x\trr b_{(0)})-xb_{(1)} \ot b_{(0)}$,

(9) $\phi(a \trl y)-\tau\psi(a \trl y ) =a_{(-1)} \otimes\left(a_{(0)} \trl y\right)+y_{1}\ot (a\trl y_{2})-a_{(1)}y\ot a_{(0)}- y_{2}\ot (a \trl y_{1})$,

(10) $\phi(a b)-\tau\psi(ab)= b_{(-1)}\ot  a b_{(0)}  +a_{(-1)} \otimes\   a_{(0)} b-b_{(1)}\ot a b_{(0)} $,

(11) $\Delta_{A}(x\trr b)-\tau\Delta_{A}(x\trr b)=(\id-\tau)\Big( \left(x \trr b_{1}\right) \otimes b_{2} + b_{1}\ot (x\trr  b_{2})\Big)$,

(12) $\Delta_{A}(a \trl y)-\tau\Delta_{A}(a \trl y)=a_{1} \otimes\left(a_{2} \trl y\right) -\left(a_{2} \trl y\right)\ot a_{1}$.

\noindent From (4)--(6) and (10)--(12) we have that $A$ is a left-symmetric algebra and  left-symmetric coalgebra  in ${}^{H}_{H}\mathcal{M}{}^{H}_{H}$,
from (2)--(3) and (8)--(9) we get that $A$ is a  left-symmetric Hopf bimodule over $H$,  and (1) together with (7) are the conditions for $A$ to be a braided  left-symmetric bialgebra.

The proof is completed.
\end{proof}
\subsection{From quasitriangular left-symmetric bialgebra to braided left-symmetric bialgebra}
Let $(A, \cdot)$ be a left-symmetric algebra and
$\displaystyle  {r}=\sum_i{u_i\otimes v_i}\in A\otimes A$. Set
\begin{equation}
 {r}_{12}=\sum_iu_i\otimes v_i\otimes 1, \quad
 {r}_{13}=\sum_{i}u_i\otimes 1\otimes v_i, \quad r_{23}=\sum_i 1\otimes u_i\otimes v_i,
\end{equation}
In this section, we consider a special class of left-symmetric bialgebras.
That is, the left-symmetric bialgebra $(A, \Delta_r)$ on a left-symmetric algebra $(A, \cdot)$,  with the linear map $\Delta_r$ defined by
\begin{eqnarray}\label{2341}
\Delta_r(a)&=&\sum_i au_i\otimes v_i+u_{i}\otimes  [a, v_{i}].
\end{eqnarray}

\begin{theorem} \cite{Bai08}
Let $A$ be a left-symmetric algebra and $r\in A\ot A$. Suppose $r$ is symmetric, that is $r=\tau (r)$. Then $\Delta_{r}$ defined by \eqref{2341} induces a left-symmetric algebra on $A^{*}$ such that $A$ is a left-symmetric bialgebra if it satisfying the  following $S$-equation:
\begin{eqnarray}
[[r, r]]=r_{12}r_{23}-r_{12}r_{13}+[r_{13}, r_{23}]=0.
\end{eqnarray}
\end{theorem}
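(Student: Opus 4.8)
The plan is to substitute the explicit coproduct $\Delta_r$ of \eqref{2341} into the three nontrivial axioms of a left-symmetric bialgebra (Definition \ref{dfnlb}) and to isolate exactly where the $S$-equation is needed. Write $\rho(a)=L_a\ot\id+\id\ot\ad_a$ for the action of $a\in\mathfrak g(A)$ on $A\ot A$, with $L_a$ left multiplication; then \eqref{2341} says precisely $\Delta_r(a)=\rho(a)(r)$, so $\Delta_r$ is the coboundary of the element $r$. The left-symmetric identity is nothing but $[L_a,L_b]=L_{[a,b]}$, and $\ad$ is a representation of $\mathfrak g(A)$ by the Jacobi identity, so $\rho$ is a representation of $\mathfrak g(A)$ on $A\ot A$. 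Consequently
\[\Delta_r([a,b])=\rho([a,b])(r)=\rho(a)\rho(b)(r)-\rho(b)\rho(a)(r)=\rho(a)\Delta_r(b)-\rho(b)\Delta_r(a),\]
which is exactly \eqref{eq:LB0}. Thus the first compatibility condition holds automatically, using neither the symmetry of $r$ nor the $S$-equation.

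Next I would dispose of the second compatibility condition \eqref{eq:LB1}, and it turns out this too avoids the $S$-equation. The key preliminary, valid because $r=\tau(r)$, is the identity $(\id-\tau)\Delta_r(a)=(R_a\ot\id-\id\ot R_a)(r)$, where $R_a$ is right multiplication, $R_a(b)=ba$; it follows by expanding $\Delta_r$, swapping the legs of $r$ by symmetry, and using $[a,u_i]=au_i-u_ia$. In particular the left-hand side of \eqref{eq:LB1} equals $\sum_i u_i(ab)\ot v_i-u_i\ot v_i(ab)$, which is linear in $r$; and the right-hand side $(\id-\tau)\bigl(a\li\ot a\lii b+ab\li\ot b\lii+b\li\ot ab\lii\bigr)$ is likewise linear in $r$. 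Expanding the right-hand side and simplifying every $\tau$-image by the symmetry of $r$, the six ``mixed'' terms (those in which one leg of $r$ is multiplied on both its sides) cancel in pairs, while the left-symmetric associator relation $(ca)b-c(ab)=(ac)b-a(cb)$ collapses the remaining terms to $\sum_i u_i(ab)\ot v_i-u_i\ot v_i(ab)$. Hence \eqref{eq:LB1} holds from symmetry and left-symmetry alone.

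The only place the $S$-equation enters is in verifying that $(A,\Delta_r)$ is a left-symmetric coalgebra, equivalently that $\Delta_r^{*}$ is a left-symmetric product on $A^{*}$. Here I would compute the co-left-symmetric defect $D(a):=(\Delta_r\ot\id)\Delta_r(a)-(\id\ot\Delta_r)\Delta_r(a)$ by substituting \eqref{2341} twice; this is quadratic in $r$, matching the quadratic nature of $[[r,r]]$. Grouping the triple products and iterated brackets and rewriting the associators by the left-symmetric identity, the part of $D(a)$ not fixed by $\tau_{12}$ should assemble into the image of $[[r,r]]=r_{12}r_{23}-r_{12}r_{13}+[r_{13},r_{23}]$ under the threefold action of $\rho(a)$, exactly as the co-Jacobi defect of $\delta_r$ equals $\ad^{\ot 3}[[r,r]]$ in the Lie bialgebra case. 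Since $[[r,r]]=0$, one gets $(\id-\tau_{12})D(a)=0$, so $(A,\Delta_r)$ is a left-symmetric coalgebra, and combining this with \eqref{eq:LB0} and \eqref{eq:LB1} shows $(A,\cdot,\Delta_r)$ is a left-symmetric bialgebra.

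The main obstacle is the bookkeeping in this last step: $D(a)$ is a sum of many terms quadratic in $r$, and the work is to regroup them so that they visibly reassemble into $r_{12}r_{23}-r_{12}r_{13}+[r_{13},r_{23}]$. Throughout, the symmetry hypothesis $r=\tau(r)$ is indispensable — it is what lets the $\tau$-images be reabsorbed and what fuses the separate obstructions into the single expression $[[r,r]]$; without it even the preliminary reduction $(\id-\tau)\Delta_r(a)=(R_a\ot\id-\id\ot R_a)(r)$ fails.
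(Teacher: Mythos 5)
This paper never proves the statement: it is imported verbatim from \cite{Bai08} with no argument attached, so there is no internal proof to compare yours against. The closest internal evidence is the pair of theorems that follow it in Section 3.2, whose proofs invoke the hypotheses essentially where your outline predicts: the $S$-equation is used only in the coassociativity-type (bicomodule) check, while the (BB1)/(BB2)-type checks use only the coboundary form of $\Delta_r$, the symmetry of $r$, and the left-symmetric identity. Judged on its own, your outline is the standard coboundary argument and is correct where it is explicit: $\rho(a)=L_a\ot\id+\id\ot\ad_a$ is a representation of $\mathfrak{g}(A)$ because $L_{[a,b]}=[L_a,L_b]$ is a restatement of the left-symmetric identity, so the coboundary $\Delta_r$ satisfies \eqref{eq:LB0} with no hypotheses on $r$; and your identity $(\id-\tau)\Delta_r(a)=(R_a\ot\id-\id\ot R_a)(r)$ for symmetric $r$ is correct and does reduce \eqref{eq:LB1} to the left-symmetric identity (I checked that the expansion closes, though the cancellation pattern differs in detail from your description: one pair cancels outright, the pair $\sum_i au_i\ot bv_i+bu_i\ot av_i$ is $\tau$-invariant by symmetry of $r$, and the associator identity collapses the remaining five terms to $\sum_i u_i(ab)\ot v_i-u_i\ot v_i(ab)$).

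The one inaccuracy is the predicted shape of the identity in your third step. The defect $D(a)$ does not assemble into the image of $[[r,r]]$ under a single ``threefold action of $\rho(a)$''; the $S$-equation is needed twice, under two unrelated operators. Concretely, the block $\sum_{i,j}\bigl(au_i\ot v_iu_j\ot v_j+au_i\ot u_j\ot[v_i,v_j]\bigr)$ coming from $\sum_i au_i\ot\Delta_r(v_i)$ equals $(L_a\ot\id\ot\id)\bigl(r_{12}r_{23}+[r_{13},r_{23}]\bigr)$, which the $S$-equation converts to $(L_a\ot\id\ot\id)(r_{12}r_{13})$; and the block $\sum_{i,j}u_iu_j\ot v_j\ot[a,v_i]=(\id\ot\id\ot\ad_a)(\id\ot\tau)(r_{12}r_{13})$ coming from $\sum_i\Delta_r(u_i)\ot[a,v_i]$ must be rewritten in the opposite direction. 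After these two substitutions the remainder is $\tau_{12}$-invariant, but establishing that invariance is itself nontrivial: it requires the symmetry of $r$ together with three further applications of the left-symmetric identity, one for each tensor position in which $a$ sits. I carried this bookkeeping through and it does close, so this is a mistaken prediction rather than a fatal gap: since your argument only uses that $[[r,r]]$ vanishes, and not the precise operator hitting it, the plan proves the theorem once the grouping is corrected as above.
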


This kind of left-symmetric bialgebra is called a quasitriangular  left-symmetric bialgebra.

\begin{theorem}
 Let $(A, \cdot, \Delta_r)$ be a quasitriangular left-symmetric bialgebra and $M$ an $A$-bimodule. Then $M$ becomes a  left-symmetric Hopf bimodule  over $A$ with maps $\phi:  M \rightarrow A \otimes M$ and $\psi: M \rightarrow M \otimes A$ given by
\begin{equation}
\phi(m):=\sum_{i}u_{i}\ot m\trl v_{i}-u_{i} \otimes  v_{i} \trr m,\quad \psi(m):=\sum_{i}m\trl u_{i} \otimes v_{i}
\end{equation}
\end{theorem}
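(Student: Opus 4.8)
The plan is to verify directly that the quadruple $(M,\trr,\trl,\phi,\psi)$ satisfies every defining condition of a left-symmetric Hopf bimodule over $H=A$. Since $M$ is an $A$-bimodule by hypothesis, the bimodule requirement is free, so the work splits into two families: the two bicomodule axioms for $(\phi,\psi)$, and the four compatibility conditions (HM1)--(HM4). Throughout I would use two facts constantly. First, the symmetry $r=\tau(r)$, i.e. $\sum_i u_i\ot v_i=\sum_i v_i\ot u_i$, which lets me rewrite $\phi(m)=\sum_i u_i\ot(m\trl v_i-v_i\trr m)$ and $\tau\psi(m)=\sum_i u_i\ot(m\trl v_i)$ with all left legs sitting in a common $u_i$-slot. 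Second, the explicit coproduct $\Delta_r(a)=\sum_i au_i\ot v_i+u_i\ot[a,v_i]$, which is the left-symmetric coalgebra structure of $A$ entering each condition. It is worth recording at the outset that for the regular bimodule $M=A$ one has $\phi+\psi=\Delta_r$, so $\phi$ and $\psi$ are precisely the two halves of $\Delta_r$; this explains structurally why the relevant identities are governed by the same data as Bai's theorem.

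I would treat (HM1)--(HM4) first, as they are the lighter half. The key observation is that each of these conditions carries only a single occurrence of $r$ (every term involves exactly one of $\phi$, $\psi$, or $\Delta_r$), so no product $r\cdot r$ ever appears and the $S$-equation is not needed. After substituting the definitions and using symmetry to collect all terms into a common $\sum_i u_i\ot(\cdots)$ form, each condition collapses to one of the two left-symmetric bimodule axioms for $M$. For instance, a short computation shows that (HM4) reduces term by term to the second bimodule axiom $(v\trl x)\trl y-v\trl(xy)=(x\trr v)\trl y-x\trr(v\trl y)$ with $x$ replaced by $v_i$ and $y$ by $x$, after the commutator terms coming from $\Delta_r$ cancel against the $\psi$-terms; (HM2) reduces to the same axiom, while (HM1) and (HM3) reduce similarly to the two bimodule axioms together with the symmetry of $r$.

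The genuine content lies in the two bicomodule axioms. Here applying $\Delta_r$ to a leg of $\phi(m)$ (or $\psi(m)$) produces a product $u_iu_j$ and a commutator $[u_i,v_j]$, so these conditions involve two independent copies $r=\sum_i u_i\ot v_i$ and $r=\sum_j u_j\ot v_j$. The strategy is to expand both sides, push all the $A$-multiplications and iterated actions into a normal form using the bimodule axioms, and then recognize the resulting terms in $A\ot A\ot M$ as the images, under the module action on the last slot, of the three pieces $r_{12}r_{23}$, $r_{12}r_{13}$ and $[r_{13},r_{23}]$ of $[[r,r]]$. The $S$-equation $[[r,r]]=r_{12}r_{23}-r_{12}r_{13}+[r_{13},r_{23}]=0$ then makes the obstruction vanish and yields exactly the required $\tau_{12}$-symmetry for the first axiom and the mixed $\phi$--$\psi$ identity for the second. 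This is the module-level analogue of the computation proving that $\Delta_r$ is a left-symmetric coalgebra in Bai's theorem.

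The main obstacle will be the second bicomodule axiom, the one mixing $\phi$ and $\psi$, together with the bookkeeping it demands: one must track which of the two copies of $r$ occupies each tensor slot and match roughly a dozen resulting terms precisely against the three summands of $[[r,r]]$. I expect the symmetry $r=\tau(r)$ to be indispensable here, since it is what allows the $r_{12}r_{13}$ and $[r_{13},r_{23}]$ contributions to be rewritten so that both bicomodule obstructions are resolved by the single scalar identity $[[r,r]]=0$; without symmetry the terms would not collapse to the $S$-equation. Once these identities are established, assembling them with the already-verified (HM1)--(HM4) shows that $(M,\trr,\trl,\phi,\psi)$ is a left-symmetric Hopf bimodule over $A$.
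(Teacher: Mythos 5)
Your proposal is correct and follows essentially the same route as the paper's own proof: the paper verifies the two bicomodule axioms by direct expansion, observing that they hold precisely because $r$ is symmetric, $[[r,r]]=0$ (the $S$-equation entering exactly where two copies of $r$ collide), and $M$ is an $A$-bimodule, and then checks (HM1)--(HM4) term by term using only the bimodule axioms and $r=\tau(r)$, just as you predict. Your observation that the single-$r$ conditions (HM1)--(HM4) cannot require the $S$-equation, and that $\phi+\psi=\Delta_r$ when $M=A$, are accurate refinements but do not change the argument.
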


\begin{proof}
We firstly prove that $M$ is a  bicomodule by:
\begin{eqnarray*}
 &&\left(\Delta_{r} \otimes \id \right)\phi(m)-\left(\id  \otimes \phi\right) \phi(m)=\tau_{12}(\left(\Delta_{r} \otimes \id  \right)\phi(m)-\left(\id   \otimes \phi\right) \phi(m)),\\
 &&\left( \psi\otimes \id   \right) \psi(m)-\left(\id \ot\Delta_{r} \right)\psi(m)=\tau_{12}((\phi\ot id )\psi(m)-(\id  \ot \psi)\phi(m)).
\end{eqnarray*}
For the firt equation ,we have the left hand side equal to
\begin{eqnarray*}
&&\left(\Delta_{r} \otimes \id\right)\phi(m)-\left(\id \otimes \phi\right) \phi(m)\\
&=&(\Delta_{r} \otimes \id)(u_{i}\ot m\trl v_{i}-u_{i} \otimes  v_{i} \trr m)-(\id \otimes \phi)(u_{i}\ot m\trl v_{i}-u_{i} \otimes  v_{i} \trr m)\\
&=&-u_{i}u_{j}\ot v_{j}\ot v_{i}\trr m-u_{j}\ot [u_{i}, v_{j}]\ot v_{i}\trr m+u_{i}u_{j}\ot v_{j}\ot m \trl v_{i}\\
&&+u_{j}\ot [u_{i}, v_{j}]\ot m\trl v_{i}-u_{i}\ot u_{j}\ot v_{j}\trr(v_{i}\trr m)+u_{i}\ot u_{j}\ot(v_{i}\trr m)\trl v_{j}\\
&&+u_{i}\ot u_{j}\ot v_{j}\trr (m\trl v_{i})-u_{i}\ot u_{j}\ot (m \trl v_{i})\trl v_{j}\\
&=&-u_{i}u_{j}\ot v_{j}\ot v_{i}\trr m-u_{j}\ot u_{i}v_{j}\ot v_{i}\trr m+u_{i}u_{j}\ot v_{i}\ot v_{j}\trr m\\
&&-u_{i}\ot u_{j}\ot [v_{i}, v_{j}]\trr m+u_{i}u_{j}\ot v_{j}\ot m \trl v_{i}+u_{j}\ot u_{i}v_{j}\ot m\trl v_{i}\\
&&-u_{i}u_{j}\ot v_{i}\ot (m\trl v_{j})+u_{i}\ot u_{j}\ot m\trl [v_{i}, v_{j}]-u_{i}\ot u_{j}\ot v_{j}\trr(v_{i}\trr m)\\
&&+u_{i}\ot u_{j}\ot(v_{i}\trr m)\trl v_{j}+u_{i}\ot u_{j}\ot v_{j}\trr (m\trl v_{i})-u_{i}\it u_{j}\ot (m \trl v_{i})\trl v_{j},
\end{eqnarray*}
and the right hand side equal to
\begin{eqnarray*}
&&\tau_{12}\Big((\Delta_{r} \otimes \id)\phi(v)- (\id \otimes \phi)\phi(v)\Big)\\
&=&\tau_{12}\Big( -u_{i}u_{j}\ot v_{j}\ot v_{i}\trr m-u_{j}\ot [u_{i}, v_{j}]\ot v_{i}\trr m+u_{i}u_{j}\ot v_{j}\ot m \trl v_{i}\\
&&+u_{j}\ot [u_{i}, v_{j}]\ot m\trl v_{i}-u_{i}\ot u_{j}\ot v_{j}\trr(v_{i}\trr m)+u_{i}\ot u_{j}\ot(v_{i}\trr m)\trl v_{j}\\
&&+u_{i}\ot u_{j}\ot v_{j}\trr (m\trl v_{i})-u_{i}\it u_{j}\ot (m \trl v_{i})\trl v_{j} \Big)\\
&=&-v_{j}\ot u_{i}u_{j}\ot v_{i}\trr m-[u_{i}, v_{j}]\ot u_{j}\ot v_{i}\trr m+v_{j}\ot u_{i}u_{j}\ot m \trl v_{i}\\
&&-[u_{i}, v_{j}]\ot u_{j}\ot m\trl v_{i}-u_{i}\ot u_{j}\ot v_{i}\trr(v_{j}\trr m)+u_{i}\ot u_{j}\ot(v_{j}\trr m)\trl v_{i}\\
&&+u_{i}\ot u_{j}\ot v_{i}\trr (m\trl v_{j})-u_{i}\it u_{j}\ot (m \trl v_{j})\trl v_{i}.
\end{eqnarray*}
Thus the two sides are equal to each other if and only if  $r$ is symmetric, $[[r, r]]=0$ and $M$ is an $A$-bimodule.

For the second equation, we have the left hand side equal to the right hand side:
\begin{eqnarray*}
&&\left( \psi\otimes \id   \right) \psi(m)-\left(\id \ot\Delta_{r} \right)\psi(m)\\
&=&(\psi\otimes \id )(m\trl u_{i}\ot v_{i})-(\id \ot\Delta_{r} )(m\trl u_{i}\ot v_{i})\\
&=&(m\trl u_{i})\trl u_{j}\ot v_{j}\ot v_{i}-m\trl u_{i}\ot v_{i}u_{j}\ot v_{j}-m\trl u_{i}\ot u_{j}\ot [v_{i}, v_{j}]\\
&=&(m\trl u_{i})\trl u_{j}\ot v_{j}\ot v_{i}-m\trl (u_{i}u_{j})\ot v_{i}\ot v_{j}\\
&=&(m\trl u_{i})\trl u_{j}\ot v_{j}\ot v_{i}-(m\trl u_{i})\trl u_{j}\ot v_{i}\ot v_{j}+(u_{i}\trr m)\trl u_{j}\ot v_{i}\ot v_{j}\\
&&-u_{i}\trr (m\trl u_{j})\ot v_{i}\ot v_{j}\\
&=&(m\trl u_{i})\trl v_{j}\ot u_{j}\ot v_{i}-v_{j}\trr (m\trl u_{i})\ot u_{j}\ot v_{i}-(m\trl v_{i})\trl u_{j}\ot u_{i}\ot v_{j}\\
&&+(v_{i}\trr m)\trl u_{j}\ot u_{i}\ot v_{j}\\
&=&\tau_{12}(\phi(m\trl u_{i})\ot v_{i}-u_{i}\ot \psi(m\trl v_{i})+u_{i}\ot \psi(v_{i}\trr m))\\
&=&\tau_{12}((\phi\ot \id)\psi(m)-(\id\ot \psi)\phi(m))
\end{eqnarray*}

Next, we prove that $M$ is a left-symmetric Hopf bimodule over $A$.
For $(HM1)$, we have
\begin{eqnarray*}
&&v_{(-1)} \otimes\left(v_{(0)} \trl x\right)+x_{1}\ot (v\trl x_{2})-x_{1}\ot (x_{2}\trr v)-xv_{(-1)}\ot v_{(0)}-v_{(-1)}\ot (x \trr v_{(0)})\\
&=&u_{i}\ot (m\trl v_{i})\trl x+xu_{i}\ot m\trl v_{i}+u_{i}\ot m\trl (xv_{i})-u_{i}\ot m\trl (v_{i}x)\\
&&-u_{i}\ot(v_{i}\trr m)\trl x-xu_{i}\ot v_{i}\trr m-u_{i}\ot (xv_{i})\trr m+u_{i}\ot (v_{i}x)\trr m\\
&&-xu_{i}\ot m\trl v_{i}+xu_{i}\ot v_{i}\trr m-u_{i}\ot x\trr(m\trl v_{i})+u_{i}\ot x\trr(v_{i}\trr m)\\
&=&u_{i}\ot (m\trl v_{i})\trl x+u_{i}\ot m\trl (xv_{i})-u_{i}\ot m\trl (v_{i}x)-u_{i}\ot(v_{i}\trr m)\trl x\\
&&-u_{i}\ot (xv_{i})\trr m+u_{i}\ot (v_{i}x)\trr m-u_{i}\ot x\trr(m\trl v_{i})+u_{i}\ot x\trr(v_{i}\trr m)\\
&=&u_{i}\ot (m\trl x)\trl v_{i}-u_{i}\ot v_{i}\trr (m\trl x)-u_{i}\ot (x\trr m)\trl v_{i}\\
&&+u_{i}\ot v_{i}\trr (x\trr m)\\
&=&\phi(m\trl x)-\phi(x\trr m).
\end{eqnarray*}
For $(HM2)$, we have
\begin{eqnarray*}
&&\left(x \trr  m_{(0)}\right) \otimes m_{(1)}+m_{(0)}\otimes [x, m_{(1)}] -(m\trl x_{1})\ot x_{2}\\
&=&x\trr(m\trl u_{i})\ot v_{i}+m\trr u_{i}\ot [x, v_{i}]-m\trr (xu_{i})\ot v_{i}-m\trr u_{i}\ot [x, v_{i}]\\
&=&x\trr(m\trl u_{i})\ot v_{i} -m\trr (xu_{i})\ot v_{i} \\
&=&(x\trr m)\trl u_{i}\ot v_{i}-(m\trl x)\trl u_{i}\ot v_{i}\\
&=&\phi(x\trr m)-\psi(m\trl x).
\end{eqnarray*}
For For $(HM3)$, we have
\begin{eqnarray*}
&&x_{1} \otimes\left(x_{2}\trr m\right)+x m_{(-1)}\otimes m_{(0)}+m_{(-1)}\ot( x\trr m_{(0)})-m_{(1)}\ot (x\trr m_{(0)})-xm_{(1)} \ot m_{(0)}\\
&=&xu_{i}\ot v_{i}\trr m+u_{i}\ot (xv_{i})\trr m-u_{i}\ot (v_{i}x)\trr m+xu_{i}\ot m\trl v_{i}-xu_{i}\ot v_{i}\trr m\\
&&+u_{i}\ot x\trr (m\trl v_{i})-u_{i}\ot x\trr(v_{i}\trr m)-v_{i}\ot x\trr(m\trl u_{i})-xv_{i}\ot m\trl u_{i}\\
&=&u_{i}\ot (xv_{i})\trr m-u_{i}\ot (v_{i}x)\trr m+u_{i}\ot x\trr (m\trl v_{i})-u_{i}\ot x\trr(v_{i}\trr m)-v_{i}\ot x\trr(m\trl u_{i})\\
&=&u_{i}\ot (x\trr m)\trl v_{i}-u_{i}\ot v_{i}\trr(x\trr m)-v_{i}\ot (x\trr m)\trl u_{i}\\
&=&\phi(x\trr m)-\tau\psi(x\trr m).
\end{eqnarray*}
For For $(HM4)$, we have
\begin{eqnarray*}
&&m_{(-1)} \otimes\left(m_{(0)} \trl x\right)+x_{1}\ot (m\trl x_{2})-m_{(1)}x\ot m_{(0)}- x_{2}\ot (m \trl  x_{1})\\
&=&u_{i}\ot(m\trl v_{i})\trl x-u_{i}\ot(v_{i}\trr m)\trl x+xu_{i}\ot m\trl v_{i}+u_{i}\ot m\trl(xv_{i})-u_{i}\ot m\trl(v_{i}x)\\
&&-v_{i}x\ot m\trl u_{i}-v_{i}\ot m\trl(xu_{i})-xv_{i}\ot m\trl u_{i}+v_{i}x\ot m\trl u_{i}\\
&=&u_{i}\ot(m\trl v_{i})\trl x-u_{i}\ot(v_{i}\trr m)\trl x-u_{i}\ot m\trl(v_{i}x)\\
&=&-u_{i}\ot v_{i}\trr(m\trl x)\\
&=&u_{i}\ot(m\trl x)\trl v_{i} -u_{i}\ot v_{i}\trr(m\trl x)-v_{i}\ot(m\trl x)\trl u_{i}\\
&=&\phi(m\trl x)-\tau\psi(m\trl x).
\end{eqnarray*}
This completed the proof.
\end{proof}

\begin{theorem}
 Let $(A, \cdot, \Delta_r)$ be a quasitriangular  left-symmetric bialgebra. Then $A$ becomes a braided  left-symmetric bialgebra over itself with $M=A$ and $\phi: M \rightarrow A \otimes M$ and $\psi: M \rightarrow M \otimes A$ are given by
 \begin{equation}
\phi(a):=\sum_{i} u_{i} \otimes [a, v_{i}], \quad \psi(a):= \sum_{i} au_{i} \otimes v_{i},
\end{equation}
\end{theorem}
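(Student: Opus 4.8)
The plan is to derive this theorem as a specialization of the preceding theorem plus a cancellation argument. First I would note that when $M=A$ is taken to be the \emph{regular} bimodule, $x\trr a=xa$ and $a\trl x=ax$, the coactions manufactured by the previous theorem collapse to exactly the maps in the statement: $\phi(a)=\sum_i u_i\ot a\trl v_i-u_i\ot v_i\trr a=\sum_i u_i\ot(av_i-v_ia)=\sum_i u_i\ot[a,v_i]$ and $\psi(a)=\sum_i a\trl u_i\ot v_i=\sum_i au_i\ot v_i$. Consequently the previous theorem already delivers the full left-symmetric Hopf bimodule structure, i.e.\ conditions (HM1)--(HM4) (equivalently the items (2),(3),(8),(9) isolated in the proof of the biproduct theorem) hold for free. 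What remains is to check the ``algebra and coalgebra in ${}^A_A\mathcal{M}^A_A$'' axioms together with the braiding identities (BB1),(BB2).

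The decisive structural observation is that the braided coproduct on $A$ must be taken to be \emph{trivial}, $\Delta_A=0$; all the coalgebra data of $(A,\Delta_r)$ is carried by the coactions, since $\Delta_r=\psi+\phi$. With this choice every condition from the biproduct proof that carries a factor $\Delta_A$---namely the module/coalgebra compatibilities (4),(11),(12)---holds vacuously, both sides reducing to $0$. I would also record that $\Delta_A=\Delta_r$ is \emph{not} admissible: comparing (11) with the bialgebra axiom \eqref{eq:LB1} leaves the surplus term $(\id-\tau)(x_1\ot x_2b)$, generically nonzero, which confirms that the trivial coproduct is the correct and forced choice.

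Next I would verify the three genuinely nontrivial comodule/algebra compatibilities. Substituting the explicit $\phi,\psi$, condition (5) becomes $\sum_i u_i\ot[[a,b],v_i]=\sum_i u_i\ot\big([[a,v_i],b]+[a,[b,v_i]]\big)$, which is exactly the Jacobi identity of the Lie algebra $\mathfrak{g}(A)$; condition (6) reduces to $[a,b]u_i=a(bu_i)-b(au_i)$, i.e.\ the left-symmetric identity applied to $(a,b,u_i)$. Condition (10) is the only one needing symmetry of $r$: after using the left-symmetric identity to simplify $[ab,v_i]-a[b,v_i]-[a,v_i]b$ down to $(ba)v_i-b(av_i)$, one swaps $u_i\lra v_i$ in the remaining summand via $r=\tau(r)$ and closes the identity with one further application of left-symmetry.

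Finally I would settle the braiding identities themselves. For (BB1) the six coaction terms expand, using $x\trr b=xb$ and $b\trl x=bx$, to $\sum_i\big(au_i\ot v_ib+au_i\ot[b,v_i]+bu_i\ot av_i-bu_i\ot v_ia-bu_i\ot[a,v_i]-au_i\ot bv_i\big)$, and the commutator relations $v_ib+[b,v_i]-bv_i=0$ collapse this to zero; since $\Delta_A=0$ annihilates the surviving cocycle terms, (BB1) reads $0=0$. For (BB2) the remaining right-hand side is $(\id-\tau)\big(\sum_i au_i\ot bv_i+bu_i\ot av_i\big)$, and here $r=\tau(r)$ forces $\tau(\sum_i au_i\ot bv_i)=\sum_i bu_i\ot av_i$, so the antisymmetrization kills the expression. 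I expect the main obstacle to be essentially bookkeeping: keeping the two distinct Sweedler conventions for $\phi$ and $\psi$ straight while expanding (BB1),(BB2) and condition (10), and spotting the precise places where symmetry of $r$---rather than the $S$-equation, which was already consumed in establishing the Hopf bimodule structure---is the identity that makes everything cancel.
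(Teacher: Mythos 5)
Your proposal is correct, but it takes a genuinely different route from the paper, and the difference is substantive: the two arguments equip $M=A$ with \emph{different} braided coproducts. The paper does \emph{not} take the braided coproduct to be zero; it takes it to be $\Delta_r$ itself. (This is visible in its expansion of $ab_1\otimes b_2$ as $a(bu_i)\otimes v_i+au_i\otimes[b,v_i]$, and in the fact that its computation closes with ``$=\Delta_r([a,b])$'', which matches the left-hand side of (BB1) only under $\Delta=\Delta_r$.) With that choice the paper verifies (BB1) and (BB2) by one global expansion in which the six coaction terms cancel and the $\Delta_r$-terms recombine into $\Delta_r([a,b])$ via Jacobi and left-symmetry; it checks nothing beyond (BB1)--(BB2). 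You instead put all coalgebra data into the coactions ($\Delta_r=\phi+\psi$, braided coproduct zero), so that (BB1)--(BB2) reduce to exactly those coaction cancellations, the module--coalgebra compatibilities (4), (11), (12) become vacuous, and the comodule--algebra compatibilities (5), (6), (10) are verified by Jacobi, left-symmetry, and left-symmetry plus $r=\tau(r)$, all of which you identify correctly. Moreover, your side remark about condition (11) is a genuine and correct criticism of the paper's own choice: with $\Delta=\Delta_r$ and the regular action, (11) combined with the bialgebra axiom \eqref{eq:LB1} would force $(\id-\tau)(x_1\otimes x_2b)=0$ for all $x,b$, which fails in general (conditions (4) and (12) fail for the same reason), so the object the paper constructs is not a left-symmetric coalgebra in ${}^{A}_{A}\mathcal{M}{}^{A}_{A}$ in the sense required by its own definition of braided left-symmetric bialgebra. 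In short: the paper's route keeps the more natural-looking coproduct $\Delta_r$ but proves strictly less than the definition demands, while your route sacrifices nondegeneracy of the braided coproduct in exchange for a complete verification of every axiom, and it cleanly isolates where $r=\tau(r)$ enters (only in (10) and (BB2)) versus where the $S$-equation enters (only through the preceding Hopf-bimodule theorem).
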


\begin{proof}
All we need to do is to verify the braided compatibility conditions $(BB1)$ and $(BB2)$.
For $(BB1)$, we have the right hand side is equal to the left hand side by
\begin{eqnarray*}
&& a b\li\ot b\lii+b\li\ot [a, b\lii]-b a\li\ot a\lii -a\li \ot [b, a\lii]+a_{(0)} \otimes\left(a_{(1)} \trr b\right)\\
&&+(a\trl b_{(-1)}) \otimes b_{(0)}+b_{(0)} \otimes (a\trl b_{(1)})-b_{(0)} \otimes\left(b_{(1)} \trr a\right)-(b\trl a_{(-1)}) \otimes a_{(0)}-a_{(0)} \otimes (b\trl a_{(1)})\\
&=&a(bu_{i})\ot v_{i}+au_{i}\ot [b, v_{i}]+bu_{i}\ot [a, v_{i}]+u_{i}\ot [a,  [b,v_{i}]]-b(au_{i})\ot v_{i}\\
&&-bu_{i}\ot [a, v_{i}]-au_{i}\ot [b, v_{i}]-u_{i}\ot [b, [a, v_{i}]]+au_{i}\ot v_{i}b+au_{i}\ot [b, v_{i}]\\
&&+bu_{i}\ot av_{i}-bu_{i}\ot v_{i}a-bu_{i}\ot [a, v_{i}]-au_{i}\ot bv_{i}\\
&=&a(bu_{i})\ot v_{i}+u_{i}\ot [a, [b, v_{i}]]-b(au_{i})\ot v_{i}-u_{i}\ot [b, [a,  v_{i}]]\\
&=&a(bu_{i})\ot v_{i}-b(au_{i})\ot v_{i}+u_{i}\ot a(bv_{i})+u_{i}\ot (v_{i}b)a-u_{i}\ot b(av_{i})-u_{i}\ot (v_{i}a)b\\
&=&(ab)u_{i}\ot v_{i}-(ba)u_{i}\ot v_{i}+u_{i}\ot (ab)v_{i}-u_{i}\ot (ba)v_{i}-u_{i}\ot v_{i}(ab)+u_{i}\ot v_{i}(ba)\\
&=&[a, b]u_{i}\ot v_{i}+u_{i}\ot[[a, b], v_{i}]\\
&=&\Delta_{r}([a, b]).
\end{eqnarray*}

For  $(BB2)$,  by similar computations,  we obtain
\begin{eqnarray*}
&&(\id-\tau)\Big(a_{1} \otimes a_{2} b+a b_{1} \otimes b_{2}+ b_{1} \otimes a b_{2}\\
&&+a_{(0)} \otimes\left(a_{(1)} \trr b\right)+(a\trl b_{(-1)}) \otimes b_{(0)}+b_{(0)} \otimes (a\trl b_{(1)})\Big)\\
&=&(\id-\tau)\Big(au_{i}\ot v_{i}b+u_{i}\ot (av_{i})b- u_{i}\ot (v_{i}a)b+a(bu_{i})\ot v_{i}+au_{i}\ot bv_{i} \\
&&-au_{i}\ot v_{i}b+bu_{i}\ot av_{i}+u_{i}\ot a(bv_{i})-u_{i}\ot a(v_{i}b) +au_{i}\ot v_{i}b+au_{i}\ot [b, v_{i}]+bu_{i}\ot av_{i}\Big)\\
&=&(\id-\tau)\Big(u_{i}\ot (av_{i})b- u_{i}\ot (v_{i}a)b+a(bu_{i})\ot v_{i}+u_{i}\ot a(bv_{i})-u_{i}\ot a(v_{i}b)\Big)\\
&=&(\id-\tau)(-u_{i}\ot v_{i}(ab))\\
&=&(\id-\tau)\Big((ab)u_{i}+u_{i}\ot[ab, v_{i}]\Big)\\
&=&(\id-\tau)\Delta_{r}(ab).
\end{eqnarray*}
Thus $(BB1)$ and $(BB2)$ holds. This completed the proof.
\end{proof}

\subsection{From braided left-symmetric bialgebra to braided Lie bialgebra}
We know that  one can get a Lie bialgebra from a left-symmetric bialgebra by redefine their product and coproduct in \cite{Bai08}. The proof process can be rewritten as follows.
\begin{theorem}(\cite{Bai08})
Let $(A, \cdot, \Delta)$ be a left-symmetric bialgebra. Then $(A, [\cdot, \cdot], \delta)$ is a Lie bialgebra with bracket $[a, b]=ab-ba$ and cobracket $\delta(a)=(\Delta-\tau\Delta)(a)$ for any $a\in A$ if and only if
\begin{eqnarray}
a\li b\ot a\lii-a\lii\ot a\li b=b\li a\ot b\lii-b\lii\ot b\li a.
\end{eqnarray}
\end{theorem}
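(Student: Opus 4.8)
The Lie algebra structure needs no work: $(A,[\cdot,\cdot])$ is precisely the Lie algebra $\mathfrak{g}(A)$ attached to the left-symmetric algebra $(A,\cdot)$ in Section 2. So the plan is to verify the two remaining axioms of a Lie bialgebra, namely that $\delta=(\id-\tau)\Delta$ is a Lie cobracket and that it satisfies the one-cocycle compatibility; the stated identity will turn out to be equivalent to this last condition alone, which yields the ``if and only if''.

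First I would verify the cobracket axioms. Antisymmetry is immediate, since $\tau(\id-\tau)=\tau-\id=-(\id-\tau)$ gives $\tau\delta=-\delta$. For the co-Jacobi identity I would dualize the classical argument that left-symmetry implies the Jacobi identity: the left-symmetric coalgebra axiom says exactly that $\big((\Delta\ot\id)-(\id\ot\Delta)\big)\Delta$ is fixed by $\tau_{12}$, and feeding this $\tau_{12}$-symmetry into the cyclic sum $(\id+\xi+\xi^2)(\id\ot\delta)\delta$ makes it collapse to zero, just as $\tau_{12}$-symmetry of the associator forces Jacobi for $[\cdot,\cdot]$. No extra hypothesis enters at this stage.

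The heart of the proof is the one-cocycle condition
\[
\delta([a,b])=\ad_a\,\delta(b)-\ad_b\,\delta(a),\qquad \ad_a(x\ot y):=[a,x]\ot y+x\ot[a,y].
\]
I would expand the left-hand side by applying $(\id-\tau)$ to the compatibility \eqref{eq:LB0}, and expand the right-hand side directly from $\delta=(\id-\tau)\Delta$ together with the definition of $\ad$. Comparing the resulting eight-term expressions, every term carrying a bracketed coproduct leg of the form $b\li\ot[a,b\lii]$, $[a,b\lii]\ot b\li$, $a\li\ot[b,a\lii]$, $[b,a\lii]\ot a\li$ cancels in pairs between the two sides. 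After rewriting the remaining brackets via $[a,b\li]=ab\li-b\li a$ and $[b,a\li]=ba\li-a\li b$, what survives is exactly
\[
\big(a\li b\ot a\lii-a\lii\ot a\li b\big)-\big(b\li a\ot b\lii-b\lii\ot b\li a\big),
\]
that is, the difference of the two sides of the displayed identity. This vanishes precisely when the stated identity holds, establishing the equivalence in both directions.

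The main obstacle is purely the bookkeeping in this final comparison: one must carry eight monomials on each side, correctly pair off the four bracketed terms that cancel identically, and then isolate the four residual products so that they assemble cleanly into the two sides of the theorem's identity.
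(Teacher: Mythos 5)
Your proof is correct. There is a caveat in the comparison you asked for: the paper itself contains no proof of this statement — it is quoted from \cite{Bai08} with the argument omitted — so there is nothing to check your write-up against line by line. On its own merits, your route is the natural (and, as far as I can tell, the intended) one: the Lie algebra axioms are automatic for the commutator of a left-symmetric product; antisymmetry of $\delta$ follows from $\tau(\id-\tau)=-(\id-\tau)$; co-Jacobi follows by dualizing the classical argument that $\tau_{12}$-symmetry of the associator forces Jacobi, using the $\tau_{12}$-symmetry in the left-symmetric coalgebra axiom; and the 1-cocycle condition, expanded via \eqref{eq:LB0}, collapses after the four bracketed terms ($b\li\ot[a,b\lii]$, $[a,b\lii]\ot b\li$, $a\li\ot[b,a\lii]$, $[b,a\lii]\ot a\li$) cancel pairwise, leaving
\begin{equation*}
b\li a\ot b\lii-a\li b\ot a\lii-b\lii\ot b\li a+a\lii\ot a\li b,
\end{equation*}
which vanishes exactly when the displayed identity holds (your version differs from this by an overall sign, which is immaterial to the equivalence). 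I verified this cancellation independently and it is exactly as you describe. Your computation also matches the pattern the authors use in their braided generalization immediately afterwards (the theorem with conditions \eqref{braided01}--\eqref{braided02}), where the same expansion and cancellation is carried out with additional module and comodule terms, so your proof is consistent with the paper's methodology even though the unbraided case is left uncited.
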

Next,  we will show that we can get a braided Lie bialgebra from a braided left-symmetric bialgebra, before which we recall the definition of braided Lie bialgebra.

\newpage
\begin{definition}(\cite{So96,Ma00})
Let $H$ be  a Lie bialgebra.
 If $V$ is  a left $H$-module and left $H$-comodule, satisfying the following condtion:
\begin{eqnarray}\label{lieydm}
\phi_L (x\trr_L v)= [x, v\boi]\ot v \boo + v\boi \ot x\trr_L v\boo + x_{[1]} \ot x_{[2]}\trr_L v,
\end{eqnarray}
then $V$ is called a left Yetter-Drinfeld module over $H$.
\end{definition}
We denote  the  category of Yetter-Drinfeld modules over $H$ by
${}^{H}_{H}\mathcal{M}$.

\begin{definition}(\cite{So96,Ma00})
Let $H$ be a  Lie bialgebra,  $A$  a left Yetter-Drinfeld module over $H$. We call $A$ a \emph{braided Lie
bialgebra} in ${}^{H}_{H}\mathcal{M}^{ }_{ }$ if the following condition is
satisfied:
\begin{eqnarray}\label{braidedliebialgebra}
\delta ( [a, b ] )&=& [ a, b_{[1]} ] \otimes  b_{[2]}+b_{[1]}  \otimes [ a, b_{[2]}]+ [a_{[1]} ,  b] \otimes  a_{[2]}+ a_{[1]} \otimes [a_{[2]},  b]\\
 \notag&&+a_{[-1]} \trr_L  b \otimes  a_{[0]}+ b_{[0]}\otimes  b_{[-1]} \trr_L  a-  b_{[-1]} \trr   a \otimes  b_{[0]} - a_{[0]}\otimes  a_{[-1]}\trr_L  b,
\end{eqnarray}
where we denote the Lie  cobracket by $\delta(a)=a\bi\ot a\bii$ and left  comodule by $\phi_L(a)=a\boi\ot a\boo$.
\end{definition}

\begin{theorem}
Let $H$ be a left-symmetric bialgebra. If $(A,  \cdot,  \Delta)$ is a braided left-symmetric bialgebra in ${}^{H}_{H}\mathcal{M}{}^{H}_{H}$. Define the bracket by $[a, b]=ab-ba$ and cobracket by $\delta(a)=(\Delta-\tau\Delta)(a)$.
Then   $(A, [\cdot, \cdot], \delta)$ is a braided Lie bialgebra if and only if
\begin{eqnarray}\label{braided01}
x_2 \otimes x_1 \trr a=a_{(-1)} \otimes \otimes a_{(0)}-a_{(1)} \otimes a_{(0)} \trl x,\\
\notag a\li b\ot a\lii-a\lii\ot a\li b+(a\loi\trr b)\ot a\loo-a\loo\ot (a\loi \trr b)\\
=b\li a\ot b\lii-b\lii\ot b\li a+(b\loi\trr b)\ot b\loo-b\loo\ot (b\loi\trr b).\label{braided02}
\end{eqnarray}
\end{theorem}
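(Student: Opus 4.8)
The plan is to reduce the braided-Lie-bialgebra axiom to the braided-left-symmetric-bialgebra axioms we already have, by ``antisymmetrizing'' everything in sight. Concretely, I would start from the defining cobracket $\delta=(\id-\tau)\Delta$ and compute $\delta([a,b])$ directly. Since $[a,b]=ab-ba$, I would write
$$
\delta([a,b])=(\id-\tau)\Delta(ab)-(\id-\tau)\Delta(ba),
$$
and then feed each term through the braided compatibility condition (BB2), which controls precisely $(\id-\tau)\Delta$ of a product. This is the natural move because (BB2) is already stated in antisymmetrized form, so it should plug straight into the antisymmetrized cobracket without my having to know $\Delta(ab)$ itself.

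The key steps, in order, are as follows. First, apply (BB2) to both $\Delta(ab)$ and $\Delta(ba)$ and subtract. On the left this gives $\delta([a,b])$; on the right I get $(\id-\tau)$ applied to the difference of the six ``braided'' terms with $a,b$ swapped in the second group. Second, I would recognize that the purely algebraic/coalgebraic pieces $a_1\otimes a_2 b+ab_1\otimes b_2+b_1\otimes ab_2$ (minus their $b\leftrightarrow a$ counterparts), after applying $(\id-\tau)$, reassemble into the ordinary Lie-bialgebra right-hand side $[a,b_{[1]}]\otimes b_{[2]}+b_{[1]}\otimes[a,b_{[2]}]+[a_{[1]},b]\otimes a_{[2]}+a_{[1]}\otimes[a_{[2]},b]$; this is exactly the content of the earlier theorem from \cite{Bai08} (the one requiring the identity $a_1 b\otimes a_2-a_2\otimes a_1 b=b_1 a\otimes b_2-b_2\otimes b_1 a$), and its condition is precisely \eqref{braided02} here, now carrying the extra comodule terms. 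Third, I would collect the remaining module/comodule terms $a_{(0)}\otimes(a_{(1)}\trr b)+(a\trl b_{(-1)})\otimes b_{(0)}+b_{(0)}\otimes(a\trl b_{(1)})$ and their swapped partners, and show that after $(\id-\tau)$ they match the braided tail $a_{[-1]}\trr_L b\otimes a_{[0]}+b_{[0]}\otimes b_{[-1]}\trr_L a-b_{[-1]}\trr a\otimes b_{[0]}-a_{[0]}\otimes a_{[-1]}\trr_L b$ appearing in \eqref{braidedliebialgebra}. The translation between the two-sided (co)action $(\trr,\trl,\phi,\psi)$ over the left-symmetric bialgebra $H$ and the one-sided Yetter--Drinfeld data $(\trr_L,\phi_L)$ over the associated Lie bialgebra is what condition \eqref{braided01} is designed to furnish: it identifies the correct left $H$-module and left $H$-comodule structures (via $\phi_L(a)=a_{(-1)}\otimes a_{(0)}-a_{(1)}\otimes\cdots$ read off from $\phi,\psi$) and guarantees that $V=A$ is genuinely a Yetter--Drinfeld module in the sense of \eqref{lieydm}.

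The main obstacle, as I see it, is bookkeeping the comodule terms under $(\id-\tau)$ and matching them against \eqref{lieydm} and the tail of \eqref{braidedliebialgebra}: the two-sided structure $(\phi,\psi)$ produces both left and right legs, and one must verify that the combination collapses to a single left-comodule $\phi_L$ plus a single action $\trr_L$ in exactly the pattern \eqref{braided01} prescribes, with no leftover cross terms. In particular I expect to need the Yetter--Drinfeld compatibility \eqref{lieydm} itself to cancel a residual term of the form $x_{[1]}\otimes x_{[2]}\trr_L v$, which is why verifying that $A$ lies in ${}^{H}_{H}\mathcal{M}$ (i.e.\ that \eqref{braided01} makes $A$ a YD-module) must be done first or in tandem. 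Once the YD-module structure is pinned down by \eqref{braided01}, the remaining identity is precisely \eqref{braided02}, so the equivalence should close. I would carry out the symbolic cancellation carefully but expect no conceptual surprises beyond the index-chasing.
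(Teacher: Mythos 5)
Your proposal is correct and follows essentially the same route as the paper's proof: the paper likewise passes to the one-sided structures $\trr_L=\trr-\trl$ and $\phi_L=\phi-\tau\psi$, verifies the Yetter--Drinfeld condition \eqref{lieydm} using \eqref{braided01} together with (HM3)--(HM4), and then matches the cobracket identity by expanding through (BB1)/(BB2) and cancelling the leftover terms exactly by \eqref{braided02}. The only cosmetic difference is direction --- the paper computes from the right-hand side of \eqref{braidedliebialgebra} toward $\delta([a,b])$, while you start from $\delta([a,b])=(\id-\tau)\Delta(ab)-(\id-\tau)\Delta(ba)$ and apply (BB2) twice --- and, as you anticipate, the Yetter--Drinfeld verification is a separate check rather than a cancellation tool inside the cobracket computation.
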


\begin{proof}

In order to prove that $A$ is a braided Lie bialgebra over $H$,  we  define the left $H$-module and  left $H$-comodule by:
\begin{align*}
&\trr_{L}=\trr-\trl: H\ot A \to A, \quad\phi_{L}=\phi-\tau\psi:A\to H\ot A,
\end{align*}
that is
\begin{align*}
&x\trr_{L} a=x\trr a- a\trl x, ~~~~\phi_{L}(a)=a_{[-1]}\ot a_{[0]}=a_{(-1)}\ot a_{(0)}-a_{(1)}\ot a_{(0)},
\end{align*}
 for any $a\in A$.

First we prove that $A$ is a Yetter-Drinfeld modules over $H$.
We compute as follows:
\begin{eqnarray*}
&&[x, {a}\boi]\ot {a}\boo+{a}\boi\ot (x\trr_{L} {a}\boo)+x\bi\ot (x\bii\trr_{L} {a}) \\
&=&[x, {a}\loi]\ot {a}\loo-[x, {a}_{(1)}]\ot {a}\loo+{a}\loi\ot (x\trr_{L} {a}\loo)-{a}_{(1)}\ot (x\trr_{L} {a}\loo)\\
&&+x\li\ot (x\lii\trr_{L} {a})-x\lii\ot (x\li\trr_{L} {a})\\
&=&x {a}\loi\ot {a}\loo-{a}\loi x\ot {a}\loo-x {a}_{(1)}\ot {a}\loo+{a}_{(1)} x\ot {a}\loo\\
&&+{a}\loi\ot (x\trr {a}\loo)-{a}\loi\ot (  {a}\loo\trl x)-{a}_{(1)}\ot (x\trr {a}\loo)+{a}_{(1)}\ot ( {a}\loo\trl x)\\
&&+x\li\ot (x\lii\trr {a})-x\li\ot ({a} \trl x\lii)-x\lii\ot (x\li\trr {a})+x\lii\ot ({a}\trl x\li )\\
&=&x_{1} \otimes\left(x_{2}\trr {a}\right)+x {a}_{(-1)}\otimes {a}_{(0)}+{a}_{(-1)}\ot( x\trr {a}_{(0)})-{a}_{(1)}\ot (x\trr {a}_{(0)})-x{a}_{(1)} \ot {a}_{(0)}\\
&&-{a}_{(-1)} \otimes\left({a}_{(0)} \trl x\right)-x_{1}\ot ({a}\trl x_{2})+{a}_{(1)}x\ot {a}_{(0)}+ x_{2}\ot ({a} \trl  x_{1})\\
&&-{a}\loi x\ot {a}\loo +{a}_{(1)}\ot ( {a}\loo\trl x)-x\lii\ot (x\li\trr {a})\\
&=&\phi(x \trr {a})-\tau\psi(x\trr {a})-\phi({a} \trl x)+\tau\psi({a} \trl x )\\
&=&\phi_{L}(x \trr {a})-\phi_{L}({a} \trl x)\\
&=&\phi_{L}(x \trr_{L} {a}),
\end{eqnarray*}
where we use the conditions  \eqref{braided01}, (HM3) and  (HM4) in the fourth equality.

Next we check that the condition \eqref{braidedliebialgebra} holds.
By direct calculation, we have
\begin{eqnarray*}
&&[ a, b_{(1)} ] \otimes  b_{(2)}+b_{(1)}\otimes [ a, b_{(2)}]+ [a_{(1)},  b] \otimes  a_{(2)}+ a_{(1)} \otimes [a_{(2)},  b]\\
&&+a_{[-1]} \trr_{L}  b \otimes  a_{[0]}+ b_{[0]}\otimes  b_{[-1]} \trr_{L}  a-  b_{[-]} \trr_{L}   a \otimes  b_{[0]} - a_{[0]}\otimes  a_{[-1]}\trr_{L}  b\\
&=&ab_{(1)}\ot b_{(2)}-b_{(1)}a\ot b_{(2)}+b_{(1)}\ot ab_{(2)}-b_{(1)}\ot b_{(2)}a+a_{(1)}b\ot a_{(2)}- b a_{(1)}\ot a_{(2)}\\
&&+a_{(1)} \otimes a_{(2)}b-a_{(1)} \otimes ba_{(2)}+a_{(-1)} \trr_{L}  b \otimes  a_{(0)}-a_{(1)} \trr_{L}  b \otimes  a_{(0)}
+ b_{(0)}\otimes  b_{(-1)} \trr_{L}  a\\
&&-b_{(0)}\otimes  b_{(1)} \trr_{L}  a-  b_{(-1)} \trr_{L}   a \otimes  b_{(0)}+ b_{(1)} \trr_{L}   a \otimes  b_{(0)} - a_{(0)}\otimes  a_{(-1)}\trr_{L}  b+a_{(0)}\otimes  a_{(1)}\trr_{L}  b\\
&=&ab\li\ot b\lii-ab\lii\ot b\li-b\li a\ot b\lii+b\lii a\ot b\li+b_{1}\ot ab_{2}-b_{2}\ot ab_{1}-b_{1}\ot b_{2}a\\
&&+b_{2}\ot b_{1}a+a_{1}b\ot a_{2}-a_{2}b\ot a_{1}- b a_{1}\ot a_{2}+b a_{2}\ot a_{1}+a_{1} \otimes a_{2}b-a_{2} \otimes a_{1}b\\
&&-a_{1} \otimes ba_{2}+a_{2} \otimes ba_{1}+a_{(-1)} \trr  b \otimes  a_{(0)}-b\trl a_{(-1)} \otimes  a_{(0)}-a_{(1)}\trr b \otimes  a_{(0)}\\
&&+b\trl a_{(1)} \otimes  a_{(0)}
+ b_{(0)}\otimes  b_{(-1)} \trr a-b_{(0)}\otimes a\trl b_{(-1)}-b_{(0)}\otimes  b_{(1)} \trr  a +b_{(0)}\otimes a\trl b_{(1)}\\
&&-b_{(-1)} \trr  a \otimes  b_{(0)}+a\trl b_{(-1)} \otimes  b_{(0)}+ b_{(1)} \trr a \otimes  b_{(0)}-a\trl b_{(1)}\otimes  b_{(0)}\\
&&- a_{(0)}\otimes  a_{(-1)}\trr b+a_{(0)}\otimes  b\trl a_{(-1)}+a_{(0)}\otimes  a_{(1)}\trr   b-a_{(0)}\otimes b\trl a_{(1)}\\
&=&\Delta(ab)-\Delta(ba)-\tau\Delta(ab)+\tau\Delta(ba)+a\li b\ot a\lii-a\lii\ot a\li b-b\li a\ot b\lii+b\lii\ot b\li a\\
&&+(a\loi\trr b)\ot a\loo-a\loo\ot (a\loi \trr b)-(b\loi\trr b)\ot b\loo+b\loo\ot (b\loi\trr b)\\
&=&\Delta(ab)-\Delta(ba)-\tau\Delta(ab)+\tau\Delta(ba)\\
&=&\delta(ab-ba)=\delta ( [a, b ] ),
\end{eqnarray*}
where we use condition \eqref{braided02} in the fourth equality. The proof is completed.
\end{proof}

\section{Unified product of  left-symmetric bialgebras}
\subsection{Matched pair of braided   left-symmetric bialgebras}

In this section, we construct   the double cross biproduct from a matched pair of braided  left-symmetric bialgebras.

Let $A, H$ be both left-symmetric  algebras and  left-symmetric coalgebras.   For any $a, b\in A$, $x, y\in H$,  we denote maps
\begin{align*}
&\ppr: H \otimes A \to A,\quad \ppl: A\otimes H\to A,\\
&\trr: A\otimes H \to H,\quad \trl: H\otimes A \to H,\\
&\phi: A \to H \otimes A,\quad  \psi: A \to A\otimes H,\\
&\rho: H  \to A\otimes H,\quad  \gamma: H \to H \otimes A,
\end{align*}
by
\begin{eqnarray*}
&& \ppr (x \otimes a) = x \ppr a, \quad \ppl(a\otimes x) = a \ppl x, \\
&& \trr (a \otimes x) = a \trr x , \quad \trl(x \otimes a) = x \triangleleft a, \\
&& \phi (a)=\sum a\lmoi\ot a\loo, \quad \psi (a) = \sum a\loo \ot a\lmi,\\
&& \rho (x)=\sum x\boi\ot x\boo, \quad \gamma (x) = \sum x\boo \ot x\bi.
\end{eqnarray*}

\begin{definition}\cite{Bai08}
A \emph{matched pair} of left-symmetric  algebras is a system $(A, \, {H},\, \trl, \, \trr, \, \ppl, \, \ppr)$ consisting of two left-symmetric  algebras $A$ and ${H}$ and four bilinear maps $\triangleleft: {H}\otimes A\to {H}$, $\trr: {A} \otimes H
\to H$, $\ppl: A \otimes {H} \to A$, $\ppr: H\otimes {A} \to {A}$ such that $({H}, \, \trr, \, \trl)$ is an $A$-bimodule,  $(A, \, \ppr, \, \ppl)$ is an ${H}$-bimodule and satisfying the following compatibility conditions:
\begin{enumerate}
\item[(AM1)] $x \ppr(a b) =(x\ppr a-a\ppl x) b+(x\trl a-a\trr x) \ppr b+a(x\ppr b)+a\ppl(x\trl b)$,
\item[(AM2)] $[a,b]\ppl x =a(b\ppl x)+a\ppl (b \trr x)-b(a\ppl x)-b\ppl(a\trr x)$.
\item[(AM3)] $ a \trr (x y) =(a \trr x-x\trl a) y+(a\ppl x-x\ppr a) \trr y+x\trl (a\ppl y)+x(a\trr y)$,
\item[(AM4)] $[x, y] \trl a=x \trl(y \ppr a)-y \trl(x \ppr a)+x(y \trl a)-y(x \trl a)$,
\end{enumerate}
\end{definition}

\begin{lemma}\cite{Bai08}
Let  $(A, \, {H},\, \trl, \, \trr, \, \ppl, \, \ppr)$ be a matched pair of  left-symmetric   algebras.
Then $A \, \bowtie {H}:= A \oplus  {H}$, as a vector space, with the product defined for any $a, b\in A$ and $x, y\in {H}$ by
\begin{equation}
(a, x) (b, y) := \big(ab+ a \ppl y + x\ppr b,\, \, a\trr y + x\trl b + xy \big),
\end{equation}
is a   left-symmetric  algebra called the \emph{bicrossed product} associated to the matched pair of   left-symmetric  algebras $A$ and ${H}$.
\end{lemma}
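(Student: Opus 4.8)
The plan is to verify directly that the product defined on $A \bowtie {H} := A \oplus {H}$ satisfies the left-symmetric identity, reducing it step by step to the four matched-pair compatibility conditions (AM1)--(AM4) together with the bimodule axioms. I would take three generic elements $(a,x),(b,y),(c,z)$ in $A \oplus {H}$ and expand both the associator-type expression $\big((a,x)(b,y)\big)(c,z)-(a,x)\big((b,y)(c,z)\big)$ and its $a\leftrightarrow b$, $x\leftrightarrow y$ swap, then show the two differences coincide. Because the product has four mixed components $\big(ab+a\ppl y+x\ppr b,\ a\trr y+x\trl b+xy\big)$, each multiplication produces many terms; the key organizational idea is to sort all resulting terms by which ``output slot'' ($A$ or ${H}$) they land in and by their homogeneity type in the input letters.

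First I would fix attention on the $A$-component of the left-symmetric identity and collect the terms according to the pattern of operations applied. The purely $A$-valued terms built from $a,b,c$ alone cancel because $(A,\cdot)$ is itself a left-symmetric algebra; similarly the terms living purely in ${H}$ and built from $x,y,z$ cancel because $({H},\cdot)$ is left-symmetric. The remaining terms are genuinely ``mixed'', and these are exactly where the compatibility conditions enter. I would group the mixed $A$-component terms so that those involving $x\ppr(bc)$-type expressions are matched against the expansion prescribed by (AM1), while the terms of the form $[a,b]\ppl z$ are matched against (AM2); the ${H}$-bimodule axioms for $(A,\ppr,\ppl)$ handle the remaining associativity-like cancellations among terms with two ${H}$-inputs acting on a single $A$-input. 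Symmetrically, the ${H}$-component terms split into those governed by (AM3) (the $a\trr(xy)$ pattern), by (AM4) (the $[x,y]\trl c$ pattern), and those controlled by the $A$-bimodule axioms for $({H},\trr,\trl)$.

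The main obstacle I anticipate is purely bookkeeping: each side of the identity expands into roughly two dozen terms per slot, and the matched-pair conditions (AM1)--(AM4) are each needed to cancel a specific cluster rather than being applied once globally. Care is required because several terms superficially look mixed but actually cancel against each other via the left-symmetry of $A$ or ${H}$ after regrouping the ``flip'' by $a\leftrightarrow b$; misattributing such a term to a compatibility condition would leave an uncancelled residue. Concretely, the subtlety is that (AM1) and (AM3) are the genuinely nonsymmetric conditions (they lack a built-in antisymmetrization), so I expect their contributions to survive individually and require exact matching, whereas (AM2) and (AM4) naturally produce the commutator brackets $[a,b]$ and $[x,y]$ that appear once the two swapped associators are subtracted.

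The cleanest way to present this is to record the $A$-component and ${H}$-component of the full expression $\big((a,x)(b,y)\big)(c,z)-(a,x)\big((b,y)(c,z)\big)-\big((b,y)(a,x)\big)(c,z)+(b,y)\big((a,x)(c,z)\big)$ separately, show each is zero, and remark that vanishing of the $A$-slot is equivalent to left-symmetry of $A$ together with (AM1)--(AM2) and the ${H}$-bimodule structure on $A$, while vanishing of the ${H}$-slot is equivalent to left-symmetry of ${H}$ together with (AM3)--(AM4) and the $A$-bimodule structure on ${H}$. Since this is precisely the content assembled into the definition of a matched pair, the converse holds as well, giving the stated bicrossed product structure.
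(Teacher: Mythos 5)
Your proposal is correct and is essentially the paper's approach: the paper states this lemma as a citation to Bai's work, but its own proof of the more general cocycle cross product theorem (conditions (CP1)--(CP8), which reduce exactly to (AM1)--(AM4) plus the two bimodule axioms when $\sigma=\theta=0$) proceeds by precisely the direct expansion you describe. Your term-by-term clustering — pure $A$ and pure $H$ terms killed by left-symmetry, the $[a,b]\ppl z$ and $[x,y]\trl c$ clusters matched to (AM2)/(AM4), the two asymmetric clusters each matched to an instance of (AM1)/(AM3), and the remaining two-module-input clusters absorbed by the bimodule axioms — is an accurate account of how the cancellation actually works.
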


Now we introduce the notion of matched pairs of  left-symmetric  coalgebras, which is the dual version of  matched pairs of  left-symmetric algebras.

\begin{theorem}
Let $A, H$ be both left-symmetric coalgebras, and there be four bilinear maps
$\phi: {A}\to H\otimes A$, $\psi: {A}\to A \otimes H$, $\rho: H\to A\otimes {H}$, $\gamma: H \to {H} \ot {A}$.
 We define $E=A\lrcoprod H$ as the vector space $A\oplus H$ with   coproduct
$$\Delta_{E}(a)=(\Delta_{A}+\phi+\psi)(a),\quad i.e. ~~\Delta_{E}(a)=\sum a\li \ot a\lii+\sum a\loi \ot a\loo+\sum a\mo\ot a\mi;$$
$$ \Delta_{E}(x)=(\Delta_{H}+\rho+\gamma)(x),\quad i.e. ~~\Delta_{E}(x)=\sum x\li \ot x\lii+\sum  x\boi \ot x\boo+\sum x\boo \ot x\bi.$$
Then  $A\lrcoprod H$ is a   left-symmetric  coalgebra if and only if $(A, \, {H}, \, \phi, \, \psi, \, \rho, \, \gamma)$ satisfies the following compatibility conditions for any $a\in A$, $x\in {H}$:
\begin{enumerate}
\item[(MC1)] $\phi\left(a_{1}\right) \otimes a_{2}+\gamma\left(a_{(-1)}\right) \otimes a_{(0)}-a_{(-1)} \otimes \Delta_{A}\left(a_{(0)}\right)\\
    =\tau_{12}\left(\psi\left(a_{1}\right) \otimes a_{2}+\rho\left(a_{(-1)}\right) \otimes a_{(0)}-a_{1} \otimes \phi(a_{2})-a_{(0)} \otimes \gamma(a_{(1)})\right)$,
\item[(MC2)] $\Delta_{A}\left(a_{(0)}\right) \otimes a_{(1)} -a_{1} \otimes \psi\left(a_{2}\right)-a_{(0)} \otimes \rho\left(a_{(1)}\right)\\
    =\tau_{12}\left(\Delta_{A}\left(a_{(0)}\right) \otimes a_{(1)} -a_{1} \otimes \psi\left(a_{2}\right)-a_{(0)} \otimes \rho\left(a_{(1)}\right)\right)$,
\item[(MC3)] $\rho\left(x_{1}\right) \otimes x_{2}+\psi\left(x_{[-1]}\right) \otimes x_{[0]}-x_{[-1]} \otimes \Delta_{H}\left( x_{[0]}\right)  \\
    =\tau_{12}\left(\gamma\left(x_{1}\right) \otimes x_{2}+\phi\left(x_{[-1]}\right) \otimes x_{[0]}-x_{[0]} \otimes \psi\left(x_{[1]}\right)-x_{1} \otimes \rho(x_{2})\right)$,
\item[(MC4)] $\Delta_{H}(x\boo)\ot x\bi-x\boo\ot \phi(x\bi)-x_1\ot \gamma(x_2)\\
=\tau_{12}\left(\Delta_{H}(x\boo)\ot x\bi-x\boo\ot \phi(x\bi)-x_1\ot \gamma(x_2) \right)$,
\end{enumerate}
We call $(A, H, \phi, \psi, \rho, \gamma)$ satisfying these conditions the \emph{matched pair} of  left-symmetric  coalgebras and $A\lrcoprod H$ is called the \emph{bicrossed coproduct} associated to the matched pair of   left-symmetric  coalgebras $A$ and $H$.
\end{theorem}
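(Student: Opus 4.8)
The plan is to prove the equivalence by a direct computation establishing that the left-symmetric coalgebra axiom on $E = A \oplus H$ decomposes into components lying in the various tensor summands of $E^{\otimes 3}$, and to show that these components are exactly (MC1)--(MC4) together with the axioms already guaranteed by $A$ and $H$ being left-symmetric coalgebras. This is the natural dualization of the argument used in \cite{Bai08} for the bicrossed product of left-symmetric algebras (our Lemma above), so I would mimic that structure but with all arrows reversed. The coassociativity-type condition to verify is
\begin{equation*}
(\Delta_E \otimes \id)\Delta_E(c) - (\id \otimes \Delta_E)\Delta_E(c) = \tau_{12}\Big((\Delta_E \otimes \id)\Delta_E(c) - (\id \otimes \Delta_E)\Delta_E(c)\Big)
\end{equation*}
for every $c \in E$, and by linearity it suffices to check it separately for $c = a \in A$ and $c = x \in H$.

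First I would take $c = a \in A$ and expand $\Delta_E(a) = \Delta_A(a) + \phi(a) + \psi(a)$ using the Sweedler-type notations $\phi(a) = a\moi \otimes a\loo$ and $\psi(a) = a\loo \otimes a\mi$. Applying $\Delta_E$ again to each tensor-leg and subtracting produces a sum of terms living in $E \otimes E \otimes E$; the key bookkeeping step is to sort these terms according to which of $A$ and $H$ each of the three tensor factors belongs to. The purely $A\otimes A\otimes A$ terms reassemble into the left-symmetric coalgebra axiom for $(A,\Delta_A)$, which holds by hypothesis and hence cancels. The remaining terms, which have at least one $H$-factor, must cancel in groups; I would show that the terms whose ``distinguished'' factor pattern matches the left side of (MC1) cancel against those matching its $\tau_{12}$-image on the right, and similarly the second family of mixed terms yields (MC2). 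The symmetric computation with $c = x \in H$, expanding $\Delta_E(x) = \Delta_H(x) + \rho(x) + \gamma(x)$, produces (MC3) and (MC4), with the $H\otimes H\otimes H$ terms cancelling by coassociativity of $(H,\Delta_H)$.

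The main obstacle is the careful index/leg bookkeeping: because $\Delta_E$ mixes $A$ and $H$, applying it twice generates a large number of terms, and the essential point is to verify that after grouping by tensor-type the cross terms organize into precisely the four stated conditions and nothing extraneous survives. In particular one must confirm that the $\tau_{12}$ on the right-hand side of the coalgebra axiom interacts correctly with the mixed legs so that, for example, the term $\gamma(a\moi)\otimes a\loo$ coming from $(\id\otimes\Delta_E)\psi$ on the first slot lands opposite the right place in the flipped expression. I would organize the proof by writing out the full expansion once, then displaying the partition of terms into the four type-classes, and remarking that the vanishing of each class is equivalent to one of (MC1)--(MC4); the converse direction is immediate since each condition is literally the statement that its corresponding type-class of the coassociativity defect vanishes. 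No single step is deep, but the volume of terms makes transcription error the real risk, so I would double-check the computation by noting that (MC1)--(MC4) are the formal duals of the module/coalgebra-compatibility conditions appearing in the matched pair of left-symmetric \emph{algebras}, which provides an independent consistency check on the final form.
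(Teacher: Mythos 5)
Your strategy coincides with the paper's: the paper also proves this by expanding the defect $T(c)=(\Delta_E\otimes\id)\Delta_E(c)-(\id\otimes\Delta_E)\Delta_E(c)$ and matching terms (it treats a general element $(a,x)$ at once rather than $a$ and $x$ separately, which is the same thing by linearity), so in spirit your proposal is a faithful reconstruction of the paper's argument.

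There is, however, a concrete problem with your bookkeeping claim that for $c=a$ the mixed terms organize into \emph{precisely} (MC1) and (MC2) ``and nothing extraneous survives''; this is exactly where your own proposed double-check would fail. For $c=a$ the mixed part of $T(a)$ spans six type-classes of $E^{\otimes 3}$, and the symmetry $T=\tau_{12}(T)$ pairs them into \emph{four} independent conditions, not two: the $H\otimes A\otimes A$ class against the $\tau_{12}$-image of the $A\otimes H\otimes A$ class gives (MC1), and the ($\tau_{12}$-stable) $A\otimes A\otimes H$ class gives (MC2), but in addition the $H\otimes H\otimes A$ class forces $\Delta_H(a_{(-1)})\otimes a_{(0)}-a_{(-1)}\otimes\phi(a_{(0)})$ to be $\tau_{12}$-symmetric, and the $H\otimes A\otimes H$ class against the $A\otimes H\otimes H$ class forces $\psi(a_{(0)})\otimes a_{(1)}-a_{(0)}\otimes\Delta_H(a_{(1)})=\tau_{12}\bigl(\phi(a_{(0)})\otimes a_{(1)}-a_{(-1)}\otimes\psi(a_{(0)})\bigr)$. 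These last two are precisely the conditions that $(A,\phi,\psi)$ be an $H$-bicomodule, and they are \emph{not} among (MC1)--(MC4); the case $c=x$ likewise produces, besides (MC3) and (MC4), the two conditions that $(H,\rho,\gamma)$ be an $A$-bicomodule. So what your computation actually proves is: $E$ is a left-symmetric coalgebra iff (MC1)--(MC4) hold \emph{and} $A$ is an $H$-bicomodule \emph{and} $H$ is an $A$-bicomodule; without the bicomodule conditions the ``if'' direction is simply not available, since (MC1)--(MC4) say nothing about the four type-classes just described. To be fair, the paper's own proof has the identical gloss --- its displayed expansion contains the terms $\Delta_H(a_{(-1)})\otimes a_{(0)}$ and $-a_{(-1)}\otimes\phi(a_{(0)})$ and then simply asserts equivalence with the matched-pair conditions --- and elsewhere in the paper these bicomodule conditions are imposed as separate standing hypotheses (membership in ${}^{H}_{H}\mathcal{M}{}^{H}_{H}$ and ${}^{A}_{A}\mathcal{M}{}^{A}_{A}$). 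So you should either add the four bicomodule conditions to your list of surviving conditions, or state them as hypotheses; as written, your claim about the outcome of the sorting is false and the proof of the stated equivalence does not close.
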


\begin{proof} The proof is by direct computations. We need to prove that
 $(\Delta_{E} \otimes \id) \Delta_{E}(a, x)-(\id \otimes \Delta_{E}) \Delta_{E}(a, x)=\tau_{12}\left((\Delta_{E} \otimes \id) \Delta_{E}(a, x)-(\id \otimes \Delta_{E}) \Delta_{E}(a, x) \right)$. The left hand side is equal to
\begin{eqnarray*}
& &(\Delta_{E} \otimes \id) \Delta_{E}(a, x)-(\id \otimes \Delta_{E}) \Delta_{E}(a, x)\\
&=&(\Delta_{E}\otimes \id)\left(a_{1} \otimes a_{2}+a_{(-1)} \otimes a_{(0)}+a_{(0)} \otimes a_{(1)}+x_{1} \otimes x_{2}+x_{[-1]} \otimes x_{[0]}+x_{[0]} \otimes x_{[1]}\right)\\
&&-(\id \otimes \Delta_{E})\left(a_{1} \otimes a_{2}+a_{(-1)} \otimes a_{(0)}+a_{(0)} \otimes a_{(1)}+x_{1} \otimes x_{2}+x_{[-1]} \otimes x_{[0]}+x_{[0]} \otimes x_{[1]}\right)\\
&=&\Delta_{A}\left(a_{1}\right) \otimes a_{2}+\phi\left(a_{1}\right) \otimes a_{2}+\psi\left(a_{1}\right) \otimes a_{2}+\Delta_{H}\left(a_{(-1)}\right) \otimes a_{(0)}+\rho\left(a_{(-1)}\right) \otimes a_{(0)}\\
&&+\gamma\left(a_{(-1)}\right) \otimes a_{(0)}+\Delta_{A}\left(a_{(0)}\right) \otimes a_{(1)}+\phi\left(a_{(0)}\right) \otimes a_{(1)}+\psi\left(a_{(0)}\right) \otimes a_{(1)}+\Delta_{H}\left(x_{1}\right) \otimes x_{2}\\
&&+\rho\left(x_{1}\right) \otimes x_{2}+\gamma\left(x_{1}\right) \otimes x_{2}+\Delta_{A}\left(x_{[-1])}\right) \otimes x_{[0]}+\phi\left(x_{[-1]}\right) \otimes x_{[0]}+\psi\left(x_{[-1]}\right) \otimes x_{[0]}\\
&& +\Delta_{H}\left(x_{[0]}\right) \otimes x_{[1]}+\rho\left(x_{[0]}\right) \otimes x_{[1]}+\gamma\left(x_{[0]}\right) \otimes x_{[1]}\\
&& -a_{1} \otimes \Delta_{A}\left(a_{2}\right)-a_{1} \otimes \phi\left(a_{2}\right)-a_{1} \otimes \psi\left(a_{2}\right)
 -a_{(1)} \otimes \Delta_{A}\left(a_{(0)}\right)-a_{(-1)} \otimes \phi\left(a_{(0)}\right)\\
 &&-a_{(-1)} \otimes \psi\left(a_{(0)}\right) -a_{(0)} \otimes \Delta_{H}\left(a_{(1)}\right)-a_{(0)} \otimes \rho\left(a_{(1)}\right)-a_{(0)} \otimes \gamma\left(a_{(1)}\right)-x_{1} \otimes \Delta_{H}\left(x_{2}\right)\\
 &&-x_{1} \otimes \rho\left(x_{2}\right)-x_{1} \otimes \gamma\left(x_{2}\right) -x_{[-1]} \otimes \Delta_{H}\left(x_{[0]}\right)-x_{[-1]} \otimes \rho\left(x_{[0]}\right)-x_{[-1]} \otimes \gamma\left(x_{[0]}\right)\\
 &&-x_{[0]} \otimes \Delta_{A}\left(x_{[1]}\right)-x_{[0]} \otimes \phi\left(x_{[1]}\right)-x_{[0]} \otimes \psi\left(x_{[1]}\right).
\end{eqnarray*}
and the right hand side can be computed similarly. Thus the two sides are equal to each other  if and only if $(A, \, {H}, \, \phi, \, \psi, \, \rho, \, \gamma)$ is a matched pair of  left-symmetric   coalgebras.
\end{proof}

In the following of this section, we construct  left-symmetric bialgebra from the double cross biproduct of a pair of braided left-symmetric bialgebras.
First we generalize the concept of Hopf bimodule to the case of $A$ is not necessarily a left-symmetric bialgebra.
But by abuse of notation, we also call it Hopf bimodule.



\begin{definition}
Let $A$ be simultaneously a left-symmetric  algebra and a  left-symmetric coalgebra.
A left-symmetric Hopf bimodule over $A$ is a space $V$ endowed with maps
\begin{align*}
&\ppr: A \otimes V \to V,\quad \ppl: V\otimes A\to V,\\
&\rho: V  \to A\otimes V,\quad  \gamma: V \to V\otimes A,
\end{align*}
such that $V$ is simultaneously a bimodule, a  bicomodule over $A$ and satisfying
 the following compatibility conditions
 \begin{enumerate}
\item[(HM1')] $\gamma(a \ppr v)-\gamma(v\ppl a)=\left(a \ppr v_{[0]}\right) \otimes v_{[1]}+v_{[0]}\otimes [a, v_{[1]}] -(v\ppl a_{1})\ot a_{2},$
\item[(HM2')]$\rho(v \ppl a)-\rho(a\ppr v)$\\
$=v_{[-1]} \otimes\left(v_{[0]} \ppl a\right)+a_{1}\ot (v\ppl a_{2})-a_{1}\ot (a_{2}\ppr v)-av_{[-1]}\ot v_{[0]}-v_{[-1]}\ot (a \ppr v_{[0]})$,
\item[(HM3')] $\rho(a \ppr v)-\tau\gamma(a\ppr v)$\\
$=a_{1} \otimes\left(a_{2}\ppr v\right)+a v_{[-1]}\otimes v_{[0]}+v_{[-1]}\ot( a\ppr v_{[0]})-v_{[1]}\ot( a\ppr v_{[0]})-av_{[1]} \ot v_{[0]}$,
\item[(HM4')] $\rho(v \ppl a)-\tau\gamma(v \ppl a ) =v_{[-1]} \otimes\left(v_{[0]} \ppl a\right)+a_{1}\ot (v\ppl a_{2})-v_{[1]}a\ot v_{[0]}- a_{2}\ot (v \ppl a_{1})$,
\end{enumerate}

then $V$ is called a left-symmetric Hopf bimodule over $A$.
\end{definition}
We denote  the  category of  left-symmetric Hopf bimodules over $A$ by ${}^{A}_{A}\mathcal{M}{}^{A}_{A}$.
\begin{definition}
Let $A$ be a left-symmetric algebra and left-symmetric  coalgebra,  $H$  a left-symmetric Hopf bimodule over $A$. If $H$ is a left-symmetric algebra and a left-symmetric coalgebra in ${}^{A}_{A}\mathcal{M}^{A}_{A}$, then we call $H$ a \emph{braided left-symmetric bialgebra} over $A$  if the following conditions are satisfied:

\begin{enumerate}
\item[(BB3)] $\Delta_{H}([x,y]) $\\
$=x_{1} \otimes [x_{2} ,y]+x y_{1} \otimes y_{2}-y_{1} \otimes [y_{2}, x]-y x_{1} \otimes x_{2}+x_{[0]} \otimes\left(x_{[1]} \trr y\right)$\\
 $+\left(x \trl y_{[-1]}\right) \otimes y_{[0]}+ y_{[0]} \otimes \left(x \trl y_{[1]}\right)-y_{[0]} \otimes\left(y_{[1]} \trr x\right)-\left(y \trl x_{[-1]}\right) \otimes x_{[0]}- x_{[0]} \otimes \left(y \trl x_{[1]}\right)$,
\end{enumerate}
\begin{enumerate}
\item[(BB4)] $(\id-\tau)\left(\Delta_{H}(xy) \right)$\\
$=(\id-\tau)\Big(x_{1} \otimes x_{2} y+x y_{1} \otimes y_{2}+ y_{1} \otimes x y_{2}+x_{[0]} \otimes\left(x_{[1]} \trr y\right)+\left(x \trl y_{[-1]}\right) \otimes y_{[0]}+ y_{[0]} \otimes \left(x \trl y_{[1]}\right)\Big)$.
\end{enumerate}
\end{definition}

\begin{definition}\label{def:dmp}
Let $A, H$ be both  left-symmetric algebras and  left-symmetric coalgebras. If  the following conditions hold:
\begin{enumerate}
\item[(DM1)]  $\phi([a, b])  =a_{(-1)} \otimes[a_{(0)}, b]+b_{(-1)}\ot [a, b_{(0)}]+(a \trr b_{(-1)}) \otimes b_{(0)}-(b \trr a_{(-1)}) \otimes a_{(0)} $,

\item[(DM2)] $\psi([a, b])  $\\
$=a b_{(0)} \otimes b_{(1)}+a_{(0)} \otimes\left(a_{(1)} \trl b\right)  +b_{(0)}\ot (a\trr b_{(1)} )$\\
$-b a_{(0)} \otimes a_{(1)}-b_{(0)} \otimes\left(b_{(1)} \trl a\right) -a_{(0)}\ot (b\trr a_{(1)}) $,

\item[(DM3)] $\rho([x, y]) =x_{[-1]} \otimes [x_{[0]}, y]+y_{[-1]}\ot [x, y_{[0]}]+\left(x \ppr y_{[-1]}\right) \otimes y_{[0]}-\left(y \ppr x_{[-1]}\right) \otimes x_{[0]} $,

\item[(DM4)] $\gamma([x, y])  $\\
$=x_{[0]}\otimes (x_{[1]}\ppl y)+xy_{[0]}\otimes y_{[1]} +y\poo\ot (x\ppr y\bi)$\\
$-y_{[0]}\otimes (y_{[1]}\ppl x)-yx_{[0]}\otimes x_{[1]} -x\poo\ot (y\ppr x\bi)$,

\item[(DM5)] $\Delta_{A}(x \ppr b)-\Delta_{A}(b \ppl x) $ \\
$=x_{[-1]} \otimes\left(x_{[0]} \ppr b\right)+\left(x \ppr b_{1}\right) \otimes b_{2} +b\li \ot (x\ppr b\lii )$\\
$-b_{1} \otimes\left(b_{2} \ppl x\right)-\left(b\ppl x_{[0]}\right) \otimes x_{[1]} -x\boi \ot (b\ppl x\boo) $,

\item[(DM6)] $\Delta_{H}(x \trl b)-\Delta_{H}(b \trr x) $\\
$=x_{1} \otimes\left(x_{2}  \trl b\right)+\left(x\trl b_{(0)}\right) \otimes b_{(1)} +b_{(-1)}\ot (x\trl b\loo)$\\
$-b_{(-1)} \otimes\left(b_{(0)}\trr x\right)-\left(b \trr x_{1}\right) \otimes x_{2}  -x\li\ot (b\trr x\lii) $,

\item[(DM7)]
 $\phi(x \ppr b)-\phi(b \ppl x)+\gamma(x\trl b)-\gamma(b\trr x)$\\
 $=x_{1} \otimes\left(x_{2} \ppr b\right)+x b_{(-1)} \otimes b_{(0)}+\left(x\trl b_{1}\right) \otimes b_{2}+b_{(-1)}\ot (x\ppr b\loo) $\\
$-b_{(-1)} \otimes\left(b_{(0)}\ppl x\right) -\left(b\trr x_{[0]}\right)\otimes x_{[1]}-x\li  \ot (b\ppl x\lii)- x\poo\ot [b,x\bi]$,

\item[(DM8)]
$\psi(x \ppr b)-\psi(b \ppl x)+\rho(x\trl b)-\rho(b\trr x)$\\
$=(x \ppr b_{(0)}) \otimes b_{(1)} +x_{[-1]} \otimes\left(x_{[0]} \trl b\right)+b\li\ot (x\trl b\lii)+b\loo\ot [x,b\lmi]$\\
$ -\left(b\ppl x_{1}\right) \otimes x_{2}-b_{1} \otimes\left(b_{2} \trr x\right)-b x_{[-1]} \otimes x_{[0]}-x\boi \ot (b\trr x\poo) $,

\item[(DM9)]  $\phi(ab) -\tau\psi(a b) $\\
$=a_{(-1)} \otimes a_{(0)} b +(a \trr b_{(-1)}) \otimes b_{(0)} +b_{(-1)}\ot a b_{(0)} $\\
$-\tau\Big(a b_{(0)} \otimes b_{(1)}+ a_{(0)} \otimes\left(a_{(1)} \trl b\right)+ b_{(0)}\ot (a\trr b_{(1)})\Big) $,

\item[(DM10)] $\rho(x y)  -\tau\gamma(x y)  $\\
$=x_{[-1]} \otimes x_{[0]} y +\left(x \ppr y_{[-1]}\right) \otimes y_{[0]}+y_{[-1]}\ot xy_{[0]} $\\
$-\tau\Big(x_{[0]}\otimes (x_{[1]}\ppl y)+xy_{[0]}\otimes y_{[1]} +y\poo\ot (x\ppr y\bi)\Big)$,

\item[(DM11)]
 $\phi(x \ppr b) +\gamma(x\trl b) -\tau\psi(x \ppr b) -\tau\rho(x\trl b) $\\
 $=x_{1} \otimes\left(x_{2} \ppr b\right)+x b_{(-1)} \otimes b_{(0)}+x_{[0]} \otimes x_{[1]} b+\left(x\trl b_{1}\right) \otimes b_{2}+b_{(-1)}\ot (x\ppr b\loo) $\\
$-\tau\Big((x \ppr b_{(0)}) \otimes b_{(1)} +x_{[-1]} \otimes\left(x_{[0]} \trl b\right) +b\li\ot (x\trl b\lii)+b\loo\ot xb\lmi\Big)$,

\item[(DM12)]$\psi(a\ppl y) +\rho(a \trr y)-\tau \phi(a\ppl y) -\tau\gamma(a \trr y)$\\
$=a_{(0)} \otimes a_{(1)} y+\left(a\ppl y_{1}\right) \otimes y_{2}+a_{1} \otimes\left(a_{2} \trr y\right)+a y_{[-1]} \otimes y_{[0]}+y\boi \ot (a\trr y\poo) $\\
$-\tau\Big(a_{(-1)} \otimes\left(a_{(0)}\ppl y\right) +\left(a\trr y_{[0]}\right)\otimes y_{[1]} +y\li \ot( a\ppl y\lii)+ y\poo\ot ay\bi\Big)$,

\item[(DM13)] $(\id-\tau)(\Delta_{A}(x \ppr b)  )
 =(\id-\tau)\Big(x_{[-1]} \otimes\left(x_{[0]} \ppr b\right)+\left(x \ppr b_{1}\right) \otimes b_{2}  +b\li \ot (x\ppr b\lii) $\Big),

\item[(DM14)] $(\id-\tau)(\Delta_{A}(a\ppl y))
 =(\id-\tau)\Big(a_{1} \otimes\left(a_{2} \ppl y\right)+\left(a\ppl y_{[0]}\right) \otimes y_{[1]} +y\boi \ot (a\ppl y\boo) $\Big),

\item[(DM15)] $(\id-\tau)(\Delta_{H}(a \trr y))
 =(\id-\tau)\Big(a_{(-1)} \otimes\left(a_{(0)}\trr y\right)+\left(a \trr y_{1}\right) \otimes y_{2} +y\li\ot (a\trr y\lii)\Big),$

\item[(DM16)] $(\id-\tau)(\Delta_{H}(x \trl b))
=(\id-\tau)\Big(x_{1} \otimes\left(x_{2}  \trl b\right)+\left(x\trl b_{(0)}\right) \otimes b_{(1)} +b_{(-1)}\ot (x\trl b\loo)\Big),$
\end{enumerate}
\noindent then $(A, H)$ is called a \emph{double matched pair}.
\end{definition}

\begin{theorem}\label{main1}Let $(A, H, \trr, \trl, \ppr, \ppl)$ and $(A, H, \phi, \psi, \rho, \gamma)$ be a matched pair of left-symmetric algebras and coalgebras,
$A$ be  a  braided    left-symmetric bialgebra in
${}^{H}_{H}\mathcal{M}^{H}_{H}$ and $H$ be  a braided    left-symmetric bialgebra in
${}^{A}_{A}\mathcal{M}^{A}_{A}$. If we define the double cross biproduct of $A$ and
$H$, denoted by $A\lrbiprod H$, $A\lrbiprod H=A\bowtie H$ as left-symmetric
algebra, $A\lrbiprod H=A\lrcoprod H$ as  left-symmetric coalgebra, then
  $A\lrbiprod H$ becomes a   left-symmetric bialgebra if and only if  $(A, H)$ forms a double matched pair.
\end{theorem}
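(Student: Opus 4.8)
The plan is to capitalize on the fact that the two halves of the structure are already in place. Because $(A,H,\trl,\trr,\ppl,\ppr)$ is a matched pair of left-symmetric algebras, the bicrossed product lemma makes $A\bowtie H$ a left-symmetric algebra; because $(A,H,\phi,\psi,\rho,\gamma)$ is a matched pair of left-symmetric coalgebras, the bicrossed coproduct theorem makes $A\lrcoprod H$ a left-symmetric coalgebra. Thus $A\lrbiprod H$ is automatically a left-symmetric algebra and a left-symmetric coalgebra, and the whole theorem reduces to deciding when the product $(a,x)(b,y)=(ab+x\ppr b+a\ppl y,\,a\trr y+x\trl b+xy)$ and the coproduct $\Delta_E=\Delta_A+\phi+\psi$ on $A$, $\Delta_E=\Delta_H+\rho+\gamma$ on $H$, jointly satisfy the two left-symmetric bialgebra compatibilities \eqref{bialg01} and \eqref{bialg02}. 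Since both are multilinear, it suffices to test them on the four homogeneous argument types: $a,b\in A$; $x,y\in H$; and the two mixed pairs built from one element of $A$ and one of $H$.

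First I would expand the cocycle-type identity \eqref{bialg01} in each case and sort every term by the tensor bidegree it lands in ($A\ot A$, $A\ot H$, $H\ot A$, $H\ot H$). When $a,b\in A$ the bracket stays inside $A$, so $\Delta_E([a,b])=\Delta_A([a,b])+\phi([a,b])+\psi([a,b])$: the $A\ot A$ component is absorbed by the braided condition (BB1) (available since $A$ is braided in ${}^{H}_{H}\mathcal{M}^{H}_{H}$), and the $H\ot A$ and $A\ot H$ components are exactly (DM1) and (DM2). The case $x,y\in H$ is symmetric and yields (BB3), (DM3), (DM4). For a mixed pair one has $[(0,x),(b,0)]=(x\ppr b-b\ppl x,\,x\trl b-b\trr x)$, so $\Delta_E$ of it splits into the four cross pieces $\Delta_A(x\ppr b)-\Delta_A(b\ppl x)$, $\Delta_H(x\trl b)-\Delta_H(b\trr x)$, and the mixed $\phi/\gamma$ and $\psi/\rho$ combinations; matched against the right-hand side these become precisely (DM5)--(DM8).

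Running the same procedure on the antisymmetrized identity \eqref{bialg02} produces the remaining conditions. The pure cases give (BB2) together with (DM9) for $a,b\in A$, and (BB4) together with (DM10) for $x,y\in H$, the cross-degree pieces being the $\phi(ab)-\tau\psi(ab)$ (resp.\ $\rho(xy)-\tau\gamma(xy)$) combinations. The two mixed products $(0,x)(b,0)=(x\ppr b,\,x\trl b)$ and $(a,0)(0,y)=(a\ppl y,\,a\trr y)$, acted on by $\id-\tau$, deliver the coproduct conditions (DM13)--(DM16) in the $A\ot A$ and $H\ot H$ degrees and the $\tau$-twisted mixed conditions (DM11), (DM12) in the cross degrees. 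Throughout, the matched-pair algebra axioms (AM1)--(AM4), coalgebra axioms (MC1)--(MC4), the Hopf-bimodule conditions on each side, and the internal braided conditions (BB1)--(BB4) soak up every ``self-interaction'' term, leaving exactly the cross terms, which assemble into the sixteen double matched pair identities; reading each bidegree equality as a biconditional yields the ``if and only if''.

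The obstacle I expect is bookkeeping rather than ideas: the expansions are long, and the subtlety is that for mixed arguments the bracket and the product each carry both an $A$- and an $H$-component, so a single test pair feeds terms simultaneously into several (DM$i$). The disciplined route is to fix, for each of the four argument types and each of the two compatibilities, the resulting equation in each of the four tensor bidegrees, cancel the terms controlled by (BB1)--(BB4), (AM1)--(AM4) and (MC1)--(MC4), and confirm that what survives is exactly one of the listed double matched pair conditions, with no residue. Verifying the absence of any leftover term is the only place where genuine care is required.
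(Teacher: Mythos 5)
Your proposal is correct and follows essentially the same route as the paper: the left-symmetric algebra and coalgebra structures come for free from the matched-pair results, and the two bialgebra compatibilities \eqref{bialg01}--\eqref{bialg02} are verified by expanding both sides, with the terms absorbed by the hypotheses (BB1)--(BB4) and the residual cross terms yielding exactly (DM1)--(DM16) of Definition \ref{def:dmp}. Your organization by homogeneous argument types and tensor bidegree is simply a tidier bookkeeping of the paper's single all-at-once expansion for general $(a,x),(b,y)$, and your assignment of each condition to its bidegree and argument type matches the paper's list exactly.
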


\begin{proof} Simply,  we check the first compatibility condition $\Delta([(a, x), (b, y)])=\Delta(a, x) \cdot(b, y)+(a, x) \cdot \Delta(b, y)+(a, x) \bullet \Delta(b, y)-\Delta(b, y ) \cdot(a, x)-(b, y ) \cdot \Delta(a, x)-( b, y) \bullet \Delta(a, x)$.
The left hand side is equal to
\begin{eqnarray*}
&&\Delta([(a, x), (b, y)])\\
&=&\Delta((a, x) (b, y))-\Delta((b, y) (a, x))\\
&=&\Delta(a b+x \ppr b+a\ppl y , x y+x \trl b+a\trr y )\\
&&-\Delta(ba+y \ppr a+b\ppl x , yx+y \trl a+b\trr x )\\
&=&\Delta_A(a b)+\phi(a b)+\psi(a b)+\Delta_A(x \ppr b)+\phi(x \ppr b)+\psi(x \ppr b)\\
&&+\Delta_{A}(a\ppl y)+\phi(a\ppl y)+\psi(a\ppl y)+\Delta_{H}(x y)+\rho(x y)+\gamma(x y)\\
&&+\Delta_{H}(x \trl b)+\rho(x \trl b)+\gamma(x \trl b)+\Delta_{H}(a \trr y)+\rho(a \trr y)+\gamma(a \trr y)\\
&&-\Delta_A(ba)-\phi(ba)-\psi(ba)-\Delta_A(y \ppr a)-\phi(y \ppr a)-\psi(y \ppr a)\\
&&-\Delta_{A}(b\ppl x)-\phi(b\ppl x)-\psi(b\ppl x)-\Delta_{H}(yx)-\rho(yx)-\gamma( y x)\\
&&-\Delta_{H}(y \trl a)-\rho(y \trl a)-\gamma(y \trl a)-\Delta_{H}(b \trr x)-\rho(b \trr x)-\gamma(b \trr x),
\end{eqnarray*}
and the right hand side is equal to
\begin{eqnarray*}
&&\Delta(a, x) \cdot(b, y)+(a, x) \cdot \Delta(b, y)+(a, x) \bullet \Delta(b, y)\\
&&-\Delta(b, y ) \cdot(a, x)-(b, y ) \cdot \Delta(a, x)-( b, y) \bullet \Delta(a, x)\\
&&=a_{1} \otimes a_{2} b+a_{1} \otimes\left(a_{2}\ppl y\right)+a_{1} \otimes\left(a_{2} \trr y\right)
 +a_{(-1)} \otimes a_{(0)} b+a_{(-1)} \otimes\left(a_{(0)}\ppl y\right)\\&&+a_{(-1)} \otimes\left(a_{(0)} \trr y\right)
 +a_{(0)}\otimes\left(a_{(1)} \ppr b\right)+a_{(0)} \otimes\left(a_{(1)} \trl b\right)+a_{(0)} \otimes a_{(1)}y
 +x_{1} \otimes\left(x_{2}\ppr b\right)\\&&+x_{1} \otimes\left(x_{2} \trl b\right)+x_{1} \otimes x_{2} y
 +x\boi\ot(x\boo\ppr b)+x\boi\ot(x\boo\trl b)+x\boi\ot x\boo y\\
 &&+x\boo\ot x\bi b+x\boo\ot (x\bi\ppl y)+x\boo\ot (x\bi\trr y)
 +a b_{1} \otimes b_{2}+\left(x \ppr b_{1}\right) \otimes b_{2}\\&&+\left(x \trl b_{1}\right) \otimes b_{2}
 +(a\ppl b_{(-1)}) \otimes b_{(0)}+(a \trr b_{(-1)}) \otimes b_{(0)}+x b_{(-1)} \otimes b_{(0)}
 +a b_{(0)} \otimes b_{(1)}\\&&+(x \ppr b_{(0)}) \otimes b_{(1)}+(x \trl b_{(0)}) \otimes b_{(1)}
 +\left(a\ppl y_{1}\right) \otimes y_{2}+\left(a \trr y_{1}\right) \otimes y_{2}+x y_{1}\ot y_{2} \\
&&+ay\boi\ot y\boo+(x\ppr y\boi)\ot y\boo+(x\trl y\boi)\ot y\boo
 +(a\ppl y\boo)\to y\bi+(a\trr y\boo)\ot y\bi\\&&+xy\boo\ot y\bi
 + b_{1}\otimes a b_{2}+b_{1} \otimes \left(x \ppr b_{2}\right)+ b_{1}\otimes \left(x \trl b_{2}\right)
 + b_{(0)}\otimes (a\ppl b_{(1)})\\&&+ b_{(0)}\otimes (a \trr b_{(1)})+ b_{(0)}\otimes x b_{(1)}
 +b_{(-1)}\otimes a b_{(0)} + b_{(-1)}\otimes(x \ppr b_{(0)})+b_{(-1)}\otimes (x \trl b_{(0)})\\
 &&+ y_{1}\otimes\left(a\ppl y_{2}\right)+ y_{1}\otimes \left(a \trr y_{2}\right)+y_{1}\ot x y_{2}
 +y\boo\ot ay\bi+ y\boo\ot(x\ppr y\bi)\\&&+ y\boo\ot(x\trl y\bi)
 + y\boi\ot(a\ppl y\boo)+y\boi\ot (a\trr y\boo)+y\boi\ot xy\boo \\
 &&-b_{1} \otimes b_{2} a-b_{1} \otimes\left(b_{2}\ppl x\right)-b_{1} \otimes\left(b_{2} \trr x\right)
 -b_{(-1)} \otimes b_{(0)} a-b_{(-1)} \otimes\left(b_{(0)}\ppl x\right)\\&&-b_{(-1)} \otimes\left(b_{(0)} \trr x\right)
 -b_{(0)}\otimes\left(b_{(1)} \ppr a\right)-b_{(0)} \otimes\left(b_{(1)} \trl a\right)-b_{(0)} \otimes b_{(1)}x
 -y_{1} \otimes\left(y_{2}\ppr a\right)\\&&-y_{1} \otimes\left(y_{2} \trl a\right)-y_{1} \otimes y_{2} x
 -y\boi\ot(y\boo\ppr a)-y\boi\ot(y\boo\trl a)-y\boi\ot y\boo x\\
 &&-y\boo\ot y\bi a-y\boo\ot (y\bi\ppl x)-y\boo\ot (y\bi\trr x)
  -b a_{1} \otimes a_{2}-\left(y \ppr a_{1}\right) \otimes a_{2}\\&&-\left(y \trl a_{1}\right) \otimes a_{2}
-(b\ppl a_{(-1)}) \otimes a_{(0)}-(b \trr a_{(-1)}) \otimes a_{(0)}-y a_{(-1)} \otimes a_{(0)}
 -b a_{(0)} \otimes a_{(1)}\\&&-(y \ppr a_{(0)}) \otimes a_{(1)}-(y \trl a_{(0)}) \otimes a_{(1)}
-\left(b\ppl x_{1}\right) \otimes x_{2}-\left(b \trr x_{1}\right) \otimes x_{2}-y x_{1}\ot x_{2} \\
&&-bx\boi\ot x\boo-(y\ppr x\boi)\ot x\boo-(y\trl x\boi)\ot x\boo
 -(b\ppl x\boo)\ot x\bi-(b\trr x\boo)\ot x\bi\\&&-yx\boo\ot x\bi
 - a_{1}\otimes b a_{2}-a_{1} \otimes \left(y \ppr a_{2}\right)- a_{1}\otimes \left(y \trl a_{2}\right)
 - a_{(0)}\otimes (b\ppl a_{(1)})\\&&- a_{(0)}\otimes (b \trr a_{(1)})- a_{(0)}\otimes y a_{(1)}
 -a_{(-1)}\otimes b a_{(0)} - a_{(-1)}\otimes(y \ppr a_{(0)})-a_{(-1)}\otimes (y \trl a_{(0)})\\
 &&- x_{1}\otimes\left(b\ppl x_{2}\right)- x_{1}\otimes \left(b \trr x_{2}\right)-x_{1}\ot y x_{2}
 -x\boo\ot bx\bi- x\boo\ot(y\ppr x\bi)\\&&- x\boo\ot(y\trl x\bi)
 - x\boi\ot(b\ppl x\boo)-x\boi\ot (b\trr x\boo)-x\boi\ot yx\boo.
\end{eqnarray*}
Compare  both the two sides, we will find  the double matched pair conditions (CDM1)--(CDM8) in Definition \ref{def:dmp}.

Now we continue to check the second compatibility condition $(\id-\tau)(\Delta( (a, x), (b, y) ))=(\id-\tau)(\Delta(a, x) \cdot(b, y)+(a, x) \cdot \Delta(b, y)+(a, x) \bullet \Delta(b, y))$,  the left hand side is equal to
\begin{eqnarray*}
&&(\id-\tau)(\Delta( (a, x), (b, y) ))\\
&=&(\id-\tau)\Delta(a b+x \ppr b+a\ppl y, x y+x \trl b+a\trr y)\\
&=& \Delta_A(a b)+\phi(a b)+\psi(a b)+\Delta_A(x \ppr b)+\phi(x \ppr b)+\psi(x \ppr b)\\
&&+\Delta_{A}(a\ppl y)+\phi(a\ppl y)+\psi(a\ppl y)+\Delta_{H}(x y)+\rho(x y)+\gamma(x y)\\
&&+\Delta_{H}(x \trl b)+\rho(x \trl b)+\gamma(x \trl b)+\Delta_{H}(a \trr y)+\rho(a \trr y)+\gamma(a \trr y)\\
&&-\tau\Delta_A(a b)-\tau\phi(a b)-\tau\psi(a b)-\tau\Delta_A(x \ppr b)-\tau\phi(x \ppr b)-\tau\psi(x \ppr b)\\
&&-\tau\Delta_{A}(a\ppl y)-\tau\phi(a\ppl y)-\tau\psi(a\ppl y)-\tau\Delta_{H}(x y)-\tau\rho(x y)-\tau\gamma(x y)\\
&&-\tau\Delta_{H}(x \trl b)-\tau\rho(x \trl b)-\tau\gamma(x \trl b)-\tau\Delta_{H}(a \trr y)-\tau\rho(a \trr y)-\tau\gamma(a \trr y),
\end{eqnarray*}
and the right hand side is equal to
\begin{eqnarray*}
&&(\id-\tau)\Delta(a, x) \cdot(b, y)+(a, x) \cdot \Delta(b, y)+(a, x) \bullet \Delta(b, y)\\
&=&(\id-\tau)\Big(a_{1} \otimes a_{2} b+a_{1} \otimes\left(a_{2}\ppl y\right)+a_{1} \otimes\left(a_{2} \trr y\right)+a_{(-1)} \otimes a_{(0)} b+a_{(-1)} \otimes\left(a_{(0)}\ppl y\right)\\&&+a_{(-1)} \otimes\left(a_{(0)} \trr y\right)
 +a_{(0)}\otimes\left(a_{(1)} \ppr b\right)+a_{(0)} \otimes\left(a_{(1)} \trl b\right)+a_{(0)} \otimes a_{(1)}y
 +x_{1} \otimes\left(x_{2}\ppr b\right)\\&&+x_{1} \otimes\left(x_{2} \trl b\right)+x_{1} \otimes x_{2} y
 +x\boi\ot(x\boo\ppr b)+x\boi\ot(x\boo\trl b)+x\boi\ot x\boo y\\
 &&+x\boo\ot x\bi b+x\boo\ot (x\bi\ppl y)+x\boo\ot (x\bi\trr y)
 +a b_{1} \otimes b_{2}+\left(x \ppr b_{1}\right) \otimes b_{2}\\&&+\left(x \trl b_{1}\right) \otimes b_{2}
 +(a\ppl b_{(-1)}) \otimes b_{(0)}+(a \trr b_{(-1)}) \otimes b_{(0)}+x b_{(-1)} \otimes b_{(0)}
 +a b_{(0)} \otimes b_{(1)}\\&&+(x \ppr b_{(0)}) \otimes b_{(1)}+(x \trl b_{(0)}) \otimes b_{(1)}
 +\left(a\ppl y_{1}\right) \otimes y_{2}+\left(a \trr y_{1}\right) \otimes y_{2}+x y_{1}\ot y_{2} \\
&&+ay\boi\ot y\boo+(x\ppr y\boi)\ot y\boo+(x\trl y\boi)\ot y\boo
 +(a\ppl y\boo)\ot y\bi+(a\trr y\boo)\ot y\bi\\&&+xy\boo\ot y\bi
 + b_{1}\otimes a b_{2}+b_{1} \otimes \left(x \ppr b_{2}\right)+ b_{1}\otimes \left(x \trl b_{2}\right)
 + b_{(0)}\otimes (a\ppl b_{(1)})\\&&+ b_{(0)}\otimes (a \trr b_{(1)})+ b_{(0)}\otimes x b_{(1)}
 +b_{(-1)}\otimes a b_{(0)} + b_{(-1)}\otimes(x \ppr b_{(0)})+b_{(-1)}\otimes (x \trl b_{(0)})\\
 &&+ y_{1}\otimes\left(a\ppl y_{2}\right)+ y_{1}\otimes \left(a \trr y_{2}\right)+y_{1}\ot x y_{2}
 +y\boo\ot ay\bi+ y\boo\ot(x\ppr y\bi)\\&&+ y\boo\ot(x\trl y\bi)
 + y\boi\ot(a\ppl y\boo)+y\boi\ot (a\trr y\boo)+y\boi\ot xy\boo\Big).
\end{eqnarray*}
Compare  both the two sides, we will find  the double matched pair conditions (CDM9)--(CDM16) in Definition \ref{def:dmp}. Thus the proof is completed.
\end{proof}

\subsection{Cocycle bicrossproduct   left-symmetric bialgebras}

In this section, we construct cocycle bicrossproduct   left-symmetric bialgebras, which is a generalization of double cross biproduct.

Let $A, H$ be both  left-symmetric algebras and  left-symmetric coalgebras.   For any $a, b\in A$, $x, y\in H$,  we denote maps
\begin{align*}
&\sigma: H\otimes H \to A,\quad \theta: A\otimes A \to H,\\
&P: A  \to H\otimes H,\quad  Q: H \to A\otimes A,
\end{align*}
by
\begin{eqnarray*}
&& \sigma (x, y)  \in A, \quad \theta(a, b) \in H,\\
&& P(a)=\sum a\ppi\ot  a\pii, \quad Q(x) = \sum x\qi \ot x\qii.
\end{eqnarray*}

A bilinear map $\si: H\ot H\to A$ is called a cocycle on $H$ if
\begin{enumerate}
\item[(CC1)] $\sigma(x y, z)-{\sigma}(x, y z)+\sigma(x, y)\ppl z-x \ppr \sigma(y, z)\\
=\sigma(yx, z)-\sigma(y, xz)+\sigma(y, x)\ppl z-y\ppr\si(x, z).$
\end{enumerate}

A bilinear map $\theta: A\ot A\to H$ is called a cocycle on $A$ if
\begin{enumerate}
\item[(CC2)] $\theta(a b, c)-\theta(a, b c)+\theta(a, b) \triangleleft c-a\trr \theta(b, c)=\theta(ba, c)-\theta(b, a c)+\theta(b, a) \triangleleft c-b\trr \theta(a, c).$
\end{enumerate}

A bilinear map $P: A\to H\ot H$ is called a cycle on $A$ if
\begin{enumerate}
\item[(CC3)]  $\Delta_H(a\ppi)\ot a\pii-a\ppi\ot \Delta_H(a\pii)+P(a\lmoo)\ot a\mi-a\lmoi\ot P(a\lmoo)\\
=\tau_{12}\left(\Delta_H(a\ppi)\ot a\pii-a\ppi\ot \Delta_H(a\pii)+P(a\lmoo)\ot a\mi-a\lmoi\ot P(a\lmoo)\right)$.
\end{enumerate}

A bilinear map $Q: H\to A\ot A$ is called a cycle on $H$ if
\begin{enumerate}
\item[(CC4)]  $\Delta_A(x\qi)\ot x\qii-x\qi\ot \Delta_A(x\qii)+Q(x\boo)\ot x\bi-x\boi\ot Q(x\boo)\\
=\tau_{12}\left( \Delta_A(x\qi)\ot x\qii-x\qi\ot \Delta_A(x\qii)+Q(x\boo)\ot x\bi-x\boi\ot Q(x\boo)\right)$.
\end{enumerate}

In the following definitions, we introduce the concept of cocycle
 left-symmetric algebras and cycle  left-symmetric coalgebras, which are  in fact not really ordinary  left-symmetric algebras and  left-symmetric coalgebras, but generalized ones.

\begin{definition}
(i): Let $\si$ be a cocycle on a vector space  $H$ equipped with a product $H \ot H \to H$, satisfying the following cocycle  identity:
\begin{enumerate}
\item[(CC5)] $(x y) z-x(y z)+\sigma(x, y) \trr z-x \triangleleft \sigma(y, z)=(yx) z-y(x z)+\sigma(y, x) \trr z-y \triangleleft \sigma(x, z)$.
\end{enumerate}
Then  $H$ is called a $\si$-left-symmetric algebra which is denoted by $(H, \sigma)$.

(ii): Let $\theta$ be a cocycle on a vector space $A$  equipped with a product $ A \ot A \to A$, satisfying the  following cocycle  identity:
\begin{enumerate}
\item[(CC6)] $(a b) c-a(b c)+\theta(a,  b) \ppr c-a\ppl \theta(b, c)=(ba) c-b(a c)+\theta(b, a) \ppr c-b\ppl \theta(a, c)$.
\end{enumerate}
Then  $A$ is called a $\theta$-left-symmetric algebra which is denoted by $(A, \theta)$.

(iii) Let $P$ be a cycle on a vector space  $H$ equipped with a coproduct $\Delta: H \to H \ot H$, satisfying the  following cycle  identity:
\begin{enumerate}
\item[(CC7)] $\Delta_H(x\li)\ot x\lii-x_1\ot \Delta_H(x_2)+ P(x\boi) \ot x\boo-x\boo\ot P(x\bi)\\
=\tau_{12} \left(\Delta_H(x\li)\ot x\lii-x_1\ot \Delta_H(x_2)+ P(x\boi) \ot x\boo-x\boo\ot P(x\bi) \right)$.
\end{enumerate}
\noindent Then  $H$ is called a $P$-left-symmetric coalgebra which is denoted by $(H, P)$.

(iv) Let $Q$ be a cycle on a vector space  $A$ equipped with a coproduct $\Delta: A \to A \ot A$, satisfying the following cycle  identity:
\begin{enumerate}
\item[(CC8)] $\Delta_A(a\li)\ot a\lii-a\li\ot \Delta_A(a\lii)+Q(a\lmoi)\ot a\lmoo-a\mo\ot Q(a\mi)\\
=\tau_{12} \left( \Delta_A(a\li)\ot a\lii-a\li\ot \Delta_A(a\lii)+Q(a\lmoi)\ot a\lmoo-a\mo\ot Q(a\mi) \right)$.
\end{enumerate}
\noindent Then  $A$ is called a $Q$-left-symmetric coalgebra which is denoted by $(A, Q)$.
\end{definition}

\begin{theorem}
Let $A$ be a $\theta$-left-symmetric algebra and $H$ a $\sigma$-left-symmetric algebra, where $\si: H\ot H\to A$ is a cocycle on $H$, $\theta: A\ot A\to H$ is a cocycle on $A$.
If we define $E=A_{\sigma}\#_{\theta} H$ as the vector space $A\oplus H$ with the   product
\begin{align}
(a, x)(b, y)=\big(ab+x\ppr b+a\ppl y+\sigma(x, y), \, xy+x\trl b+a\trr y+\theta(a, b)\big).
\end{align}
Then $E=A_{\sigma}\#_{\theta} H$ forms a  left-symmetric algebra,  which we call the cocycle cross product left-symmetric algebra, if and only if  the following conditions are satisfied:
\begin{enumerate}
\item[(CP1)] $[a, b] \trr x +(\theta(a, b) -\theta(b, a)) x=a\trr (b\trr x)-b\trr (a\trr x)+\theta(a, b\ppl x)-\theta(b, a\ppl x)$,
\item[(CP2)] $x \trl (a b)+x\theta(a, b)=a \trr (x\trl b)  +(x \trl a-a\trr x) \trl b+\theta(x \ppr a -a \ppl x, b)+\theta(a,x\ppr b)$,
\item[(CP3)] $ a \trr (x y)+\theta(a, \sigma(x, y))=(a \trr x-x\trl a) y+(a\ppl x-x\ppr a) \trr y+x\trl (a\ppl y)+x(a\trr y)$,
\item[(CP4)] $[x, y] \trl a+\theta(\sigma(x, y)-\sigma(y, x), a) =x \trl(y \ppr a)-y \trl(x \ppr a)+x(y \trl a)-y(x \trl a)$,
\item[(CP5)] $[x, y] \ppr a +(\sigma(x, y)  -\sigma(y, x)) a=x\ppr(y\ppr a)-y\ppr(x\ppr a)+\sigma(x, y\trl a)-\sigma(y, x\trl a)$,
\item[(CP6)] $a\ppl (x y)+a \sigma(x, y)=x\ppr (a\ppl y) +(a\ppl x-x\ppr a)\ppl y+\sigma(a \trr x-x\trl a, y)+\sigma(x, a\trr y)$,
\item[(CP7)] $x \ppr(a b)+\sigma(x, \theta(a, b))=(x\ppr a-a\ppl x) b+(x\trl a-a\trr x) \ppr b+a(x\ppr b)+a\ppl(x\trl b)$,
\item[(CP8)] $[a, b]\ppl x +\sigma(\theta(a, b)-\theta(b, a), x)=a(b\ppl x)+a\ppl (b \trr x)-b(a\ppl x)-b\ppl(a\trr x)$.
\end{enumerate}
And  $(A, H, \sigma, \theta)$ satisfying   above conditions is called  a  \emph{cocycle cross product system }.
\end{theorem}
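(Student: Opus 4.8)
The plan is to verify the left-symmetric identity
$$((a,x)(b,y))(c,z)-(a,x)((b,y)(c,z))=((b,y)(a,x))(c,z)-(b,y)((a,x)(c,z))$$
directly on $E=A\oplus H$ and to show that it is equivalent to (CP1)--(CP8) modulo the standing hypotheses. Writing $\mathrm{assoc}(u,v,w):=(uv)w-u(vw)$, the identity reads $\mathrm{assoc}(u,v,w)=\mathrm{assoc}(v,u,w)$. Since the product is bilinear, I would use multilinearity in all three slots to reduce to the case where each argument is either purely in $A$, of the form $(a,0)$, or purely in $H$, of the form $(0,x)$. This gives $2^{3}=8$ cases; as the identity only symmetrizes the first two arguments, the relevant datum is the multiset $\{\text{type of slot }1,\text{type of slot }2\}$ together with the type of slot $3$, leaving six essentially distinct cases, and in each case the resulting identity in $E$ splits into an $A$-component and an $H$-component.

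First I would dispose of the two pure cases. For three $A$-arguments one has $(a,0)(b,0)=(ab,\theta(a,b))$, and unwinding the iterated products shows that the $A$-component of the identity is exactly the $\theta$-left-symmetric condition (CC6) while the $H$-component is exactly the cocycle condition (CC2); both hold by hypothesis. Dually, for three $H$-arguments the $H$-component is the $\sigma$-left-symmetric condition (CC5) and the $A$-component is the cocycle condition (CC1). Thus the pure cases contribute nothing beyond the standing assumptions.

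The content lies in the four mixed cases. For the type $\{A,A\},H$ with arguments $(a,0),(b,0),(0,z)$ one computes $(a,0)(b,0)=(ab,\theta(a,b))$ and $(b,0)(0,z)=(b\ppl z,\,b\trr z)$, and comparing the two sides (which differ by the swap $a\leftrightarrow b$) yields precisely (CP8) in the $A$-component and (CP1) in the $H$-component, the cross-term $\sigma(\theta(a,b),z)$ landing in $A$ and $(\theta(a,b)-\theta(b,a))z$ in $H$. For the type $\{A,H\},A$ with arguments $(a,0),(0,y),(c,0)$ (swapped on the right to $(0,y),(a,0),(c,0)$) the same bookkeeping gives (CP7) and (CP2). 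The remaining two mixed types $\{A,H\},H$ and $\{H,H\},A$ produce (CP3),(CP6) and (CP4),(CP5) respectively. I would obtain these last two from the first two without fresh computation, by invoking the formal duality $A\leftrightarrow H$, $\ppr\leftrightarrow\trr$, $\ppl\leftrightarrow\trl$, $\sigma\leftrightarrow\theta$ that interchanges the two defining components of the product on $E$ and under which (CP1)$\leftrightarrow$(CP5), (CP2)$\leftrightarrow$(CP6), (CP3)$\leftrightarrow$(CP7), (CP4)$\leftrightarrow$(CP8).

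The main obstacle is entirely bookkeeping: each mixed case expands into roughly a dozen terms on each side, and the delicate point is to project cleanly onto $A$ and $H$ and to track the cocycle cross-terms—expressions such as $\sigma(\theta(a,b),z)$, $\theta(a\ppl x,b)$, and $\theta(a,\sigma(x,y))$—into the correct component. Once the eight component equations arising from the mixed cases are matched with (CP1)--(CP8) and the pure cases are matched with the hypotheses (CC1),(CC2),(CC5),(CC6), the stated equivalence, and hence the theorem, follows.
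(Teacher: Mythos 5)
Your proposal is correct: I checked each of your case-to-condition matchings against the expanded products, and they all hold. Writing the product componentwise, the case $\{A,A\},H$ with arguments $(a,0),(b,0),(0,z)$ does give exactly (CP8) in the $A$-component and (CP1) in the $H$-component; $\{A,H\},A$ gives (CP7) and (CP2); $\{A,H\},H$ gives (CP6) and (CP3); $\{H,H\},A$ gives (CP5) and (CP4); and the two pure cases reduce to the standing hypotheses (CC6),(CC2) and (CC5),(CC1). Your duality is also genuine: the flip $(a,x)\mapsto(x,a)$ identifies $A_{\sigma}\#_{\theta}H$ with the analogous product built from the swapped data $\ppr\leftrightarrow\trr$, $\ppl\leftrightarrow\trl$, $\sigma\leftrightarrow\theta$, under which the pairings (CP1)$\leftrightarrow$(CP5), (CP2)$\leftrightarrow$(CP6), (CP3)$\leftrightarrow$(CP7), (CP4)$\leftrightarrow$(CP8) are exactly right.

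Your route differs from the paper's in organization rather than in substance. The paper verifies the same identity by a single monolithic expansion: it writes out $((a,x)(b,y))(c,z)-(a,x)((b,y)(c,z))$ and its $a\leftrightarrow b$, $x\leftrightarrow y$ swap for fully general mixed elements, producing two large sums of terms, and then asserts that equality of the two sides is equivalent to (CP1)--(CP8) (implicitly letting the hypotheses (CC1), (CC2), (CC5), (CC6) absorb the purely-$A$ and purely-$H$ terms). Your trilinearity reduction to six cases buys two things the paper's presentation lacks: it makes visible which condition is forced by which combination of argument types, and it makes explicit---rather than implicit---that the pure cases contribute nothing beyond the standing hypotheses, which is needed for a clean ``if and only if.'' The duality argument then halves the computation, something the paper never exploits (it recomputes all eight conditions). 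The paper's brute-force expansion has the mild advantage of being uniform with the style of the other proofs in the paper (e.g.\ the coalgebra and bialgebra compatibility checks, where a case reduction is less natural because the coproduct of a single element already mixes components), but as a proof of this particular theorem your organization is tighter and easier to audit.
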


\begin{proof} We have to check
$$((a, x)(b, y))(c, z)-(a, x)((b, y)(c, z))=((b, y)(a, x))(c, z)-(b, y)((a, x)(c, z)).$$
By direct computations, the right hand side is equal to
\begin{eqnarray*}
&&\big((b, y)(a, x)\big)(c, z)-(b, y)\big((a, x)(c, z)\big)\\
&=&\big(b a+y \ppr a+b\ppl x+\sigma(y, x), y x+y \trl a+b\trr x+\theta(b, a)\big)(c, z)\\
&&-\left(b, y\right)\big(a c+x\ppr c+a\ppl z+\sigma(x, z), x z+x \trl c+a \trr z+\theta(a, c)\big)\\
&=&\Big( (b a) c+(y\ppr a) c+(b\ppl x) c+\sigma(y, x) c+(y x) \ppr c
+(y \trl a) \ppr c+(b \trr x)\ppr c\\&&+\theta(b, a) \ppr c
 +(b a) \ppl z+(y \ppr a)\ppl z+(b\ppl x)\ppl z+\sigma(y ,x)\ppl z
 +\sigma(y x, z)\\&&+\sigma(y \trl a, z)+\sigma(b \trr x, z) +\sigma(\theta(b, a), z),
 \quad (y x) z+(y\trl a) z+(b\trr x) z+\theta(b, a) z\\&&+(y x) \trl c
+(y \trl a) \trl c+(b \trr x) \trl c+\theta(b, a) \trl c
 +(b a) \trr z+(y \ppr a) \trr z+\\&&(b\ppl x)\trr z+\sigma(y, x) \trr z
+\theta(b a, c)+\theta(y \ppr a, c)+\theta(b\ppl x, c)+\theta(\sigma(y, x), c)\Big)\\
&&-\Big(b(a c)+b(x \ppr c)+b(a\ppl z)+b(\sigma(x, z))+y \ppr(a c)
 +y \ppr(x\ppr c)+y \ppr(a\ppl z)\\&&+y \ppr \sigma(x, z)
 +b\ppl (x z)+b\ppl (x \trl c)+b\ppl (a \trr z)+b\ppl \theta(a, c)
 +\sigma(y, x z)+\sigma(y, x \trl c)\\&&+\sigma(y, a \trr z)+\sigma(y, \theta(a, c)),
 \quad y(x z)+y(x \trl c)+y(a \trr z)+y \theta(a, c)+y \trl (a c) \\
&&+y \trl(x \ppr c)+y\trl (a\ppl z)+y \trl \sigma(x, z )
 +b \trr(x z)+b \trr(x\trl c)+b \trr(a \trr z)\\&&+b \trr \theta(a, c)
 +\theta(b, a c)+\theta(b, x \ppr c)+\theta(b, a\ppl z)+\theta(b, \sigma(x, z))\Big),
\end{eqnarray*}
and the left hand side is equal to
\begin{eqnarray*}
&&\big((a, x)(b, y)\big)(c, z)-(a, x)\big((b, y)(c, z)\big)\\
&=&\big(a b+x \ppr b+a\ppl y+\sigma(x, y), x y+x \trl b+a\trr y+\theta(a, b)\big)(c, z)\\
&&-\left(a, x\right)\big(b c+y\ppr c+b\ppl z+\sigma(y, z), y z+y \trl c+b \trr z+\theta(b, c)\big)\\
&=&\Big( (a b) c+(x\ppr b) c+(a\ppl y) c+\sigma(x, y) c+(x y) \ppr c
+(x \trl b) \ppr c+(a \trr y)\ppr c\\&&+\theta(a, b) \ppr c
 +(a b) \ppl z+(x \ppr b)\ppl z+(a\ppl y)\ppl z+\sigma(x,  y)\ppl z
 +\sigma(x y, z)\\&&+\sigma(x \trl b, z)+\sigma(a \trr y, z) +\sigma(\theta(a, b) , z),
 \quad (x y) z+(x\trl b) z+(a\trr y) z+\theta(a, b) z\\&&+(x y) \trl c
 +(x \trl b) \trl c+(a \trr y) \trl c+\theta(a, b) \trl c
 +(a b) \trr z+(x \ppr b) \trr z+(a\ppl y)\trr z\\&&+\sigma(x, y) \trr z
 +\theta(a b, c)+\theta(x \ppr b, c)+\theta(a\ppl y, c)+\theta(\sigma(x, y), c)\Big)\\
&&-\Big(a(b c)+a(y \ppr c)+a(b\ppl z)+a(\sigma(y, z))+x \ppr(b c)
 +x \ppr(y\ppr c)+x \ppr(b\ppl z)\\&&+x \ppr \sigma(y, z)
 +a\ppl (y z)+a\ppl (y \trl c)+a\ppl (b \trr z)+a\ppl \theta(b, c)
 +\sigma(x, y z)\\&&+\sigma(x, y \trl c)+\sigma(x, b \trr z)+\sigma(x, \theta(b, c)),
 \quad x(y z)+x(y \trl c)+x(b \trr z)+x \theta(b, c)\\&&+x \trl (b c)
 +x \trl(y \ppr c)+x\trl (b\ppl z)+x \trl \sigma(y, z)
 +a \trr(y z)+a \trr(y\trl c)\\&&+a \trr(b \trr z)+a \trr \theta(b, c)
 +\theta(a, b c)+\theta(a, y \ppr c)+\theta(a, b\ppl z)+\theta(a, \sigma(y, z))\Big).
\end{eqnarray*}
 Thus the two sides are equal to each other if and only if (CP1)--(CP8) hold.
\end{proof}

\begin{definition}
A  \emph{cycle cross coproduct system }$(A, H, P, Q)$ is a pair of   $P$-left-symmetric coalgebra $A$ and  $Q$-left-symmetric coalgebra $H$,  where $P: A\to H\ot H$ is a cycle on $A$,  $Q: H\to A\ot A$ is a cycle over $H$ such that following conditions are satisfied:
\begin{enumerate}
\item[(CCP1)] $\phi\left(a_{1}\right) \otimes a_{2}+\gamma\left(a_{(-1)}\right) \otimes a_{(0)}-a_{(-1)} \otimes \Delta_{A}\left(a_{(0)}\right)-a\ppi \otimes Q\left(a\pii\right)\\
    =\tau_{12}\left(\psi\left(a_{1}\right) \otimes a_{2}+\rho\left(a_{(-1)}\right) \otimes a_{(0)}-a_{1} \otimes \phi(a_{2})-a_{(0)} \otimes \gamma(a_{(1)})\right)$,
\item[(CCP2)] $\Delta_{A}\left(a_{(0)}\right) \otimes a_{(1)}+Q\left(a\ppi\right) \otimes a\pii-a_{1} \otimes \psi\left(a_{2}\right)-a_{(0)} \otimes \rho\left(a_{(1)}\right)\\
    =\tau_{12}\left(\Delta_{A}\left(a_{(0)}\right) \otimes a_{(1)}+Q\left(a\ppi\right) \otimes a\pii-a_{1} \otimes \psi\left(a_{2}\right)-a_{(0)} \otimes \rho\left(a_{(1)}\right)\right)$,
\item[(CCP3)] $\rho\left(x_{1}\right) \otimes x_{2}+\psi\left(x_{[-1]}\right) \otimes x_{[0]}-x_{[-1]} \otimes \Delta_{H}\left( x_{[0]}\right)-x\qi \otimes P\left(x\qii\right)  \\
    =\tau_{12}\left(\gamma\left(x_{1}\right) \otimes x_{2}+\phi\left(x_{[-1]}\right) \otimes x_{[0]}-x_{[0]} \otimes \psi\left(x_{[1]}\right)-x_{1} \otimes \rho(x_{2})\right)$,
\item[(CCP4)] $\Delta_{H}(x\boo)\ot x\bi+P(x\qi)\ot x\qii-x\boo\ot \phi(x\bi)-x_1\ot \gamma(x_2)\\
=\tau_{12}\left(\Delta_{H}(x\boo)\ot x\bi+P(x\qi)\ot x\qii-x\boo\ot \phi(x\bi)-x_1\ot \gamma(x_2) \right)$,
\item[(CCP5)] $\Delta_{H}\left(a_{(-1)}\right) \otimes a_{(0)}+P\left(a_{1}\right) \otimes a_{2}-a_{(-1)} \otimes \phi\left(a_{(0)}\right)-a\ppi\otimes \gamma\left(a\pii\right)\\
    =\tau_{12}\left( \Delta_{H}\left(a_{(-1)}\right) \otimes a_{(0)}+P\left(a_{1}\right) \otimes a_{2}-a_{(-1)} \otimes \phi\left(a_{(0)}\right)-a\ppi\otimes \gamma\left(a\pii\right) \right)$,
\item[(CCP6)] $a_{(0)}\ot \Delta_{H}\left(a_{(1)}\right)+a_{1}\otimes P\left(a_{2}\right)-\psi\left(a_{(0)}\right) \otimes a_{(1)}-\rho\left(a\ppi\right) \otimes a\pii\\
    =\tau_{12}\left(  a_{(-1)} \otimes \psi\left(a_{(0)}\right)+a\ppi \otimes \rho\left(a\pii\right)-\phi\left(a_{(0)}\right) \otimes a_{(1)}-\gamma\left(a\ppi\right) \otimes a\pii \right)$,
\item[(CCP7)] $x_{[-1]} \otimes \rho\left(x_{[0]}\right)+x\qi \otimes \psi\left(x\qii\right)-\Delta_{A}\left(x_{[-1]}\right) \otimes x_{[0]}-Q\left(x_{1}\right) \otimes x_{2}\\
    =\tau_{12}\left( x_{[-1]} \otimes \rho\left(x_{[0]}\right)+x\qi \otimes \psi\left(x\qii\right)-\Delta_{A}\left(x_{[-1]}\right) \otimes x_{[0]}-Q\left(x_{1}\right) \otimes x_{2} \right)$,
\item[(CCP8)] $x\boo\ot\Delta_A(x\bi)+x_1\ot Q(x_2)-\gamma(x\boo)\ot x\bi-\phi(x\qi)\ot x\qii\\
=\tau_{12}\left(x\boi\ot\gamma(x\boo)+x\qi\ot\phi(x\qii)-\rho(x\boo)\ot x\bi-\psi(x\qi)\ot x\qii\right)$.
\end{enumerate}
\end{definition}

\begin{lemma}\label{lem2} Let $(A, H, P, Q)$ be  a  cycle cross coproduct system. If we define $E=A^{P}\# {}^{Q} H$ as the vector
space $A\oplus H$ with the   coproduct
$$\Delta_{E}(a)=(\Delta_{A}+\phi+\psi+P)(a),\quad \Delta_{E}(x)=(\Delta_{H}+\rho+\gamma+Q)(x), $$
that is
$$\Delta_{E}(a)= a\li \ot a\lii+ a\moi \ot a\mo+a\mo\ot a\mi+a\ppi\ot a\pii,$$
$$\Delta_{E}(x)= x\li \ot x\lii+ x\boi \ot x\boo+x\boo \ot x\bi+x\qi\ot x\qii,$$
then  $A^{P}\# {}^{Q} H$ forms a   left-symmetric coalgebra which we will call it the cycle cross coproduct left-symmetric  coalgebra.
\end{lemma}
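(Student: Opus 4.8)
The plan is to verify the defining left-symmetric coalgebra identity
\[
(\Delta_E\ot\id)\Delta_E(u)-(\id\ot\Delta_E)\Delta_E(u)=\tau_{12}\Big((\Delta_E\ot\id)\Delta_E(u)-(\id\ot\Delta_E)\Delta_E(u)\Big)
\]
for every $u\in E=A\oplus H$ by direct expansion. Both sides are linear in $u$, and $\Delta_E$ restricts to $\Delta_A+\phi+\psi+P$ on $A$ and to $\Delta_H+\rho+\gamma+Q$ on $H$, so it suffices to treat a homogeneous $a\in A$ and a homogeneous $x\in H$ separately. Moreover, the $x\in H$ case is the image of the $a\in A$ case under the symmetry interchanging $A\leftrightarrow H$, $(\phi,\psi)\leftrightarrow(\rho,\gamma)$ and $P\leftrightarrow Q$; I would therefore carry out only the $a\in A$ computation in full and invoke this symmetry for $x\in H$.

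For $a\in A$ I would substitute
\[
\Delta_E(a)=a\li\ot a\lii+a\moi\ot a\mo+a\mo\ot a\mi+a\ppi\ot a\pii
\]
and apply $\Delta_E\ot\id$ and $\id\ot\Delta_E$, re-expanding each inner coproduct again through all four component maps. Every resulting term lies in $E^{\ot3}=(A\oplus H)^{\ot3}$, and I would sort the terms by their homogeneous tensor type $(\epsilon_1,\epsilon_2,\epsilon_3)$ with $\epsilon_i\in\{A,H\}$. Since $\tau_{12}$ flips only the first two slots, it sends the type $(\epsilon_1,\epsilon_2,\epsilon_3)$ to $(\epsilon_2,\epsilon_1,\epsilon_3)$; hence the identity decouples into one scalar equation per tensor type, the types with $\epsilon_1=\epsilon_2$ giving self-referential equations of the shape $X=\tau_{12}X$ and the pairs $(A,H,\ast)\leftrightarrow(H,A,\ast)$ giving one coupled equation each.

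The heart of the proof is to match each decoupled equation with a hypothesis. The $A\ot A\ot A$ component collects exactly the $\Delta_A$- and $Q$-terms and is the $Q$-left-symmetric coalgebra axiom (CC8); the $H\ot H\ot H$ component collects the $\Delta_H$- and $P$-terms and is the cycle identity (CC3). The mixed components reproduce the four compatibility conditions attached to $A$: type $A\ot A\ot H$ gives (CCP2), type $H\ot H\ot A$ gives (CCP5), the coupled pair $A\ot H\ot A/H\ot A\ot A$ gives (CCP1), and the coupled pair $A\ot H\ot H/H\ot A\ot H$ gives (CCP6). The symmetric computation for $x\in H$ then produces (CC4), (CC7) and (CCP3), (CCP4), (CCP7), (CCP8). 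Since all of (CC3), (CC4), (CC7), (CC8) and (CCP1)--(CCP8) are built into the definition of a cycle cross coproduct system, the identity holds and $E$ is a left-symmetric coalgebra.

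I expect the sole difficulty to be organisational rather than conceptual: faithfully tracking which of the many terms produced on each side falls into which of the eight homogeneous summands of $E^{\ot3}$, and applying $\tau_{12}$ at the level of these graded pieces before identifying components, so that the coupled type-pairs are treated correctly. The real content is simply that the eight tensor types are exhausted precisely by the listed conditions, with no component left unmatched and none over-determined; once the term-sorting is carried out this is immediate.
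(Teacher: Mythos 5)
Your proposal is correct and follows essentially the same route as the paper's proof: a direct expansion of $(\Delta_E\otimes\id)\Delta_E-(\id\otimes\Delta_E)\Delta_E$ followed by matching the result against the defining conditions (CC3), (CC4), (CC7), (CC8) and (CCP1)--(CCP8) of a cycle cross coproduct system, and your component-by-component identifications (e.g.\ the $A\otimes A\otimes A$ part giving (CC8), the coupled $A\otimes H\otimes A$/$H\otimes A\otimes A$ pair giving (CCP1), and the mirrored list for $x\in H$) are all accurate. Your explicit sorting by tensor type and the $A\leftrightarrow H$, $(\phi,\psi)\leftrightarrow(\rho,\gamma)$, $P\leftrightarrow Q$ symmetry merely organize (and halve) the same computation that the paper carries out in one long expansion.
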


\begin{proof} We have to check $ (\id-\tau_{12})\Big((\Delta_{E} \otimes \id) \Delta_{E}(a, x)-(\id\otimes \Delta_{E}  ) \Delta_{E}(a, x)\Big)=0$. By direct computations, we have that
\begin{eqnarray*}
&& (\Delta_{E} \otimes \id) \Delta_{E}(a, x)-(\id\otimes \Delta_{E}  ) \Delta_{E}(a, x)\\
&=&\Delta_{A}\left(a_{1}\right) \otimes a_{2}+\phi\left(a_{1}\right) \otimes a_{2}+\psi\left(a_{1}\right) \otimes a_{2}+P\left(a_{1}\right) \otimes a_{2}\\
&&+\Delta_{H}\left(a_{(-1)}\right) \otimes a_{(0)}+\rho\left(a_{(-1)}\right) \otimes a_{(0)}+\gamma\left(a_{(-1)}\right) \otimes a_{(0)}+Q\left(a_{(-1)}\right) \otimes a_{(0)}\\
&&+\Delta_{A}\left(a_{(0)}\right) \otimes a_{(1)}+\phi\left(a_{(0)}\right) \otimes a_{(1)}+\psi\left(a_{(0)}\right) \otimes a_{(1)}+P\left(a_{(0)}\right) \otimes a_{(1)}\\
&&+\Delta_{H}\left(a\ppi\right) \otimes a\pii+\rho\left(a\ppi\right) \otimes a\pii+\gamma\left(a\ppi\right) \otimes a\pii+Q\left(a\ppi\right) \otimes a\pii\\
&&+\Delta_{H}\left(x_{1}\right) \ot x_{2}+\rho\left(x_{1}\right) \otimes x_{2}+\gamma\left(x_{1}\right) \otimes x_{2}+Q\left(x_{1}\right) \otimes x_{2}\\
&&+\Delta_{A}\left(x\boi\right) \otimes x\boo+ \phi\left(x\boi\right) \otimes x\boo+\psi\left(x\boi\right)\otimes x\boo+P\left(x\boi\right)\otimes x\boo\\
&&+\Delta_{H}\left(x\boo\right) \otimes x\bi+ \rho\left(x\boo\right) \otimes x\bi+\gamma\left(x\boo\right)\otimes x\bi+Q\left(x\boo\right)\otimes x\bi\\
&&+\Delta_{A}\left(x\qi\right) \otimes x\qii+\phi\left(x\qi\right) \otimes x\qii+\psi\left(x\qi\right) \otimes x\qii+P\left(x\qi\right) \otimes x\qii\\
&&-a_{1} \otimes \Delta_{A}\left(a_{2}\right)-a_{1} \otimes \phi\left(a_{2}\right)-a_{1} \otimes \psi\left(a_{2}\right)-a_{1} \otimes P\left(a_{2}\right) \\
&&-a_{(-1)} \otimes \Delta_{A}\left(a_{(0)}\right)-a_{(-1)} \otimes \phi\left(a_{(0)}\right)-a_{(-1)} \otimes \psi\left(a_{(0)}\right)-a_{(-1)} \otimes P\left(a_{(0)}\right)\\
&&-a_{(0)} \otimes \Delta_{H}\left(a_{(1)}\right)-a_{(0)} \otimes \rho\left(a_{(1)}\right)-a_{(0)} \otimes \gamma\left(a_{(1)}\right)-a_{(0)} \otimes Q\left(a_{(1)}\right)\\
&&-a\ppi \otimes \Delta_{H}\left(a\pii\right)-a\ppi \otimes \rho\left(a\pii\right)-a\ppi \otimes \gamma\left(a\pii\right)-a\ppi\otimes Q\left(a\pii\right)\\
&&-x_{1} \otimes \Delta_{H}\left(x_{2}\right)-x_{1} \otimes \rho\left(x_{2}\right)-x_{1} \otimes \gamma\left(x_{2}\right)-x_{1} \otimes Q\left(x_{2}\right)\\
&&-x\boi \otimes \Delta_{H}\left(x\boo\right)-x\boi\otimes \rho\left(x\boo\right)-x\boi \otimes \gamma\left(x\boo\right)-x\boi\otimes Q\left(\boo\right)\\
&&-x\boo \otimes \Delta_{A}\left(x\bi\right)-x\boo \otimes \phi\left(x\bi\right)-x\boo\otimes \psi\left(x\bi\right)-x\boo \otimes P\left(x\bi\right)\\
&&-x\qi \otimes \Delta_{A}\left(x\qii\right)-x\qi \otimes \phi\left(x\qii\right)-x\qi \otimes \psi\left(x\qii\right)-x\qi \otimes P\left(x\qii\right).
\end{eqnarray*}
Thus the  coproduct  is left symmetric if and only if (CCP1)--(CCP8) hold.
\end{proof}

\begin{definition}\label{cocycledmp}
Let $A, H$ be both  left-symmetric algebras and  left-symmetric coalgebras. If  the following conditions hold:
\begin{enumerate}
\item[(CDM1)]  $\phi([a, b])+\gamma(\theta(a, b) - \theta(b, a))$\\
$=a_{(-1)} \otimes[a_{(0)}, b]+(a \trr b_{(-1)}) \otimes b_{(0)}+a\ppi \otimes \left(a\pii \ppr b\right)$\\
$+\theta\left(a, b_{1}\right) \otimes b_{2}+b_{(-1)}\ot [a,b_{(0)}]+b\ppi \ot (a\ppl b\pii) $\\
$-(b \trr a_{(-1)}) \otimes a_{(0)}-b\ppi \otimes \left(b\pii \ppr a\right)-\theta\left(b, a_{1}\right) \otimes a_{2}-a\ppi \ot (b\ppl a\pii) $,

\item[(CDM2)] $\psi([a, b]) +\rho(\theta(a, b) - \theta( b, a))$\\
$=a b_{(0)} \otimes b_{(1)}+a_{(0)} \otimes\left(a_{(1)} \trl b\right)+ \left(a\ppl b\ppi\right) \otimes b\pii+a_{1} \otimes \theta\left(a_{2}, b\right)$\\
$+b_{(0)}\ot (a\trr b_{(1)})+b_{1}\ot \tht(a, b_{2})-b a_{(0)} \otimes a_{(1)}-b_{(0)} \otimes\left(b_{(1)} \trl a\right)$\\
$- \left(b\ppl a\ppi\right) \otimes a\pii-b_{1} \otimes \theta\left(b_{2}, a\right)-a_{(0)}\ot (b\trr a_{(1)})-a_{1}\ot \tht(b, a_{2})$,

\item[(CDM3)] $\rho([x, y]) +\psi(\sigma(x, y) - \sigma(y, x))$\\
$=x_{[-1]} \otimes [x_{[0]}, y] +\left(x \ppr y_{[-1]}\right) \otimes y_{[0]}+x\qi\otimes (x\qii\trr y)$\\
$+\sigma\left(x, y_{1}\right) \otimes y_{2}+y_{[-1]}\ot [x, y_{[0]}]+y\qi\ot (x\trl y\qii)-\left(y \ppr x_{[-1]}\right) \otimes x_{[0]}$\\
$-y\qi\otimes (y\qii\trr x) -\sigma\left(y, x_{1}\right) \otimes x_{2} -x\qi\ot (y\trl x\qii)$,

\item[(CDM4)] $\gamma([x, y]) +\phi(\sigma(x, y) - \sigma(y, x))$\\
$=x_{[0]}\otimes (x_{[1]}\ppl y)+xy_{[0]}\otimes y_{[1]}+x_{1} \otimes \sigma\left(x_{2}, y\right)+ \left(x\trl y\qi\right)\otimes y\qii$\\
$+y\li\ot \si(x, y\lii)+y\poo\ot (x\ppr y\bi)-y_{[0]}\otimes (y_{[1]}\ppl x)-yx_{[0]}\otimes x_{[1]}$\\
$-y_{1} \otimes \sigma\left(y_{2}, x\right)- \left(y\trl x\qi\right)\otimes x\qii-x\li\ot \si(y, x\lii)-x\poo\ot (y\ppr x\bi)$,

\item[(CDM5)] $\Delta_{A}(x \ppr b)-\Delta_{A}(b \ppl x)+Q(x\trl  b)-Q(b \trr x)$ \\
$=x_{[-1]} \otimes\left(x_{[0]} \ppr b\right)+\left(x \ppr b_{1}\right) \otimes b_{2}+x\qi \otimes [x\qii, b]+\sigma(x, b_{(-1)}) \otimes b_{(0)}$\\
$+b\li \ot (x\ppr b\lii)+b\loo\ot \si(x, b\lmi)-b_{1} \otimes\left(b_{2} \ppl x\right)-\left(b\ppl x_{[0]}\right) \otimes x_{[1]}$\\
$-b_{(0)} \otimes \sigma\left(b_{(1)},  x\right)-b x\qi \otimes x\qii-x\boi \ot (b\ppl x\boo)$,

\item[(CDM6)] $\Delta_{H}(a \trr y)-\Delta_{H}(y \trl a)+P(a\ppl y)-P(y\ppr a)$\\
$=a_{(-1)} \otimes\left(a_{(0)}\trr y\right)+\left(a \trr y_{1}\right) \otimes y_{2}+a\ppi \otimes [a\pii, y]+\theta\left(a, y_{[-1]}\right) \otimes y_{[0]}$\\
$+y\li\ot (a\trr y\lii)+y\poo\ot \tht(a, y\bi)-y_{1} \otimes\left(y_{2}  \trl a\right)-\left(y\trl a_{(0)}\right) \otimes a_{(1)}$\\
$-y_{[0]} \otimes \theta\left(y_{[1]}, a\right)-y a\ppi\otimes a\pii -a_{(-1)}\ot (y\trl a\loo)$,

\item[(CDM7)]$\Delta_{H}(\theta(a,b)-\theta(b,a))+P([a, b])$\\
$=a_{(-1)} \otimes\theta(a_{(0)},b)+a\ppi \otimes (a\pii\trl b)+\theta(a,b_{(0)})\otimes b_{(1)}$\\
$+(a\trr b\ppi)\otimes b\pii+b\loi \ot \tht(a,b\lmoo)+b\ppi\ot (a\trr b\pii)$\\
$-b_{(-1)} \otimes\theta(b_{(0)},a)-b\ppi \otimes (b\pii\trl a)-\theta(b,a_{(0)})\otimes a_{(1)}$\\
$-(b\trr a\ppi)\otimes a\pii-a\loi \ot \tht(b,a\lmoo)-a\ppi\ot (b\trr a\pii)$,

\item[(CDM8)]$\Delta_{A}(\sigma(x,y)-\sigma(y, x))+Q([x, y])$\\
$=x_{[-1]}\otimes \sigma(x_{[0]},y)+x\qi\otimes (x\qii\ppl y)+\sigma(x,y_{[0]})\otimes y_{[1]}$\\
$+(x\ppr y\qi)\otimes y\qii+y\poi \ot \si(x,y\poo)+y\qi\ot (x\ppr y\qii)$\\
$-y_{[-1]}\otimes \sigma(y_{[0]},x)-y\qi\otimes (y\qii\ppl x)-\sigma(y,x_{[0]})\otimes x_{[1]}$\\
$-(y\ppr x\qi)\otimes x\qii-x\poi \ot \si(y,x\poo)-x\qi\ot (y\ppr x\qii)$,

\item[(CDM9)]
 $\phi(x \ppr b)-\phi(b \ppl x)+\gamma(x\trl b)-\gamma(b\trr x)$\\
 $=x_{1} \otimes\left(x_{2} \ppr b\right)+x b_{(-1)} \otimes b_{(0)}+x_{[0]} \otimes [x_{[1]}, b]+\left(x\trl b_{1}\right) \otimes b_{2}$\\
$+b_{(-1)}\ot (x\ppr b\loo)+b\ppi\ot \si(x,b\ppi)-b_{(-1)} \otimes\left(b_{(0)}\ppl x\right)$\\
$-\tht(b, x\qi)\ot x\qii-\left(b\trr x_{[0]}\right)\otimes x_{[1]} -b\ppi \ot \si(b\pii, x)-x\li \ot (b\ppl x\lii)$,

\item[(CDM10)]
$\psi(x \ppr b)-\psi(b \ppl x)+\rho(x\trl b)-\rho(b\trr x)$\\
$=(x \ppr b_{(0)}) \otimes b_{(1)}+x\qi\ot \tht(x\qii, b)+x_{[-1]} \otimes\left(x_{[0]} \trl b\right)+\si(x, b\ppi)\ot b\pii$\\
$+b\li\ot (x\trl b\lii)+b\loo\ot [x, b\lmi]-\left(b\ppl x_{1}\right) \otimes x_{2}-b_{1} \otimes\left(b_{2} \trr x\right)$\\
$-b x_{[-1]} \otimes x_{[0]}-x\boi \ot (b\trr x\poo)-x\qi \ot \tht(b, x\qii)$,

\item[(CDM11)]  $\phi(ab)+\gamma(\theta(a, b))-\tau\psi(a b)-\tau\rho(\theta(a, b))$\\
$=a_{(-1)} \otimes\left(a_{(0)} b\right)+(a \trr b_{(-1)}) \otimes b_{(0)}+a\ppi \otimes \left(a\pii \ppr b\right)$\\
$+\theta\left(a, b_{1}\right) \otimes b_{2}+b_{(-1)}\ot ab_{(0)}+b\ppi \ot (a\ppl b\pii) $\\
$-\tau\Big((a b_{(0)}) \otimes b_{(1)}+ a_{(0)} \otimes\left(a_{(1)} \trl b\right)+ \left(a\ppl b\ppi\right) \otimes b\pii$\\
$+ a_{1} \otimes \theta\left(a_{2}, b\right)+ b_{(0)}\ot (a\trr b_{(1)})+ b_{1}\ot \tht(a, b_{2})\Big)$,

\item[(CDM12)] $\rho(x y) +\psi(\sigma(x, y))-\tau\gamma(x y)-\tau\phi(\sigma(x, y))$\\
$=x_{[-1]} \otimes x_{[0]} y +\left(x \ppr y_{[-1]}\right) \otimes y_{[0]}+x\qi\otimes (x\qii\trr y)$\\
$+\sigma\left(x, y_{1}\right) \otimes y_{2}+y_{[-1]}\ot xy_{[0]}+y\qi\ot (x\trl y\qii)$\\
$-\tau\Big(x_{[0]}\otimes (x_{[1]}\ppl y)+xy_{[0]}\otimes y_{[1]}+x_{1} \otimes \sigma\left(x_{2}, y\right)$\\
$+ \left(x\trl y\qi\right)\otimes y\qii+y\li\ot \si(x, y\lii)+y\poo\ot (x\ppr y\bi)\Big)$,

\item[(CDM13)] $(\id-\tau)(\Delta_{A}(x \ppr b) +Q(x\trl  b) )$ \\
$=(\id-\tau)\Big(x_{[-1]} \otimes\left(x_{[0]} \ppr b\right)+\left(x \ppr b_{1}\right) \otimes b_{2}+x\qi \otimes x\qii b$\\
$+\sigma(x, b_{(-1)}) \otimes b_{(0)}+b\li \ot (x\ppr b\lii)+b\loo\ot \si(x, b\lmi)$\Big),

\item[(CDM14)] $(\id-\tau)(\Delta_{A}(a\ppl y) +Q(a\trr y) )$\\
$=(\id-\tau)\Big(a_{1} \otimes\left(a_{2} \ppl y\right)+\left(a\ppl y_{[0]}\right) \otimes y_{[1]}+a_{(0)} \otimes \sigma\left(a_{(1)}, y\right)$\\
$+a y\qi \otimes y\qii+y\boi \ot (a\ppl y\boo)+y\qi\ot ay\qii$\Big),

\item[(CDM15)] $(\id-\tau)(\Delta_{H}(a \trr y) +P(a\ppl y) )$\\
$=(\id-\tau)\Big(a_{(-1)} \otimes\left(a_{(0)}\trr y\right)+\left(a \trr y_{1}\right) \otimes y_{2}+a\ppi \otimes a\pii y$\\
$+\theta\left(a, y_{[-1]}\right) \otimes y_{[0]}+y\li\ot (a\trr y\lii)+y\poo\ot \tht(a, y\bi)$\Big),

\item[(CDM16)] $(\id-\tau)(\Delta_{H}(x \trl b) +P(x\ppr b) )$\\
$=(\id-\tau)\Big(x_{1} \otimes\left(x_{2}  \trl b\right)+\left(x\trl b_{(0)}\right) \otimes b_{(1)}+x_{[0]} \otimes \theta\left(x_{[1]}, b\right)$\\
$+x b\ppi\otimes b\pii+b\ppi\ot xb\pii+b_{(-1)}\ot (x\trl b\loo)$\Big),

\item[(CDM17)]$(\id-\tau)(\Delta_{H}(\theta(a, b)) +P(a b) )$\\
$=(\id-\tau)\Big(a_{(-1)} \otimes\theta(a_{(0)}, b)+a\ppi \otimes (a\pii\trl b)+\theta(a, b_{(0)})\otimes b_{(1)}$\\
$+(a\trr b\ppi)\otimes b\pii+b\loi \ot \tht(a, b\lmoo)+b\ppi\ot (a\trr b\pii)$\Big),

\item[(CDM18)]$(\id-\tau)(\Delta_{A}(\sigma(x, y)) +Q(x y) )$\\
$=(\id-\tau)\Big(x_{[-1]}\otimes \sigma(x_{[0]},y)+x\qi\otimes (x\qii\ppl y)+\sigma(x, y_{[0]})\otimes y_{[1]}$\\
$+(x\ppr y\qi)\otimes y\qii+y\poi \ot \si(x,y\poo)+y\qi\ot (x\ppr y\qii)$\Big),

\item[(CDM19)]
 $\phi(x \ppr b) +\gamma(x\trl b) -\tau\psi(x \ppr b) -\tau\rho(x\trl b) $\\
 $=x_{1} \otimes\left(x_{2} \ppr b\right)+x b_{(-1)} \otimes b_{(0)}+x_{[0]} \otimes x_{[1]} b$\\
$+\left(x\trl b_{1}\right) \otimes b_{2}+b_{(-1)}\ot (x\ppr b\loo)+b\ppi\ot \si(x, b\pii)$\\
$-\tau\Big((x \ppr b_{(0)}) \otimes b_{(1)}+x\qi\ot \tht(x\qii, b)+x_{[-1]} \otimes\left(x_{[0]} \trl b\right)$\\
$+\si(x, b\ppi)\ot b\pii+b\li\ot (x\trl b\lii)+b\loo\ot xb\lmi\Big)$,

\item[(CDM20)]$\psi(a\ppl y) +\rho(a \trr y)-\tau \phi(a\ppl y) -\tau\gamma(a \trr y)$\\
$=a_{(0)} \otimes a_{(1)} y+\left(a\ppl y_{1}\right) \otimes y_{2}+a_{1} \otimes\left(a_{2} \trr y\right)$\\
$+a y_{[-1]} \otimes y_{[0]}+y\boi \ot (a\trr y\poo)+y\qi \ot \tht(a, y\qii)$\\
$-\tau\Big(a_{(-1)} \otimes\left(a_{(0)}\ppl y\right)+\tht(a, y\qi)\ot y\qii+\left(a\trr y_{[0]}\right)\otimes y_{[1]}$\\
$+a\ppi \ot \si(a\pii, y)+y\li \ot (a\ppl y\lii)+ y\poo\ot ay\bi\Big)$,
\end{enumerate}

\noindent then $(A, H, \sigma, \theta, P, Q)$ is called a \emph{cocycle double matched pair}.
\end{definition}

\begin{definition}\label{cocycle-braided}
(i) A \emph{cocycle braided  left-symmetric bialgebra} $A$ is simultaneously a cocycle left-symmetric algebra $(A, \theta)$ and  a cycle left-symmetric coalgebra $(A, Q)$ satisfying the  conditions
\begin{enumerate}
\item[(CBB1)] $\Delta_{A}([a, b])+Q\theta(a, b)-Q\theta(b, a)$\\
$=  a b\li\ot b\lii+b\li\ot [a, b\lii]-b a\li\ot a\lii -a\li \ot [b, a\lii]+a_{(0)} \otimes\left(a_{(1)} \ppr b\right)+(a\ppl b_{(-1)}) \otimes b_{(0)}$\\
$+b_{(0)} \otimes (a\ppl b_{(1)}) -b_{(0)} \otimes\left(b_{(1)} \ppr a\right)-(b\ppl a_{(-1)}) \otimes a_{(0)}-a_{(0)} \otimes (b\ppl a_{(1)})$,
\end{enumerate}
\begin{enumerate}
\item[(CBB2)] $(\id-\tau)\left(\Delta_{A}(ab)+Q\theta(a,b)\right)$\\
$=(\id-\tau) \Big(a_{1} \otimes a_{2} b+a b_{1} \otimes b_{2}+ b_{1} \otimes a b_{2}$\\
$+a_{(0)} \otimes\left(a_{(1)} \ppr b\right)+(a\ppl b_{(-1)}) \otimes b_{(0)}+b_{(0)} \otimes (a\ppl b_{(1)})\Big)$.
\end{enumerate}
(ii) A \emph{cocycle braided  left-symmetric bialgebra} $H$ is simultaneously a cocycle  left-symmetric algebra $(H, \sigma)$ and a cycle  left-symmetric coalgebra $(H, P)$ satisfying the conditions
\begin{enumerate}
\item[(CBB3)] $\Delta_{H}([x, y])+P\sigma(x,y)-P\sigma(y,x)$\\
$=x_{1} \otimes [x_{2} ,y]+x y_{1} \otimes y_{2}-y_{1} \otimes [y_{2}, x]-y x_{1} \otimes x_{2}+x_{[0]} \otimes\left(x_{[1]} \trr y\right)+\left(x \trl y_{[-1]}\right) \otimes y_{[0]}$\\
 $+ y_{[0]} \otimes \left(x \trl y_{[1]}\right)-y_{[0]} \otimes\left(y_{[1]} \trr x\right)-\left(y \trl x_{[-1]}\right) \otimes x_{[0]}- x_{[0]} \otimes \left(y \trl x_{[1]}\right)$,
\end{enumerate}
\begin{enumerate}
\item[(CBB4)] $(\id-\tau)\left(\Delta_{H}(xy)+P\sigma(x,y)\right)$\\
$=(\id-\tau)\Big(x_{1} \otimes x_{2} y+x y_{1} \otimes y_{2}+ y_{1} \otimes x y_{2}+x_{[0]} \otimes\left(x_{[1]} \trr y\right)+\left(x \trl y_{[-1]}\right) \otimes y_{[0]}+ y_{[0]} \otimes \left(x \trl y_{[1]}\right)\Big)$.
\end{enumerate}
\end{definition}

It is shown that we can obtain an ordinary  left-symmetric bialgebra from two cocycle braided    left-symmetric bialgebras.
\begin{theorem}\label{main2}
Let $(A, H, \sigma, \theta, P, Q)$ be a cocycle double matched pair,  $(A, H, \sigma, \theta)$ a cocycle cross product system and $(A, H, P, Q)$ a cycle cross coproduct system.
Then the cocycle cross product   algebra and cycle cross coproduct   coalgebra fit together to become an ordinary
   left-symmetric bialgebra if and only if both $A$ and $H$ are  cocycle braided  left-symmetric bialgebras. We will call it the cocycle bicrossproduct   left-symmetric bialgebra and denote it by $A^{P}_{\sigma}\# {}^{Q}_{\theta}H$.
\end{theorem}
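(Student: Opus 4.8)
The plan is to assemble the bialgebra from the two half-structures that are already in place. By the cocycle cross product theorem establishing conditions (CP1)--(CP8), the product on $E=A\oplus H$ given by $(a,x)(b,y)=(ab+x\ppr b+a\ppl y+\sigma(x,y),\,xy+x\trl b+a\trr y+\theta(a,b))$ is a left-symmetric algebra precisely because $(A,H,\sigma,\theta)$ is a cocycle cross product system. Dually, by Lemma \ref{lem2}, the coproduct $\Delta_E=(\Delta_A+\phi+\psi+P)\oplus(\Delta_H+\rho+\gamma+Q)$ is a left-symmetric coalgebra precisely because $(A,H,P,Q)$ is a cycle cross coproduct system. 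Hence it remains only to check the two bialgebra compatibility conditions \eqref{bialg01} and \eqref{bialg02} of Definition \ref{dfnlb} for the pair $(E,\cdot,\Delta_E)$, in the same spirit as the proof of Theorem \ref{main1}.

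First I would verify \eqref{bialg01}, namely $\Delta_E([(a,x),(b,y)])=\Delta_E(a,x)\cdot(b,y)+\cdots-(b,y)\bullet\Delta_E(a,x)$. The left side is computed by forming the bracket in the cocycle cross product algebra and then applying $\Delta_E$; since $\Delta_E$ has four summands on each slot and the bracket produces the cocycle terms $\sigma$ and $\theta$, this generates contributions $\Delta_A,\phi,\psi,P$ on the $A$-entries and $\Delta_H,\rho,\gamma,Q$ on the $H$-entries, including the crucial pieces $Q\theta$ and $P\sigma$. Expanding the right side using the definitions of $\cdot$ and $\bullet$ on $E$ and comparing the two sides componentwise in $E\otimes E=(A\oplus H)^{\otimes 2}$, the terms built solely from $A$ and its structure maps reassemble into condition (CBB1), those built solely from $H$ into (CBB3), and the genuinely mixed terms reproduce the double matched pair conditions (CDM1)--(CDM10).

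Next I would treat \eqref{bialg02}, the condition $(\id-\tau)\Delta_E((a,x)(b,y))=(\id-\tau)\bigl(\Delta_E(a,x)\cdot(b,y)+\cdots\bigr)$. The same expansion strategy applies: the purely $A$-valued part yields (CBB2), the purely $H$-valued part yields (CBB4), and the mixed part yields (CDM11)--(CDM20). Combining the two compatibility computations then shows that $(E,\cdot,\Delta_E)$ is an ordinary left-symmetric bialgebra if and only if (CBB1)--(CBB4) hold, i.e. $A$ and $H$ are cocycle braided left-symmetric bialgebras, together with the matched-pair identities, which by hypothesis are exactly (CDM1)--(CDM20) of Definition \ref{cocycledmp}.

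The main obstacle is organizational rather than conceptual: each side of each compatibility condition expands into several dozen tensor terms, and the delicate point is to sort them correctly by their target tensor component and to recognize that the $\sigma,\theta,P,Q$ contributions are precisely the extra summands $Q\theta$ and $P\sigma$ appearing in (CBB1)--(CBB4), rather than being absorbed into the matched-pair conditions. Careful bookkeeping of these cocycle and cycle terms---ensuring, for instance, that $Q(\theta(a,b)-\theta(b,a))$ collects on the $A$-side of \eqref{bialg01} while the analogous $P(\sigma(x,y)-\sigma(y,x))$ collects on the $H$-side---is exactly what distinguishes this computation from the cocycle-free double cross biproduct of Theorem \ref{main1}.
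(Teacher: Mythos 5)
Your proposal is correct and follows essentially the same route as the paper's own proof: the algebra and coalgebra structures are granted by the cocycle cross product theorem and Lemma \ref{lem2}, and then both bialgebra compatibility conditions \eqref{bialg01} and \eqref{bialg02} are expanded in full, with the terms sorted so that the conditions (CDM1)--(CDM10) (respectively (CDM11)--(CDM20)), which hold by the double matched pair hypothesis, absorb the mixed contributions, leaving exactly (CBB1), (CBB3) (respectively (CBB2), (CBB4)) as the residual constraints. Your bookkeeping of the cocycle terms $Q\theta$ and $P\sigma$ into (CBB1)--(CBB4) also matches the paper's computation.
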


\begin{proof}  We  need to check the first compatibility condition $\Delta([(a, x), (b, y)])=\Delta(a, x) \cdot(b, y)+(a, x) \cdot \Delta(b, y)+(a, x) \bullet \Delta(b, y)-\Delta(b, y ) \cdot(a, x)-(b, y ) \cdot \Delta(a, x)-( b, y) \bullet \Delta(a, x)$.
The left hand side is equal to
\begin{eqnarray*}
&&\Delta([(a, x), (b, y)])\\
&=&\Delta((a, x) (b, y))-\Delta((b, y) (a, x))\\
&=&\Delta(a b+x \ppr b+a\ppl y+\sigma(x, y), x y+x \trl b+a\trr y+\theta(a, b))\\
&&-\Delta(ba+y \ppr a+b\ppl x+\sigma(y, x), yx+y \trl a+b\trr x+\theta(b, a))\\
&=&\Delta_A(a b)+\phi(a b)+\psi(a b)+P(ab)\\
&&+\Delta_A(x \ppr b)+\phi(x \ppr b)+\psi(x \ppr b)+P(x \ppr b)\\
&&+\Delta_{A}(a\ppl y)+\phi(a\ppl y)+\psi(a\ppl y)+P(a\ppl y)\\
&&+\Delta_{A}(\sigma(x, y))+\phi(\sigma(x, y))+\psi(\sigma(x, y))+P(\sigma(x, y))\\
&&+\Delta_{H}(x y)+\rho(x y)+\gamma(x y)+Q(xy)\\
&&+\Delta_{H}(x \trl b)+\rho(x \trl b)+\gamma(x \trl b)+Q(x \trl b)\\
&&+\Delta_{H}(a \trr y)+\rho(a \trr y)+\gamma(a \trr y)+Q(a \trr y)\\
&&+\Delta_{H}(\theta(a, b))+\rho(\theta(a, b))+\gamma(\theta(a, b))+Q(\theta(a, b))\\
&&- \Delta_A(ba)- \phi(ba)- \psi(ba)-  P(ba)\\
&&- \Delta_A(y \ppr a)- \phi(y \ppr a)- \psi(y \ppr a)-  P(y \ppr a)\\
&&- \Delta_{A}(b\ppl x)- \phi(b\ppl x)- \psi(b\ppl x)-  P(b\ppl x)\\
&&- \Delta_{A}(\sigma(y, x))- \phi(\sigma(y, x))- \psi(\sigma(y, x))-  P(\sigma(y, x))\\
&&- \Delta_{H}(yx)- \rho(yx)- \gamma( y x)-  Q( y x)\\
&&- \Delta_{H}(y \trl a)- \rho(y \trl a)- \gamma(y \trl a)-  Q(y \trl a)\\
&&- \Delta_{H}(b \trr x)- \rho(b \trr x)- \gamma(b \trr x)-  Q(b \trr x)\\
&&- \Delta_{H}(\theta(b, a))- \rho(\theta(b, a))- \gamma(\theta(b, a))-  Q(\theta(b, a)),
\end{eqnarray*}
and the right hand side is equal to
\begin{eqnarray*}
&&\Delta(a, x) \cdot(b, y)+(a, x) \cdot \Delta(b, y)+(a, x) \bullet \Delta(b, y)\\
&&-\Delta(b, y ) \cdot(a, x)-(b, y ) \cdot \Delta(a, x)-( b, y) \bullet \Delta(a, x)\\
&=&a_{1} \otimes a_{2} b+a_{1} \otimes\left(a_{2}\ppl y\right)+a_{1} \otimes\left(a_{2} \trr y\right)+a_{1} \otimes \theta(a_2, b)\\
&&+a_{(-1)} \otimes a_{(0)} b+a_{(-1)} \otimes\left(a_{(0)}\ppl y\right)+a_{(-1)} \otimes\left(a_{(0)} \trr y\right)+a_{(-1)} \otimes \theta(a_{(0)}, b)\\
&&+a_{(0)}\otimes\left(a_{(1)} \ppr b\right)+a_{(0)} \otimes\left(a_{(1)} \trl b\right)+a_{(0)} \otimes a_{(1)}y+a_{(0)} \otimes \si(a_{(1)}, y)\\
&&+a\ppi \otimes \left(a\pii \ppr b\right)+a\ppi \ot \sigma\left(a\pii, y\right)+a\ppi\otimes a\pii y+a\ppi \otimes \left(a\pii\trl  b\right)\\
&&+x_{1} \otimes\left(x_{2}\ppr b\right)+x_{1} \otimes\left(x_{2} \trl b\right)+x_{1} \otimes x_{2} y +x_{1} \otimes \sigma(x_{2}, y)\\
&&+x\boi\ot(x\boo\ppr b)+x\boi\ot(x\boo\trl b)+x\boi\ot x\boo y+x\boi\ot \sigma(x\boo, y)\\
&&+x\boo\ot x\bi b+x\boo\ot (x\bi\ppl y)+x\boo\ot (x\bi\trr y)+x\boo\ot \theta(x\bi, b)\\
&&+x\qi\otimes x\qii b+x\qi\otimes \left(x\qii \ppl y\right)+x\qi\otimes \left(x\qii \trr y\right)+x\qi \otimes \theta\left(x\qii, b\right)\\
&&+a b_{1} \otimes b_{2}+\left(x \ppr b_{1}\right) \otimes b_{2}+\left(x \trl b_{1}\right) \otimes b_{2}+\theta(a, b_{1})\otimes b_{2}\\
&&+(a\ppl b_{(-1)}) \otimes b_{(0)}+(a \trr b_{(-1)}) \otimes b_{(0)}+x b_{(-1)} \otimes b_{(0)}+\sigma(x, b_{(-1)})\otimes b_{(0)}\\
&&+a b_{(0)} \otimes b_{(1)}+(x \ppr b_{(0)}) \otimes b_{(1)}+(x \trl b_{(0)}) \otimes b_{(1)}+\theta(a, b_{(0)}) \otimes b_{(1)}\\
&&+(a\ppl b\ppi) \otimes b\pii+(a \trr b\ppi) \otimes b\pii+x b\ppi \otimes b\pii+\sigma(x, b\ppi)\otimes b\pii\\
&&+\left(a\ppl y_{1}\right) \otimes y_{2}+\left(a \trr y_{1}\right) \otimes y_{2}+x y_{1}\ot y_{2}+\sigma(x, y_{1})\ot y_{2}\\
&&+ay\boi\ot y\boo+(x\ppr y\boi)\ot y\boo+(x\trl y\boi)\ot y\boo+\theta(a,y\boi)\ot y\boo\\
&&+(a\ppl y\boo)\ot y\bi+(a\trr y\boo)\ot y\bi+xy\boo\ot y\bi+\sigma(x, y\boo)\ot y\bi\\
&&+(ay\qi)\ot y\qii+(x\ppr y\qi)\ot y\qii+(x\trl y\qi)\ot y\qii+\theta(a, y\qi)\ot y\qii\\
&&+ b_{1}\otimes a b_{2}+b_{1} \otimes \left(x \ppr b_{2}\right)+ b_{1}\otimes \left(x \trl b_{2}\right)+b_{1}\otimes\theta(a, b_{2}) \\
&&+ b_{(0)}\otimes (a\ppl b_{(1)})+ b_{(0)}\otimes (a \trr b_{(1)})+ b_{(0)}\otimes x b_{(1)}+b_{(0)}\otimes \sigma(x, b_{(1)})\\
&&+b_{(-1)}\otimes a b_{(0)} + b_{(-1)}\otimes(x \ppr b_{(0)})+b_{(-1)}\otimes (x \trl b_{(0)}) + b_{(-1)}\otimes\theta(a, b_{(0)}) \\
&&+b\ppi\otimes (a\ppl b\pii) + b\ppi\otimes(a \trr b\pii)+ b\ppi\otimes x b\pii+ b\ppi\otimes\sigma(x, b\pii)\\
&&+ y_{1}\otimes\left(a\ppl y_{2}\right)+ y_{1}\otimes \left(a \trr y_{2}\right)+y_{1}\ot x y_{2}+y_{1}\ot \sigma(x, y_{2})\\
&&+y\boo\ot ay\bi+ y\boo\ot(x\ppr y\bi)+ y\boo\ot(x\trl y\bi)+ y\boo\ot\theta(a,y\bi)\\
&&+ y\boi\ot(a\ppl y\boo)+y\boi\ot (a\trr y\boo)+y\boi\ot xy\boo+ y\boi\ot\sigma(x, y\boo)\\
&&+y\qi\ot (ay\qii)+ y\qi\ot x\ppr y\qii+ y\qi\ot x\trl y\qii+ y\qi\ot\theta(a, y\qii)\\
&&-b_{1} \otimes b_{2} a-b_{1} \otimes\left(b_{2}\ppl x\right)-b_{1} \otimes\left(b_{2} \trr x\right)-b_{1} \otimes \tht(b_2, a)\\
&&-b_{(-1)} \otimes b_{(0)} a-b_{(-1)} \otimes\left(b_{(0)}\ppl x\right)-b_{(-1)} \otimes\left(b_{(0)} \trr x\right)-b_{(-1)} \otimes \tht(b_{(0)}, a)\\
&&-b_{(0)}\otimes\left(b_{(1)} \ppr a\right)-b_{(0)} \otimes\left(b_{(1)} \trl a\right)-b_{(0)} \otimes b_{(1)}x-b_{(0)} \otimes \si(b_{(1)}, x)\\
&&-b\ppi \otimes \left(b\pii \ppr a\right)-b\ppi \ot \si\left(b\pii, x\right)-b\ppi\otimes b\pii x-b\ppi \otimes \left(b\pii\trl  a\right)\\
&&-y_{1} \otimes\left(y_{2}\ppr a\right)-y_{1} \otimes\left(y_{2} \trl a\right)-y_{1} \otimes y_{2} x -y_{1} \otimes \si(y_{2}, x)\\
&&-y\boi\ot(y\boo\ppr a)-y\boi\ot(y\boo\trl a)-y\boi\ot y\boo x-y\boi\ot \si(y\boo, x)\\
&&-y\boo\ot y\bi a-y\boo\ot (y\bi\ppl x)-y\boo\ot (y\bi\trr x)-y\boo\ot \tht(y\bi, a)\\
&&-y\qi\otimes y\qii a-y\qi\otimes \left(y\qii \ppl x\right)-y\qi\otimes \left(y\qii \trr x\right)-y\qi \otimes \tht\left(y\qii, a\right)\\
&&-b a_{1} \otimes a_{2}-\left(y \ppr a_{1}\right) \otimes a_{2}-\left(y \trl a_{1}\right) \otimes a_{2}-\tht(b, a_{1})\otimes a_{2}\\
&&-(b\ppl a_{(-1)}) \otimes a_{(0)}-(b \trr a_{(-1)}) \otimes a_{(0)}-y a_{(-1)} \otimes a_{(0)}-\si(y, a_{(-1)})\otimes a_{(0)}\\
&&-b a_{(0)} \otimes a_{(1)}-(y \ppr a_{(0)}) \otimes a_{(1)}-(y \trl a_{(0)}) \otimes a_{(1)}-\tht(b, a_{(0)}) \otimes a_{(1)}\\
&&-(b\ppl a\ppi) \otimes a\pii-(b \trr a\ppi) \otimes a\pii-y a\ppi \otimes a\pii-\si(y, a\ppi)\otimes a\pii\\
&&-\left(b\ppl x_{1}\right) \otimes x_{2}-\left(b \trr x_{1}\right) \otimes x_{2}-y x_{1}\ot x_{2}-\si(y, x_{1})\ot x_{2}\\
&&-bx\boi\ot x\boo-(y\ppr x\boi)\ot x\boo-(y\trl x\boi)\ot x\boo-\tht(b,x\boi)\ot x\boo\\
&&-(b\ppl x\boo)\ot x\bi-(b\trr x\boo)\ot x\bi-yx\boo\ot x\bi-\si(y, x\boo)\ot x\bi\\
&&-(bx\qi)\ot x\qii-(y\ppr x\qi)\ot x\qii-(y\trl x\qi)\ot x\qii-\tht(b, x\qi)\ot x\qii\\
&&- a_{1}\otimes b a_{2}-a_{1} \otimes \left(y \ppr a_{2}\right)- a_{1}\otimes \left(y \trl a_{2}\right)-a_{1}\otimes\tht(b, a_{2}) \\
&&- a_{(0)}\otimes (b\ppl a_{(1)})- a_{(0)}\otimes (b \trr a_{(1)})- a_{(0)}\otimes y a_{(1)}-a_{(0)}\otimes \si(y, a_{(1)})\\
&&-a_{(-1)}\otimes b a_{(0)} - a_{(-1)}\otimes(y \ppr a_{(0)})-a_{(-1)}\otimes (y \trl a_{(0)}) - a_{(-1)}\otimes\tht(b, a_{(0)}) \\
&&-a\ppi\otimes (b\ppl a\pii) - a\ppi\otimes(b \trr a\pii)- a\ppi\otimes y a\pii- a\ppi\otimes\si(y, a\pii)\\
&&- x_{1}\otimes\left(b\ppl x_{2}\right)- x_{1}\otimes \left(b \trr x_{2}\right)-x_{1}\ot y x_{2}-x_{1}\ot \si(y, x_{2})\\
&&-x\boo\ot bx\bi- x\boo\ot(y\ppr x\bi)- x\boo\ot(y\trl x\bi)- x\boo\ot\tht(b,x\bi)\\
&&- x\boi\ot(b\ppl x\boo)-x\boi\ot (b\trr x\boo)-x\boi\ot yx\boo- x\boi\ot\si(y, x\boo)\\
&&-x\qi\ot (bx\qii)- x\qi\ot (y\ppr x\qii)- x\qi\ot (y\trl x\qii)- x\qi\ot\tht(b, x\qii),
\end{eqnarray*}

By using the  cocycle double matched pair conditions (CDM1)--(CDM10) in
Definition \ref{cocycledmp}, we find that the two sides are equal to each other if and only if $A$ and $H$ satisfy the fist compatibility condition of cocycle braided left-symmetric bialgebra respectively.
Then, we check  $(\id-\tau)(\Delta( (a, x), (b, y) ))=(\id-\tau)(\Delta(a, x) \cdot(b, y)+(a, x) \cdot \Delta(b, y)+(a, x) \bullet \Delta(b, y))$,  the left hand side is equal to
\begin{eqnarray*}
&&(\id-\tau)(\Delta( (a, x), (b, y) ))\\
&=&(\id-\tau)\Delta(a b+x \ppr b+a\ppl y+\sigma(x, y), x y+x \trl b+a\trr y+\theta(a, b))\\
&=& \Delta_A(a b)+\phi(a b)+\psi(a b)+\Delta_A(x \ppr b)+\phi(x \ppr b)+\psi(x \ppr b)\\
&&+\Delta_{A}(a\ppl y)+\phi(a\ppl y)+\psi(a\ppl y)+\Delta_{H}(x y)+\rho(x y)+\gamma(x y)\\
&&+\Delta_{H}(x \trl b)+\rho(x \trl b)+\gamma(x \trl b)+\Delta_{H}(a \trr y)+\rho(a \trr y)+\gamma(a \trr y)\\
&&+\Delta_{H}(\theta(a,b))+\rho(\theta(a,b))+\gamma(\theta(a,b))+P(ab)+P(x \ppr b)+P(a\ppl y)+P(\sigma(x, y))\\
&&+\Delta_{A}(\sigma(x,y))+\phi(\sigma(x,y))+\psi(\sigma(x,y))+Q(xy)+Q(x \trl b)+Q(a\trr y)+Q(\theta(a, b))\\
&&-\tau\Delta_A(a b)-\tau\phi(a b)-\tau\psi(a b)-\tau\Delta_A(x \ppr b)-\tau\phi(x \ppr b)-\tau\psi(x \ppr b)\\
&&-\tau\Delta_{A}(a\ppl y)-\tau\phi(a\ppl y)-\tau\psi(a\ppl y)-\tau\Delta_{H}(x y)-\tau\rho(x y)-\tau\gamma(x y)\\
&&-\tau\Delta_{H}(x \trl b)-\tau\rho(x \trl b)-\tau\gamma(x \trl b)-\tau\Delta_{H}(a \trr y)-\tau\rho(a \trr y)-\tau\gamma(a \trr y)\\
&&-\tau\Delta_{H}(\theta(a,b))-\tau\rho(\theta(a,b))-\tau\gamma(\theta(a,b))-\tau P(ab)-\tau P(x \ppr b)-\tau P(a\ppl y)-\tau P(\sigma(x, y))\\
&&-\tau\Delta_{A}(\sigma(x,y))-\tau\phi(\sigma(x,y))-\tau\psi(\sigma(x,y))-\tau Q(xy)-\tau Q(x \trl b)-\tau Q(a\trr y)-\tau Q(\theta(a, b)) ,
\end{eqnarray*}
and the right hand side is equal to
\begin{eqnarray*}
&&(\id-\tau)\Delta(a, x) \cdot(b, y)+(a, x) \cdot \Delta(b, y)+(a, x) \bullet \Delta(b, y)\\
&=&(\id-\tau)\Big(a_{1} \otimes a_{2} b+a_{1} \otimes\left(a_{2}\ppl y\right)+a_{1} \otimes\left(a_{2} \trr y\right)+a_{1} \otimes \theta(a_2, b)\\
&&+a_{(-1)} \otimes a_{(0)} b+a_{(-1)} \otimes\left(a_{(0)}\ppl y\right)+a_{(-1)} \otimes\left(a_{(0)} \trr y\right)+a_{(-1)} \otimes \theta(a_{(0)}, b)\\
&&+a_{(0)}\otimes\left(a_{(1)} \ppr b\right)+a_{(0)} \otimes\left(a_{(1)} \trl b\right)+a_{(0)} \otimes a_{(1)}y+a_{(0)} \otimes \si(a_{(1)}, y)\\
&&+a\ppi \otimes \left(a\pii \ppr b\right)+a\ppi \ot \sigma\left(a\pii, y\right)+a\ppi\otimes a\pii y+a\ppi \otimes \left(a\pii\trl  b\right)\\
&&+x_{1} \otimes\left(x_{2}\ppr b\right)+x_{1} \otimes\left(x_{2} \trl b\right)+x_{1} \otimes x_{2} y +x_{1} \otimes \sigma(x_{2}, y)\\
&&+x\boi\ot(x\boo\ppr b)+x\boi\ot(x\boo\trl b)+x\boi\ot x\boo y+x\boi\ot \sigma(x\boo, y)\\
&&+x\boo\ot x\bi b+x\boo\ot (x\bi\ppl y)+x\boo\ot (x\bi\trr y)+x\boo\ot \theta(x\bi, b)\\
&&+x\qi\otimes x\qii b+x\qi\otimes \left(x\qii \ppl y\right)+x\qi\otimes \left(x\qii \trr y\right)+x\qi \otimes \theta\left(x\qii, b\right)\\
&&+a b_{1} \otimes b_{2}+\left(x \ppr b_{1}\right) \otimes b_{2}+\left(x \trl b_{1}\right) \otimes b_{2}+\theta(a, b_{1})\otimes b_{2}\\
&&+(a\ppl b_{(-1)}) \otimes b_{(0)}+(a \trr b_{(-1)}) \otimes b_{(0)}+x b_{(-1)} \otimes b_{(0)}+\sigma(x, b_{(-1)})\otimes b_{(0)}\\
&&+a b_{(0)} \otimes b_{(1)}+(x \ppr b_{(0)}) \otimes b_{(1)}+(x \trl b_{(0)}) \otimes b_{(1)}+\theta(a, b_{(0)}) \otimes b_{(1)}\\
&&+(a\ppl b\ppi) \otimes b\pii+(a \trr b\ppi) \otimes b\pii+x b\ppi \otimes b\pii+\sigma(x, b\ppi)\otimes b\pii\\
&&+\left(a\ppl y_{1}\right) \otimes y_{2}+\left(a \trr y_{1}\right) \otimes y_{2}+x y_{1}\ot y_{2}+\sigma(x, y_{1})\ot y_{2}\\
&&+ay\boi\ot y\boo+(x\ppr y\boi)\ot y\boo+(x\trl y\boi)\ot y\boo+\theta(a,y\boi)\ot y\boo\\
&&+(a\ppl y\boo)\ot y\bi+(a\trr y\boo)\ot y\bi+xy\boo\ot y\bi+\sigma(x, y\boo)\ot y\bi\\
&&+ay\qi\ot y\qii+(x\ppr y\qi)\ot y\qii+(x\trl y\qi)\ot y\qii+\theta(a, y\qi)\ot y\qii\\
&&+ b_{1}\otimes a b_{2}+b_{1} \otimes \left(x \ppr b_{2}\right)+ b_{1}\otimes \left(x \trl b_{2}\right)+b_{1}\otimes\theta(a, b_{2}) \\
&&+ b_{(0)}\otimes (a\ppl b_{(1)})+ b_{(0)}\otimes (a \trr b_{(1)})+ b_{(0)}\otimes x b_{(1)}+b_{(0)}\otimes \sigma(x, b_{(1)})\\
&&+b_{(-1)}\otimes a b_{(0)} + b_{(-1)}\otimes(x \ppr b_{(0)})+b_{(-1)}\otimes (x \trl b_{(0)}) + b_{(-1)}\otimes\theta(a, b_{(0)}) \\
&&+b\ppi\otimes (a\ppl b\pii) + b\ppi\otimes(a \trr b\pii)+ b\ppi\otimes x b\pii+ b\ppi\otimes\sigma(x, b\pii)\\
&&+ y_{1}\otimes\left(a\ppl y_{2}\right)+ y_{1}\otimes \left(a \trr y_{2}\right)+y_{1}\ot x y_{2}+y_{1}\ot \sigma(x, y_{2})\\
&&+y\boo\ot ay\bi+ y\boo\ot(x\ppr y\bi)+ y\boo\ot(x\trl y\bi)+ y\boo\ot\theta(a,y\bi)\\
&&+ y\boi\ot(a\ppl y\boo)+y\boi\ot (a\trr y\boo)+y\boi\ot xy\boo+ y\boi\ot\sigma(x, y\boo)\\
&&+y\qi\ot ay\qii+ y\qi\ot (x\ppr y\qii)+ y\qi\ot( x\trl y\qii)+ y\qi\ot\theta(a, y\qii)\Big).
\end{eqnarray*}
Now using the  cocycle double matched pair conditions (CDM11)--(CDM20) in
Definition \ref{cocycledmp}, we find that the two sides are equal to each other if and only if $A$ and $H$ satisfy the second compatibility condition of cocycle braided left-symmetric bialgebra.
 This complete the proof.
\end{proof}

\section{Extending structures for   left-symmetric bialgebras}
In this section, we will study the extending problem for  left-symmetric bialgebras.
We will find some special cases when the braided left-symmetric bialgebra is deduced into an ordinary    left-symmetric bialgebra.
It is proved that the extending problem can be solved by using of the non-abelian cohomology theory based on our cocycle bicrossedproduct for braided left-symmetric bialgebras in last section.

\subsection{Extending structures for left-symmetric algebras }
First we are going to study extending problem for left-symmetric algebras and left-symmetric coalgebras.

There are two cases for $A$ to be a left-symmetric algebra in the cocycle cross product system defined in last section, see condition (CC6). The first case is when we let $\ppr$, $\ppl$ to be trivial and $\theta\neq 0$,  then from condition (CP8) we get $\si(\theta(a, b),x)-\si(\theta(b, a), x)=0$, since $\theta\neq 0$ we assume $\sigma=0$ for simplicity, thus  we obtain the following type $(a1)$  unified product for left-symmetric algebras.

\begin{lemma}
Let ${A}$ be a left-symmetric algebra and $V$  a vector space. An extending datum of ${A}$ by $V$ of type $(a1)$  is  $\Omega^{(1)}({A}, V)=(\trr, \, \trl, \, \theta, \, \cdot)$ consisting of bilinear maps
\begin{eqnarray*}
\trr: A\otimes V \to V, \quad \trl: V\otimes A \to V, \quad\theta: A\ot A\to V, \quad\cdot: V\ot V\to V.
\end{eqnarray*}
Denote by $A_{}\#_{\theta}V$ the vector space $E={A}\oplus V$ together with the product given by
\begin{eqnarray}
(a, x)(b, y)=\big(ab, \, xy+x\trl b+a\trr y+\theta(a, b)\big).
\end{eqnarray}
Then $A_{}\# {}_{\theta}V$ is a left-symmetric algebra if and only if the following compatibility conditions hold for all $a, b\in {A}$, $x, y, z\in V$:
\begin{enumerate}
\item[(A1)] $[a, b] \trr x+(\theta(a, b)  -\theta(b, a)) x=a\trr (b\trr x)-b\trr (a\trr x) $,
\item[(A2)] $x \trl (a b)+x\theta(a, b)=a \trr (x\trl b) +(x \trl a-a\trr x) \trl b $,
\item[(A3)] $ a \trr (x y) =(a \trr x-x\trl a) y +x(a\trr y)$,
\item[(A4)] $[x, y] \trl a  =  x(y \trl a)-y(x \trl a)$,
\item[(A5)]$\theta(a b, c)-\theta(a, b c)+\theta(a, b) \triangleleft c-a\trr \theta(b, c)=\theta(ba, c)-\theta(b, a c)+\theta(b,a) \triangleleft c-b\trr \theta(a, c),$
\item[(A6)] $(x y)z-x(yz)=(yx)z-y(xz)$,
\end{enumerate}
\end{lemma}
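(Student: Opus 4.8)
The plan is to check the left-symmetric identity
\[
(uv)w-u(vw)=(vu)w-v(uw)
\]
directly on $E=A\oplus V$ for the given product. Since this identity is trilinear in $u,v,w$ and every element of $E$ splits uniquely as $(a,0)+(0,x)$, by multilinearity it is enough to verify it when each of $u,v,w$ is of the form $(a,0)$ with $a\in A$ or $(0,x)$ with $x\in V$. This reduces the whole statement to $2^3=8$ cases, and since each case forces exactly one relation, the ``if'' and ``only if'' directions come out together.

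First I would record the four elementary products obtained by specializing the defining formula, using that $\theta$ is bilinear so that $\theta(a,0)=\theta(0,x)=0$:
\[
(a,0)(b,0)=(ab,\,\theta(a,b)),\quad (a,0)(0,y)=(0,\,a\trr y),\quad (0,x)(b,0)=(0,\,x\trl b),\quad (0,x)(0,y)=(0,\,xy).
\]
With these, each case is a short computation of the associator $(uv)w-u(vw)$ and of its transpose $(vu)w-v(uw)$. When all three arguments lie in $A$, the $A$-component of the identity is precisely the left-symmetry of $A$, which holds by hypothesis, while the $V$-component produces (A5). When two arguments lie in $A$ and one in $V$: the $V$-entry in the third slot gives (A1), and the $V$-entry in the first or second slot gives (A2) (both placements yield the same relation). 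When one argument lies in $A$ and two in $V$: the $A$-entry in the first or second slot gives (A3), and the $A$-entry in the third slot gives (A4). When all three lie in $V$, one gets exactly the left-symmetry (A6) of the product $\cdot$ on $V$. Collecting the eight cases shows $E$ is left-symmetric if and only if (A1)--(A6) hold.

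The computation is entirely mechanical, so the only real difficulty is bookkeeping: after antisymmetrizing in $u\leftrightarrow v$ one must track separately the $A$-valued and $V$-valued components and keep straight which of the $\trr$, $\trl$, $\theta$ and $\cdot$ terms survive in each slot. The points that need genuine care are that $\theta$ contributes only when the first two arguments both lie in $A$, that the commutator brackets $[a,b]=ab-ba$ and $[x,y]=xy-yx$ are what emerge after antisymmetrization (matching the bracket notation in (A1) and (A4)), and that the pairs of cases collapsing to (A2) and to (A3) really do give identical relations rather than two independent constraints.
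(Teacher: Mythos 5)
Your proposal is correct: the eight-case reduction is valid because the defect $(uv)w-u(vw)-(vu)w+v(uw)$ is trilinear, and your assignment of cases to conditions checks out exactly (AAA gives left-symmetry of $A$ in the $A$-component and (A5) in the $V$-component; AAV gives (A1); AVA and VAA both collapse to (A2); AVV and VAV both collapse to (A3); VVA gives (A4); VVV gives (A6)). However, your route differs from the paper's: the paper does not prove this lemma by a fresh computation at all, but obtains it as a degenerate case of its general cocycle cross product theorem (conditions (CP1)--(CP8)), by setting the actions $\ppr,\ppl$ to be trivial and $\sigma=0$; under this specialization (CP1)--(CP4) become (A1)--(A4), (CP5)--(CP8) become vacuous, and the cocycle identities (CC2), (CC5) become (A5), (A6). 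What the paper's route buys is economy and context --- the general expansion with mixed elements $(a,x),(b,y),(c,z)$ is done once, and both extension types (a1) and (a2) of Section 5 fall out of it by different specializations. What your route buys is a self-contained, more elementary argument: organizing the same computation over pure elements $(a,0)$, $(0,x)$ makes each of (A1)--(A6) visibly correspond to one homogeneous component of the left-symmetric identity, and in particular makes transparent the point (implicit in the paper) that the two mixed placements yielding (A2), and likewise (A3), impose a single relation rather than two.
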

Note that (A1)--(A4)  are deduced from (CP1)--(CP4) and by (A6)  we obtain that $V$ is a left-symmetric algebra. Furthermore, $V$ is in fact a left-symmetric subalgebra of $A_{}\#_{\theta}V$  but $A$ is not although $A$ is itself a left symmetric algebra.

Denote the set of all  algebraic extending datum of ${A}$ by $V$ of type $(a1)$  by $\Omega^{(1)}({A}, V)$.

In the following, we always assume that $A$ is a subspace of a vector space $E$, there exists a projection map $p: E \to{A}$ such that $p(a) = a$, for all $a \in {A}$.
Then the kernel space $V:= \ker(p)$ is also a subspace of $E$ and a complement of ${A}$ in $E$.

\begin{lemma}\label{lem:33-1}
Let ${A}$ be a left-symmetric  algebra and $E$ be a vector space containing ${A}$ as a   subspace.
Suppose that there is a left-symmetric algebra structure on $E$ such that $V$ is a left-symmetric subalgebra of $E$
and the canonical projection map $p: E\to A$ is a left-symmetric algebra homomorphism.
Then there exists a left-symmetric algebraic extending datum $\Omega^{(1)}({A},V)$ of ${A}$ by $V$ such that $E\cong A_{}\#_{\theta}V$.
\end{lemma}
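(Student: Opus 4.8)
The plan is to reconstruct the extending datum directly from the multiplication of $E$ and then to exhibit an explicit isomorphism with a type $(a1)$ unified product. Since $p : E \to A$ is linear with $p(a)=a$ for all $a\in A$, it is idempotent, so $E = A \oplus V$ with $V=\ker(p)$; write the product of the given left-symmetric algebra $E$ as juxtaposition. First I would define the four structure maps of $\Omega^{(1)}(A,V)$ by
\begin{align*}
a \trr y := ay, \qquad x \trl b := xb, \qquad x \cdot y := xy, \qquad \theta(a,b) := ab - p(ab),
\end{align*}
for $a,b\in A$ and $x,y\in V$, where on each right-hand side juxtaposition denotes the product of $E$.

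Next I would check that these maps take values in the asserted spaces, which is where the two structural hypotheses are used. Because $p$ is a left-symmetric algebra homomorphism and $p|_A=\id_A$, for $a,b\in A$ we have $p(ab)=p(a)p(b)=ab$ computed in $A$; hence $\theta(a,b)=ab-p(ab)\in\ker(p)=V$, and the $A$-component of the $E$-product $ab$ is exactly the original product of $A$. Likewise $p(ay)=p(a)p(y)=a\cdot 0=0$ and $p(xb)=0\cdot b=0$, so $a\trr y,\, x\trl b\in V$; and $x\cdot y = xy\in V$ because $V$ is a left-symmetric subalgebra of $E$. Thus $\trr:A\ot V\to V$, $\trl:V\ot A\to V$, $\theta:A\ot A\to V$ and $\cdot:V\ot V\to V$ are well defined.

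Then I would show that the canonical linear isomorphism $\varphi:A\#_{\theta}V\to E$, $\varphi(a,x)=a+x$, is multiplicative. Decomposing the $E$-product of $a+x$ and $b+y$ into its four bilinear pieces and inserting the definitions above gives
\begin{align*}
(a+x)(b+y) &= ab + ay + xb + xy \\
&= p(ab) + \big(\theta(a,b) + a\trr y + x\trl b + x\cdot y\big),
\end{align*}
whose $A$-component is the original product $p(ab)$ and whose $V$-component is $x\cdot y + x\trl b + a\trr y + \theta(a,b)$. This is precisely $\varphi$ applied to the type $(a1)$ product $(a,x)(b,y)=(ab,\, xy+x\trl b+a\trr y+\theta(a,b))$, so $\varphi$ is an isomorphism of (nonassociative) algebras. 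Consequently $A\#_{\theta}V$ inherits the left-symmetric identity from $E$, and by the characterization of the preceding lemma the datum $(\trr,\trl,\theta,\cdot)$ automatically satisfies (A1)--(A6); hence it is a genuine extending datum of type $(a1)$ and $E\cong A\#_{\theta}V$, as claimed.

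The only real subtlety, and the step deserving the most care, is the \emph{well-definedness} check: everything hinges on the two hypotheses — that $p$ is an algebra map (which forces the mixed products $ay$, $xb$ and the defect $ab-p(ab)$ into $V$, and identifies the $A$-component of $ab$ with the given product of $A$) and that $V$ is a subalgebra (which forces $xy\in V$). Once the maps are known to be $V$-valued, the multiplicativity of $\varphi$ and the transport of the compatibility conditions (A1)--(A6) are purely formal.
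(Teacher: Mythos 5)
Your proof is correct and takes essentially the same route as the paper: both reconstruct the extending datum from the product of $E$ via the projection $p$ (setting $\theta(a,b)=a\cdot_E b-p(a\cdot_E b)$, the mixed products giving $\trr$ and $\trl$, and $\cdot_V$ the restriction of $\cdot_E$ to $V$) and then verify that $\varphi(a,x)=a+x$ is an isomorphism onto $A_{}\#_{\theta}V$, so that (A1)--(A6) hold by transport of structure. The only cosmetic difference is that the paper defines $a\trr x:=a\cdot_E x-p(a\cdot_E x)$ and $x\trl a:=x\cdot_E a-p(x\cdot_E a)$, while you observe that these correction terms vanish because $p$ is an algebra homomorphism annihilating $V$; the maps coincide, and your write-up merely makes explicit the well-definedness that the paper dismisses as easy.
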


\begin{proof}
Since $V$ is a left-symmetric subalgebra of $E$, we have $x\cdot_E y\in V$ for all $x, y\in V$.
We define the extending datum of ${A}$ through $V$ by the following formulas:
\begin{eqnarray*}
\trr: {A}\otimes V \to V, \qquad {a} \trr {x} &:=&a\cdot_E x-p(a\cdot_E {x}),\\
\trl: V\otimes {A} \to V, \qquad {x} \trl {a} &:=&{x}\cdot_E a-p({x}\cdot_E a),\\
\theta: A\otimes A \to V, \qquad \theta(a,b) &:=&p(a)\cdot_E p(b)-p \bigl(a\cdot_E b\bigl),\\
{\cdot_V}: V \otimes V \to V, \qquad {x}\cdot_V {y}&:=& {x}\cdot_E{y} .
\end{eqnarray*}
for any $a , b\in {A}$ and $x, y\in V$. It is easy to see that the above maps are  well defined and
$\Omega^{(1)}({A}, V)$ is an extending system of
${A}$ trough $V$ and
\begin{eqnarray*}
\varphi:A_{}\#_{\theta}V\to E, \qquad \varphi(a, x) := a+x
\end{eqnarray*}
is an isomorphism of  left-symmetric algebras.
\end{proof}

\begin{lemma}
Let $\Omega^{(1)}(A, V) = \bigl(\trr,  \, \trl,  \, \theta,  \, \cdot \bigl)$ and $\Omega'^{(1)}(A, V) = \bigl(\trr', \, \trl',  \, \theta',  \, \cdot' \bigl)$
be two algebraic extending datums of ${A}$ by $V$ of type (a1) and $A_{}\#_{\theta} V$, $A_{}\#_{\theta'} V$ be the corresponding unified products. Then there exists a bijection between the set of all homomorphisms of left-symmetric algebras $\varphi: A_{\theta}\#_{\trr, \trl} V\to A_{\theta'}\#_{\trr', \trl'} V$ whose restriction on ${A}$ is the identity map and the set of pairs $(r, s)$, where $r: V\rightarrow {A}$ and $s: V\rightarrow V$ are two linear maps satisfying
\begin{eqnarray}
&&{r}(x\trl a)={r}(x)\cdot' a,\\
&&{r}(a\trr x)= a\cdot'{r}(x),\\
&&a\cdot' b=ab+r\theta(a,  b),\\
&&{r}(xy)={r}(x)\cdot' {r}(y),\\
&&{s}(x)\trl' a+\theta'(r(x), a)={s}(x\trl a),\\
&&a\trr'{s}(y)+\theta'(a, r(y) )={s}(a\trr y),\\
&&\theta'(a, b)=s\theta(a, b),\\
&&{s}(xy)={s}(x)\cdot' {s}(y)+{s}(x)\trl'{r}(y)+{r}(x)\trr'{s}(y)+\theta'(r(x), r(y)).
\end{eqnarray}

for all $a\in{A}$ and $x, y\in V$.

Under the above bijection the homomorphism of  left-symmetric algebras $\varphi=\varphi_{r, s}: A_{}\#_{\theta}V\to A_{}\#_{\theta'} V$ to $(r, s)$ is given  by $\varphi(a, x)=(a+r(x), s(x))$ for all $a\in {A}$ and $x\in V$. Moreover, $\varphi=\varphi_{r, s}$ is an isomorphism if and only if $s: V\rightarrow V$ is a linear isomorphism.
\end{lemma}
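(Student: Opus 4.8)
The plan is to first pin down the shape of any admissible $\varphi$, then convert the homomorphism property into the eight listed relations by a component computation, and finally read off the isomorphism criterion from the block form.

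First I would determine the general form of $\varphi$. Since $E=A\oplus V$ as vector spaces and $\varphi$ restricts to the identity on $A$, we have $\varphi(a,0)=(a,0)$; writing the two components of $\varphi(0,x)$ as $r(x)\in A$ and $s(x)\in V$ defines linear maps $r:V\to A$ and $s:V\to V$, and linearity then forces $\varphi(a,x)=(a+r(x),s(x))$. Conversely, every pair $(r,s)$ yields such a map $\varphi_{r,s}$, and $(r,s)\mapsto\varphi_{r,s}$ is visibly a bijection between pairs of linear maps and the linear endomorphisms of $E$ fixing $A$ pointwise. Hence it remains only to characterise when $\varphi_{r,s}$ is multiplicative.

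The core step is to expand both sides of $\varphi_{r,s}\big((a,x)(b,y)\big)=\varphi_{r,s}(a,x)\,\varphi_{r,s}(b,y)$ using the product of the type $(a1)$ unified product in source and target. On the left, applying $\varphi_{r,s}$ to $(ab,\,xy+x\trl b+a\trr y+\theta(a,b))$ and using linearity of $r$ and $s$ splits each of the two components into four summands. On the right, multiplying $(a+r(x),s(x))$ by $(b+r(y),s(y))$ and expanding by bilinearity of the structure maps $\trr'$, $\trl'$, $\theta'$ and of the two products produces the matching summands together with the cross terms involving $r$. I would then equate the $A$-components and the $V$-components separately. Since $a,b$ range over $A$ and $x,y$ over $V$ independently, the resulting identity separates (by specialising the appropriate arguments to $0$) into pieces indexed by which pair of variables occurs: the $A$-component yields the four relations governing $r$, with the $ab+r\theta(a,b)$ contribution grouped into the auxiliary product $\cdot'$ introduced by the third relation, while the $V$-component yields the four relations governing $s$ — precisely the eight equations in the statement. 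Running this argument in reverse shows the eight relations are also sufficient for multiplicativity.

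Finally, with respect to the decomposition $A\oplus V$ the map $\varphi_{r,s}$ is block upper triangular with diagonal blocks $\id_A$ and $s$, so it is bijective if and only if $s$ is; and when $s$ is invertible its inverse $(a,x)\mapsto(a-r(s^{-1}x),s^{-1}x)$ again fixes $A$ and is automatically a homomorphism, so $\varphi_{r,s}$ is an isomorphism exactly when $s$ is a linear isomorphism. The main obstacle is the bookkeeping in the middle step: keeping the nine cross terms on the right-hand side in correct correspondence with the four summands on the left in each component, and justifying the clean separation of the single mixed identity into the individually listed relations (in particular, correctly absorbing the $r\theta$ terms into $\cdot'$).
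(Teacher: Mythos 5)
Your overall route is the same as the paper's: write $\varphi(a,x)=(a+r(x),s(x))$, expand both sides of multiplicativity in the two unified products, and compare components. Your justification of this block form, and your treatment of the isomorphism criterion (block-triangular with diagonal blocks $\id_A$ and $s$; the inverse of a bijective homomorphism fixing $A$ is again such a homomorphism), are correct and in fact more complete than the paper, whose proof stops after the expansion and never addresses the last claim.

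The problem is your key middle claim that the $A$-component separation yields the listed relations ``with the $ab+r\theta(a,b)$ contribution grouped into the auxiliary product $\cdot'$ introduced by the third relation.'' Both unified products carry the \emph{same} multiplication on the first component, namely the product of $A$ itself: $(a,x)(b,y)=(ab,\dots)$ in source and target alike. So equating $A$-components gives
\begin{equation*}
ab+r(xy)+r(x\trl b)+r(a\trr y)+r\theta(a,b)\;=\;(a+r(x))(b+r(y))\;=\;ab+a\,r(y)+r(x)\,b+r(x)r(y),
\end{equation*}
and setting $x=y=0$ \emph{forces} $r\theta(a,b)=0$; the remaining homogeneous pieces are $r(x\trl b)=r(x)b$, $r(a\trr y)=a\,r(y)$ and $r(xy)=r(x)r(y)$. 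This is the only reading under which conditions (1)--(4) are equivalent to multiplicativity: $\cdot'$ must denote the product of $A$ (the prime is decorative, as in the paper's own expansion of the right-hand side), and (3) is then the \emph{constraint} $r\theta=0$, not a definition. There is no room to ``absorb'' the $r\theta$ terms: if you instead define $a\cdot'b:=ab+r\theta(a,b)$ and formulate (1), (2), (4) with that product, the resulting system is strictly weaker, and your claimed reverse implication (sufficiency of the eight relations) fails. Concretely, let $A$ be one-dimensional with basis $e$, $e^{2}=e$, and $V$ one-dimensional with basis $v$, with $\trr=\trl=0$, zero product on $V$, and $\theta(e,e)=v$ (one checks (A1)--(A6) hold); let the target datum have $\trr'=\trl'=0$, zero product on $V$, and $\theta'=0$; take $r(v)=-e$, $s=0$. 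Then $a\cdot'b=0$ identically and all eight relations hold under the definitional reading, yet $\varphi_{r,s}\bigl((e,0)(e,0)\bigr)=\varphi_{r,s}(e,v)=(0,0)$ while $\varphi_{r,s}(e,0)\,\varphi_{r,s}(e,0)=(e,0)$, so $\varphi_{r,s}$ is not a homomorphism. So when you carry out the separation, the $(a,b)$-piece must be recorded as $r\theta(a,b)=0$; only then do the eight conditions characterize multiplicativity.
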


\begin{proof}
Let $\varphi: A_{}\#_{\theta}V\to A_{ }\#_{\theta'} V$  be an algebra homomorphism  whose restriction on ${A}$ is the identity map. Then $\varphi$ is determined by two linear maps $r: V\rightarrow {A}$ and $s: V\rightarrow V$ such that
$\varphi(a, x)=(a+r(x), s(x))$ for all $a\in {A}$ and $x\in V$.
In fact, we have to show
$$\varphi((a, x)(b, y))=\varphi(a, x)\cdot'\varphi(b, y).$$
The left hand side is equal to
\begin{eqnarray*}
&&\varphi((a, x)(b, y))\\
&=&\varphi\left({ab}, \,  x\trl b+a\trr y+{xy}+\theta(a, b)\right)\\
&=&\big({ab}+ r(x\trl b)+r(a\trr y)+r({xy})+r\theta(a, b), \\
&&\qquad\quad s(x\trl b)+s(a\trr y)+s({xy})+s\theta(a, b)\big),
\end{eqnarray*}
and the right hand side is equal to
\begin{eqnarray*}
&&\varphi(a, x)\cdot' \varphi(b, y)\\
&=&(a+r(x),\,  s(x))\cdot'  (b+r(y), \,  s(y))\\
&=&\big((a+r(x))\cdot' (b+r(y)),  \, \,  s(x)\trl'(b+r(y))+(a+r(x))\trr's(y)\\
&&\qquad\qquad +s(x)\cdot' s(y)+\theta'(a+r(x), b+r(y))\big).
\end{eqnarray*}
Thus $\varphi$ is a homomorphism of algebras if and only if the above conditions hold.
\end{proof}

The second case is when $\theta=0$,  we obtain the following type $(a2)$  unified product for associative algebras which was developed in \cite{AM6}.

\begin{theorem}\cite{AM6}
Let $A$ be a left-symmetric algebra and $V$ be a  vector space. An \textit{extending
datum of $A$ through $V$} of type $(a2)$  is a system $\Omega^{(2)}(A, V) =
\bigl(\triangleleft, \, \triangleright, \, \leftharpoonup, \,
\rightharpoonup, \, \sigma, \, \cdot \bigl)$ consisting of six bilinear maps
\begin{eqnarray*}
&&\ppr: V \otimes A \to A, \quad \ppl: A \otimes V \to A, \quad\triangleleft : V \otimes A \to V,\\
&&\trr: A \otimes V \to V ,
 \quad\sigma: V\otimes V \to A,\quad\cdot: V\ot V\to V.
\end{eqnarray*}
Denote by $A_{\sigma}\# {}_{}H$ the vector space $E={A}\oplus V$ together with the product
\begin{align}
(a, x)(b, y)=\big(ab+x\ppr b+a\ppl y+\sigma(x, y), \, xy+x\trl b+a\trr y\big).
\end{align}
Then $A_{\sigma}\# {}_{}H$  is an algebra if and only if the following compatibility conditions hold for any $a, b, c\in A$,
$x, y, z\in V$:
\begin{enumerate}
\item[(B1)] $[a, b] \trr x  =a\trr (b\trr x)-b\trr (a\trr x)  $,
\item[(B2)] $x \trl (a b) =a \trr (x\trl b) +(x \trl a-a\trr x) \trl b $,
\item[(B3)] $ a \trr (x y) =(a \trr x-x\trl a) y+(a\ppl x-x\ppr a) \trr y+x\trl (a\ppl y)+x(a\trr y)$,
\item[(B4)] $[x, y] \trl a  =x \trl(y \ppr a)-y \trl(x \ppr a)+x(y \trl a)-y(x \trl a)$,
\item[(B5)] $[x, y] \ppr a +(\sigma(x, y)  -\sigma(y, x) )a=x\ppr(y\ppr a)-y\ppr(x\ppr a)+\sigma(x, y\trl a)-\sigma(y, x\trl a)$,
\item[(B6)] $a\ppl (x y)+a \sigma(x, y)=x\ppr (a\ppl y) +(a\ppl x-x\ppr a)\ppl y +\sigma(a \trr x-x\trl a, y)+\sigma(x, a\trr y)$,
\item[(B7)] $x \ppr(a b) =(x\ppr a-a\ppl x) b+(x\trl a-a\trr x) \ppr b+a(x\ppr b)+a\ppl(x\trl b)$,
\item[(B8)] $[a,b]\ppl x  =a(b\ppl x)+a\ppl (b \trr x)-b(a\ppl x)-b\ppl(a\trr x)$,
\item[(B9)] $\sigma(x y, z)-{\sigma}(x, y z)+\sigma(x, y)\ppl z-x \ppr \sigma(y, z)\\
=\sigma(yx, z)-\sigma(y, xz)+\sigma(y, x)\ppl z-y\ppr(x,z),$
\item[(B10)] $(x y) z-x(y z)+\sigma(x, y) \trr z-x \triangleleft \sigma(y, z)=(yx) z-y(x z)+\sigma(y, x) \trr z-y \triangleleft \sigma(x, z)$.
\end{enumerate}
\end{theorem}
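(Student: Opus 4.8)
The plan is to verify directly that the product on $E = A \oplus V$ satisfies the left-symmetric identity
$$\bigl((a,x)(b,y)\bigr)(c,z) - (a,x)\bigl((b,y)(c,z)\bigr) = \bigl((b,y)(a,x)\bigr)(c,z) - (b,y)\bigl((a,x)(c,z)\bigr),$$
and to read off (B1)--(B10) as the exact conditions for this identity to hold for all $a,b,c \in A$ and $x,y,z \in V$. Before carrying out the computation I would record the structural shortcut: type $(a2)$ is precisely the specialization $\theta = 0$ of the cocycle cross product constructed above. Setting $\theta \equiv 0$ collapses (CP1)--(CP8) into (B1)--(B8), turns the cocycle identity (CC5) for the $\sigma$-left-symmetric structure on $V$ into (B10), turns the cocycle condition (CC1) for $\sigma$ into (B9), and reduces the defining identity (CC6) of a $\theta$-left-symmetric algebra to the plain left-symmetry of $A$, which is our hypothesis. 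Thus the statement is an immediate corollary of that theorem, and the direct argument below is just its restriction.

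For a self-contained verification I would expand each of the four triple products by applying the product formula twice. Writing the product as a pair, its first ($A$-valued) slot collects $ab$, $x \ppr b$, $a \ppl y$ and $\sigma(x,y)$, while its second ($V$-valued) slot collects $xy$, $x \trl b$ and $a \trr y$. After the double expansion I would split the whole identity into its $A$-component and its $V$-component, which live in complementary summands and must therefore match independently.

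Collecting the $V$-valued terms, the part with all three entries in $V$ gives the $\sigma$-left-symmetry relation (B10), with $\sigma$ re-entering $V$ through $\trr$ and $\trl$, while the mixed terms involving entries from both $A$ and $V$ produce the module-type compatibilities (B1)--(B4). Collecting the $A$-valued terms, the part with all three entries in $A$ is the left-symmetry of $A$, the part with all three entries in $V$ yields the cocycle condition (B9) (through $\ppr$ and $\ppl$), and the remaining mixed terms give (B5)--(B8). Matching the two sides term by term in each component produces the full list; conversely, assuming (B1)--(B10), the same bookkeeping shows the two sides coincide, which is the ``if'' direction.

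The only genuine obstacle is the volume of bookkeeping: each summand must be tracked according to which of $\ppr$, $\ppl$, $\trr$, $\trl$, $\sigma$ and the two internal multiplications it uses, and according to which component it lands in. There is no conceptual difficulty. The ``only if'' direction is then obtained by specializing the triples $(a,b,c)$ and $(x,y,z)$ so that only one family of terms survives, isolating each condition (B1)--(B10) in turn. I expect the most error-prone point to be the $A$-valued mixed terms coupling $\sigma$ with the actions $\ppr$, $\ppl$ (conditions (B5)--(B8)), where several nearly identical summands must be paired correctly.
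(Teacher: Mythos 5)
Your proposal is correct and takes essentially the same route as the paper: the paper presents this statement (citing AM6) as exactly the $\theta=0$ specialization of its cocycle cross product theorem, whose proof is the same direct expansion of the left-symmetric identity with term-matching in the $A$- and $V$-components that you outline. Your bookkeeping is accurate — the all-$V$ triples yield (B9) in the $A$-component and (B10) in the $V$-component, the mixed triples yield (B1)--(B4) in the $V$-component and (B5)--(B8) in the $A$-component, and the all-$A$ triples reduce to the left-symmetry of $A$, with the pure-triple specialization correctly supplying the ``only if'' direction that the general theorem alone (whose hypotheses include (CC1) and (CC5)) would not give as a black box.
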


\begin{theorem}\cite{AM6}
Let $A$ be a left-symmetric algebra and $E$ be a  vector space containing $A$ as a subspace.
If there is a left-symmetric algebra structure on $E$ such that $A$ is a left-symmetric subalgebra of $E$. Then there exists a left-symmetric algebraic extending structure $\Omega(A, V)^{(2)} = \bigl(\triangleleft, \, \triangleright, \,
\leftharpoonup, \, \rightharpoonup, \, \sigma \bigl)$ of $A$ through $V$ such that there is an isomorphism of   left-symmetric algebras $E\cong A_{\sigma}\#_{}H$.
\end{theorem}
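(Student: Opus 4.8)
The plan is to mirror the reconstruction argument already carried out for the type $(a1)$ product in Lemma~\ref{lem:33-1}, now using the distinguished projection $p\colon E\to A$ (with $p|_{A}=\id_{A}$) to split every product in $E$ into its $A$-component and its $V$-component, where $V:=\ker(p)$ is the chosen complement of $A$ in $E$. Writing $\cdot_{E}$ for the given left-symmetric product on $E$ and $q:=\id_{E}-p$ for the complementary projection onto $V$, every $z\in E$ decomposes uniquely as $z=p(z)+q(z)$ with $p(z)\in A$ and $q(z)\in V$; this idempotent splitting is the only structural input needed.

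First I would define the six bilinear maps of $\Omega^{(2)}(A,V)$ by reading off the two components of the three ``mixed'' products. Explicitly, for $a\in A$ and $x,y\in V$ set
\[
a\ppl x:=p(a\cdot_{E}x),\quad a\trr x:=q(a\cdot_{E}x),\quad
x\ppr a:=p(x\cdot_{E}a),\quad x\trl a:=q(x\cdot_{E}a),
\]
\[
\sigma(x,y):=p(x\cdot_{E}y),\qquad x\cdot_{V}y:=q(x\cdot_{E}y).
\]
These are manifestly well defined and land in the prescribed spaces. The crucial use of the hypothesis that $A$ is a left-symmetric \emph{subalgebra} of $E$ is that $a\cdot_{E}b\in A$ for all $a,b\in A$, so the $V$-component of $a\cdot_{E}b$ vanishes; this is exactly the statement that the would-be cocycle $\theta$ is zero, which is why the datum is of type $(a2)$ rather than $(a1)$ (in the $(a1)$ case it was $V$ that was the subalgebra, forcing $\sigma=0$ instead).

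Next I would verify that $\varphi\colon A_{\sigma}\#_{}H\to E$, $\varphi(a,x):=a+x$, is an isomorphism of left-symmetric algebras. It is a linear bijection because $E=A\oplus V$ as vector spaces. For multiplicativity one expands $(a+x)\cdot_{E}(b+y)$ into the four terms $a\cdot_{E}b$, $a\cdot_{E}y$, $x\cdot_{E}b$, $x\cdot_{E}y$, applies the decomposition above together with $a\cdot_{E}b=ab\in A$, and collects $A$- and $V$-parts: the $A$-part is $ab+a\ppl y+x\ppr b+\sigma(x,y)$ and the $V$-part is $a\trr y+x\trl b+x\cdot_{V}y$, which is precisely $\varphi$ applied to the type $(a2)$ unified product $(a,x)(b,y)$. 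Hence $\varphi$ is multiplicative, and transporting the left-symmetric identity from $E$ through the bijection $\varphi$ shows that $A_{\sigma}\#_{}H$ is a left-symmetric algebra; equivalently, by the preceding characterization, the compatibility conditions (B1)--(B10) hold automatically. This yields the desired isomorphism $E\cong A_{\sigma}\#_{}H$.

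The argument is essentially bookkeeping, so the only real point to get right is the component expansion: I expect the main (though minor) obstacle to be confirming that the $A$-part and $V$-part of $(a+x)\cdot_{E}(b+y)$ assemble exactly into the prescribed unified product, and in particular that the subalgebra hypothesis is invoked precisely to kill the $V$-component of $a\cdot_{E}b$ so that no $\theta$-term appears. Once this is checked, well-definedness and bijectivity are immediate, and the ten conditions (B1)--(B10) need not be verified by hand, since they follow from the left-symmetry of $E$ via the isomorphism $\varphi$.
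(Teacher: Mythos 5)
Your proposal is correct and takes essentially the approach the paper intends: the paper itself cites \cite{AM6} for this statement without reproducing a proof, and your projection-splitting argument (define the six structure maps as the $p$- and $q$-components of the mixed products, invoke the subalgebra hypothesis exactly to kill the would-be cocycle $\theta$, then check that $\varphi(a,x)=a+x$ transports the product) is precisely the argument the paper uses for the type $(a1)$ analogue in Lemma \ref{lem:33-1} and the standard Agore--Militaru argument. In particular, transporting left-symmetry of $\cdot_E$ through the bijection $\varphi$ is a legitimate substitute for verifying (B1)--(B10) by hand.
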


\begin{lemma}\cite{AM6}
Let $\Omega^{(2)}(A, \,  V) = \bigl(\trr, \, \trl,  \, \leftharpoonup,  \,  \rightharpoonup, \,   \sigma,  \,  \cdot \bigl)$ and
$\Omega'^{(2)}(A,  \, V) = \bigl(\trr', \,  \trl', \,  \leftharpoonup', \,   \rightharpoonup', \,  \sigma', \,  \cdot' \bigl)$
be two  algebraic extending structures of $A$ through $V$ and $A{}_{\sigma}\#_{}V$, $A{}_{\sigma'}\#_{}  V$ the  associated unified
products. Then there exists a bijection between the set of all
homomorphisms of algebras $\psi: A{}_{\sigma}\#_{}V\to A{}_{\sigma'}\#_{}  V$ which
stabilize $A$ and the set of pairs $(r, s)$, where $r: V \to A$, $s: V \to V$ are linear maps satisfying the following compatibility conditions for any $x \in A$, $u$, $v \in V$:
\begin{eqnarray}
 &&r(x \cdot y) = r(x)\cdot'r(y) + \sigma ' (s(x), s(y)) - \sigma(x, y) + r(x) \ppl' s(y) + s(x) \ppr' r(y),\\
 &&s(x \cdot y) = r(x) \trr ' s(y) + s(x)\trl ' r(y) + s(x) \cdot ' s(y),\\
 &&r(x\trl  {a}) = r(x)\cdot' {a} - x \ppr {a} + s(x) \ppr' {a},\\
 &&r({a} \trr x) = {a}\cdot'r(x) - {a}\ppl x + {a} \ppl' s(x),\\
 &&s(x\trl {a}) = s(x)\trl' {a},\\
 &&s({a}\trr x) = {a} \trr' s(x).
\end{eqnarray}
Under the above bijection the homomorphism of algebras $\varphi =\varphi _{(r, s)}: A_{\sigma}\# {}_{}H \to A_{\sigma'}\# {}_{}H$ corresponding to
$(r, s)$ is given for any $a\in A$ and $x \in V$ by:
$$\varphi(a, x) = (a + r(x), s(x))$$
Moreover, $\varphi  = \varphi _{(r, s)}$ is an isomorphism if and only if $s: V \to V$ is an isomorphism linear map.
\end{lemma}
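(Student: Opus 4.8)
The plan is to establish the bijection in two stages: first I pin down the shape that any $A$-stabilizing linear map must take, and then I convert the single requirement that such a map be multiplicative into the six displayed relations by evaluating it on products of generators.

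First I would record that a linear map $\varphi\colon A_{\sigma}\# V\to A_{\sigma'}\# V$ stabilizes $A$ precisely when $\varphi(a,0)=(a,0)$ for every $a\in A$. Writing $\varphi(0,x)=(r(x),s(x))$ with components $r\colon V\to A$ and $s\colon V\to V$, linearity forces $\varphi(a,x)=(a+r(x),s(x))$ for all $a\in A$, $x\in V$; conversely every pair $(r,s)$ produces such a $\varphi$. Thus $\varphi\leftrightarrow(r,s)$ is already a bijection between $A$-stabilizing \emph{linear} maps and pairs of linear maps, and it remains only to isolate which pairs give algebra homomorphisms.

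Next I would expand the identity $\varphi\big((a,x)(b,y)\big)=\varphi(a,x)\cdot'\varphi(b,y)$. Because both products are bilinear and $\varphi$ is linear, decomposing $(a,x)=(a,0)+(0,x)$ and $(b,y)=(b,0)+(0,y)$ shows that this identity holds in general if and only if it holds on the four generator pairs $(a,0)(b,0)$, $(a,0)(0,y)$, $(0,x)(b,0)$, $(0,x)(0,y)$. Using the type-$(a2)$ product one computes $(a,0)(b,0)=(ab,0)$, which imposes nothing, together with $(a,0)(0,y)=(a\ppl y,\,a\trr y)$, $(0,x)(b,0)=(x\ppr b,\,x\trl b)$, and $(0,x)(0,y)=(\sigma(x,y),\,xy)$. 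Applying $\varphi$ and the product of $A_{\sigma'}\# V$ to each, then matching the $A$-component and the $V$-component separately, yields exactly the six relations: the pair $(a,0)(0,y)$ gives the two equations for $r(a\trr x)$ and $s(a\trr x)$, the pair $(0,x)(b,0)$ gives those for $r(x\trl a)$ and $s(x\trl a)$, and the pair $(0,x)(0,y)$ gives those for $r(x\cdot y)$ and $s(x\cdot y)$. Reading the same computation backwards shows the six relations are also sufficient, so $\varphi_{(r,s)}$ is an algebra homomorphism if and only if $(r,s)$ satisfies them.

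Finally, for the isomorphism criterion I would exploit the block-triangular form $\varphi(a,x)=(a+r(x),s(x))$, whose diagonal blocks are $\id_A$ and $s$. Any two-sided inverse must again fix $A$ and restrict on $V$ to a two-sided inverse of $s$, so $\varphi$ is bijective exactly when $s$ is; and when $s$ is invertible one checks directly that $(a,y)\mapsto(a-rs^{-1}(y),\,s^{-1}(y))$ is the inverse. The only labor in the argument is the component-by-component matching in the expansion step, and the reduction to the four generator pairs is precisely what confines that bookkeeping to a finite, mechanical check, so I expect no conceptual obstacle beyond keeping the $A$- and $V$-components carefully separated.
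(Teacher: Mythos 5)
Your proposal is correct and takes essentially the same route as the paper: the type-(a2) lemma itself is only cited from \cite{AM6}, but the paper's proofs of its sibling lemmas (the type-(a1) algebra case and the coalgebra cases) proceed exactly as you do, namely parametrizing the $A$-stabilizing maps as $\varphi(a,x)=(a+r(x),s(x))$, expanding $\varphi\big((a,x)(b,y)\big)=\varphi(a,x)\cdot'\varphi(b,y)$, and matching $A$- and $V$-components, with your reduction to the four generator pairs being only a tidier organization of that same computation. Your verification of the isomorphism criterion via the explicit inverse $(a,y)\mapsto\big(a-rs^{-1}(y),\,s^{-1}(y)\big)$ is likewise sound and consistent with the paper's claim.
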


Let ${A}$ be a left-symmetric algebra and $V$ be a vector space. Two algebraic extending systems $\Omega^{(i)}({A}, V)$ and ${\Omega'^{(i)}}({A}, V)$  are called equivalent if $\varphi_{r,s}$ is an isomorphism.  We denote it by $\Omega^{(i)}({A}, V)\equiv{\Omega'^{(i)}}({A}, V)$.
From the above lemmas, we obtain the following result.

\begin{theorem}\label{thm3-1}
Let ${A}$ be a left-symmetric algebra and $E$ be a vector space containing ${A}$ as a subspace and $V$ be a complement of ${A}$ in $E$.
Denote $\mathcal{HA}(V, {A}):=\mathcal{A}^{(1)}({A}, V)\sqcup \mathcal{A}^{(2)}({A}, V) /\equiv$. Then the map
\begin{eqnarray}
\notag&&\Psi: \mathcal{HA}(V, {A})\rightarrow Extd(E, {A}),\\
&&\overline{\Omega^{(1)}({A}, V)}\mapsto A_{}\#_{\theta} V,\quad \overline{\Omega^{(2)}({A}, V)}\mapsto A_{\sigma}\# {}_{} V
\end{eqnarray}
is bijective, where $\overline{\Omega^{(i)}({A}, V)}$ is the equivalence class of $\Omega^{(i)}({A}, V)$ under $\equiv$.
\end{theorem}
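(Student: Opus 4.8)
The plan is to establish that $\Psi$ is a well-defined bijection by treating the three standard stages in turn --- well-definedness, surjectivity, and injectivity --- assembling the realization lemmas and the homomorphism-classification lemmas already proved above. The organizing principle is the dichotomy built into the definition of an extending system: a left-symmetric algebra structure on $E$ lies in $Extd(E,A)$ precisely when either the inclusion $i\colon A\to E$ is an algebra homomorphism (so $A$ is a subalgebra of $E$) or the projection $p\colon E\to A$ is one (so $A$ is a quotient and $V=\ker p$ is a subalgebra). The first alternative is modelled by the type $(a2)$ product $A_{\sigma}\#V$, and the second by the type $(a1)$ product $A\#_{\theta}V$.

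First I would prove surjectivity. Given a class in $Extd(E,A)$ represented by a product $\cdot_E$ on $E$, I split into the two cases above. If $A$ is a subalgebra, the type $(a2)$ realization theorem produces an extending datum of $\mathcal{A}^{(2)}(A,V)$ together with an isomorphism $E\cong A_{\sigma}\#V$ that is the identity on $A$. If instead $p$ is a homomorphism, then $V=\ker p$ is automatically a subalgebra, since $p(x\cdot_E y)=p(x)\,p(y)=0$ for $x,y\in V$, so Lemma \ref{lem:33-1} applies and produces a datum of $\mathcal{A}^{(1)}(A,V)$ with an isomorphism $E\cong A\#_{\theta}V$ fixing $A$. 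In either case the representative lies in the image of $\Psi$, so $\Psi$ is onto.

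Next I would treat well-definedness and injectivity together, since both rest on the two lemmas classifying the homomorphisms $\varphi_{r,s}$ between unified products that restrict to the identity on $A$. The key structural observation is that any left-symmetric algebra isomorphism $\varphi$ stabilizing $A$ fixes $A=A\oplus 0$ pointwise, hence maps $A$ onto $A$ and descends to an isomorphism of $E/A\cong V$; writing $\varphi(a,x)=(a,0)+\varphi(0,x)$ then forces the form $\varphi(a,x)=(a+r(x),s(x))$ with $r\colon V\to A$ linear and $s\colon V\to V$ a linear isomorphism. Thus an equivalence $(E,\cdot)\equiv(E,\cdot')$ of extensions is exactly an isomorphism $\varphi_{r,s}$ of the two unified products with $s$ bijective, which by the classification lemmas is precisely the relation $\equiv$ imposed on the extending data. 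Consequently $\Omega\equiv\Omega'$ if and only if the associated unified products determine the same class in $Extd(E,A)$: the ``only if'' direction gives well-definedness of $\Psi$, and the ``if'' direction gives injectivity.

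The main obstacle I anticipate is the bookkeeping of the disjoint union $\mathcal{A}^{(1)}(A,V)\sqcup\mathcal{A}^{(2)}(A,V)$ under the single equivalence $\equiv$. Here one must check that the $\varphi_{r,s}$-classification is applied consistently across both types and, in particular, account for the overlap at $\theta=0$ and $\sigma=0$, where the two product shapes can coincide. A short a priori computation helps to separate the genuinely distinct cases: if $\varphi_{r,s}$ carries a type $(a1)$ product with $\theta\neq 0$ to a type $(a2)$ product, then evaluating $\varphi$ on $(a,0)(b,0)$ and using that $A$ is a subalgebra on the target forces $s\,\theta(a,b)=0$, hence $\theta=0$ by bijectivity of $s$ --- so no cross-type equivalence survives with nontrivial cocycle, and the quotient on the source of $\Psi$ matches equivalence of extensions on the target. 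Once the form $\varphi(a,x)=(a+r(x),s(x))$ is pinned down and the two homomorphism lemmas are invoked, the remaining verifications are the routine compatibility checks already recorded, and the theorem follows by combining surjectivity with the equivalence ``$\Omega\equiv\Omega'\iff$ same class in $Extd(E,A)$''.
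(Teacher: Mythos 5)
Your proposal is correct and takes essentially the same route the paper intends: the paper obtains Theorem \ref{thm3-1} by combining the two realization results (Lemma \ref{lem:33-1} for the case where $p$ is a homomorphism, and the Agore--Militaru theorem for the case where $A$ is a subalgebra) to get surjectivity, together with the two $\varphi_{r,s}$-classification lemmas, which show that equivalence of extending data coincides with equivalence of the unified products as extensions, giving well-definedness and injectivity. Your extra computation handling the overlap --- that a cross-type isomorphism $\varphi_{r,s}$ forces $s\theta(a,b)=0$, hence $\theta=0$, so the relation $\equiv$ must be read across the disjoint union in order to identify a type $(a1)$ datum with $\theta=0$ with the corresponding type $(a2)$ datum with $\ppr,\ppl,\sigma$ trivial --- addresses a point the paper leaves implicit (its stated $\equiv$ is only within each type), and this identification is genuinely needed for injectivity.
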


\subsection{Extending structures for  left-symmetric coalgebras}

Next we consider the left-symmetric coalgebra structures on $E=A^{P}\# {}^{Q}V$.

There are two cases for $(A, \Delta_A)$ to be a left-symmetric coalgebra. The first case is  when $Q=0$,  then we obtain the following type $(c1)$ unified product for left-symmetric coalgebras.
\begin{lemma}\label{cor02}
Let $({A}, \Delta_A)$ be a left-symmetric coalgebra and $V$  be a vector space.
An  extending datum  of ${A}$ by $V$ of  type $(c1)$ is  $\mathcal{C}^{(1)}({A}, V)=(\phi, \,  {\psi}, \, \rho, \, \gamma, \,  P,  \, \Delta_V)$ with six bilinear maps
\begin{eqnarray*}
&&\phi: A \to V \otimes A, \quad  \psi: A \to A\otimes V,\quad \rho: V  \to A\otimes V,\\
&& \gamma: V \to V \otimes A, \quad {P}: A\rightarrow {V}\otimes {V},\quad\Delta_V: V\rightarrow V\otimes V.
\end{eqnarray*}
 Denote by $A^{P}\# {}^{} V$ the vector space $E={A}\oplus V$ with the linear map
$\Delta_E: E\rightarrow E\otimes E$ given by
$$\Delta_{E}(a)=(\Delta_{A}+\phi+\psi+P)(a),\quad \Delta_{E}(x)=(\Delta_{V}+\rho+\gamma)(x), $$
that is
$$\Delta_{E}(a)= a\li \ot a\lii+ a\moi \ot a\mo+a\mo\ot a\mi+a\ppi\ot a\pii,$$
$$\Delta_{E}(x)= x\li \ot x\lii+ x\boi \ot x\boo+x\boo \ot x\bi.$$
Then $A^{P}\# {}^{} V$  is a left-symmetric coalgebra with the coproduct given above if and only if the following compatibility conditions hold:
\begin{enumerate}

\item[(C1)] $\phi\left(a_{1}\right) \otimes a_{2}+\gamma\left(a_{(-1)}\right) \otimes a_{(0)}-a_{(-1)} \otimes \Delta_{A}\left(a_{(0)}\right) \\
    =\tau_{12}\left(\psi\left(a_{1}\right) \otimes a_{2}+\rho\left(a_{(-1)}\right) \otimes a_{(0)}-a_{1} \otimes \phi(a_{2})-a_{(0)} \otimes \gamma(a_{(1)})\right)$,
\item[(C2)] $\Delta_{A}\left(a_{(0)}\right) \otimes a_{(1)} -a_{1} \otimes \psi\left(a_{2}\right)-a_{(0)} \otimes \rho\left(a_{(1)}\right)\\
    =\tau_{12}\left(\Delta_{A}\left(a_{(0)}\right) \otimes a_{(1)} -a_{1} \otimes \psi\left(a_{2}\right)-a_{(0)} \otimes \rho\left(a_{(1)}\right)\right)$,
\item[(C3)] $\rho\left(x_{1}\right) \otimes x_{2}+\psi\left(x_{[-1]}\right) \otimes x_{[0]}-x_{[-1]} \otimes \Delta_{V}\left( x_{[0]}\right)  \\
    =\tau_{12}\left(\gamma\left(x_{1}\right) \otimes x_{2}+\phi\left(x_{[-1]}\right) \otimes x_{[0]}-x_{[0]} \otimes \psi\left(x_{[1]}\right)-x_{1} \otimes \rho(x_{2})\right)$,
\item[(C4)] $\Delta_{V}(x\boo)\ot x\bi -x\boo\ot \phi(x\bi)-x_1\ot \gamma(x_2)\\
=\tau_{12}\left(\Delta_{V}(x\boo)\ot x\bi -x\boo\ot \phi(x\bi)-x_1\ot \gamma(x_2) \right)$,
\item[(C5)] $\Delta_{V}\left(a_{(-1)}\right) \otimes a_{(0)}+P\left(a_{1}\right) \otimes a_{2}-a_{(-1)} \otimes \phi\left(a_{(0)}\right)-a\ppi\otimes \gamma\left(a\pii\right)\\
    =\tau_{12}\left( \Delta_{V}\left(a_{(-1)}\right) \otimes a_{(0)}+P\left(a_{1}\right) \otimes a_{2}-a_{(-1)} \otimes \phi\left(a_{(0)}\right)-a\ppi\otimes \gamma\left(a\pii\right) \right)$,
\item[(C6)] $a_{(0)}\ot \Delta_{V}\left(a_{(1)}\right)+a_{1}\otimes P\left(a_{2}\right)-\psi\left(a_{(0)}\right) \otimes a_{(1)}-\rho\left(a\ppi\right) \otimes a\pii\\
    =\tau_{12}\left(  a_{(-1)} \otimes \psi\left(a_{(0)}\right)+a\ppi \otimes \rho\left(a\pii\right)-\phi\left(a_{(0)}\right) \otimes a_{(1)}-\gamma\left(a\ppi\right) \otimes a\pii \right)$,
\item[(C7)] $x_{[-1]} \otimes \rho\left(x_{[0]}\right) )-\Delta_{A}\left(x_{[-1]}\right) \otimes x_{[0]}=\tau_{12}\left( x_{[-1]} \otimes \rho\left(x_{[0]}\right) -\Delta_{A}\left(x_{[-1]}\right) \otimes x_{[0]}\right) $,
\item[(C8)] $x\boo\ot\Delta_A(x\bi) -\gamma(x\boo)\ot x\bi=\tau_{12}\left(x\boi\ot\gamma(x\boo)-\rho(x\boo)\ot x\bi \right)$,

\item[(C9)]  $\Delta_V(a\ppi)\ot a\pii-a\ppi\ot \Delta_V(a\pii)+P(a\lmoo)\ot a\mi-a\lmoi\ot P(a\lmoo)\\
=\tau_{12}\left(\Delta_V(a\ppi)\ot a\pii-a\ppi\ot \Delta_V(a\pii)+P(a\lmoo)\ot a\mi-a\lmoi\ot P(a\lmoo)\right)$.

\item[(C10)] $\Delta_V(x\li)\ot x\lii-x_1\ot \Delta_V(x_2)+ P(x\boi) \ot x\boo-x\boo\ot P(x\bi)\\
=\tau_{12} \left(\Delta_V(x\li)\ot x\lii-x_1\ot \Delta_V(x_2)+ P(x\boi) \ot x\boo-x\boo\ot P(x\bi) \right)$.
\end{enumerate}
\end{lemma}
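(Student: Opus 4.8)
The plan is to verify the defining left-symmetric coidentity for $\Delta_E$ on $E=A\oplus V$, that is
$$(\id-\tau_{12})\Big((\Delta_E\ot\id)\Delta_E(z)-(\id\ot\Delta_E)\Delta_E(z)\Big)=0\quad\text{for all }z\in E,$$
and to extract (C1)--(C10) by comparing the two sides componentwise. Since $E^{\ot 3}=(A\oplus V)^{\ot 3}$ decomposes into the eight homogeneous pieces $A^{\ot 3},A^{\ot 2}\ot V,\dots,V^{\ot 3}$, both iterated coproducts split accordingly, and the identity holds if and only if it holds in each piece. I would treat the two cases $z=a\in A$ and $z=x\in V$ separately, expanding $\Delta_E(a)=\Delta_A(a)+\phi(a)+\psi(a)+P(a)$ and $\Delta_E(x)=\Delta_V(x)+\rho(x)+\gamma(x)$ and then applying $\Delta_E\ot\id$ and $\id\ot\Delta_E$.

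The computation is nothing but the specialization $Q=0$, $H=V$, $\Delta_H=\Delta_V$ of Lemma \ref{lem2}. Setting $Q=0$ annihilates every summand in the expansion carried out there that contains a factor of $Q$, and the surviving system conditions (CCP1)--(CCP8) collapse term-by-term onto (C1)--(C8); the pieces of $E^{\ot 3}$ mixing $A$ and $V$ are governed exactly by these. Concretely, the pieces $V\ot A\ot A$, $A\ot A\ot V$, $V\ot V\ot A$ and $A\ot V\ot V$, paired with their $\tau_{12}$-partners $A\ot V\ot A$ and $V\ot A\ot V$, yield (C1), (C2), (C5), (C6) when $z=a$ and (C8), (C7), (C4), (C3) when $z=x$. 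Rather than recopy these terms I would simply cite the corresponding part of the proof of Lemma \ref{lem2}.

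The genuine content then lies in the two pure pieces. The $A^{\ot 3}$ piece receives contributions only when $z=a$, and after $(\id-\tau_{12})$ it reduces to $\Delta_A(a\li)\ot a\lii-a\li\ot\Delta_A(a\lii)$ equalling its $\tau_{12}$-flip, i.e.\ to the left-symmetric coidentity of $(A,\Delta_A)$; this holds by hypothesis and imposes no new constraint. The $V^{\ot 3}$ piece splits by source: for $z=a$ it couples $\Delta_V$ on $P(a)$ against $P$ on $\phi(a),\psi(a)$ and produces exactly the cycle identity (C9), while for $z=x$ it couples $\Delta_V$ against $\rho,\gamma$ and $P$ and produces (C10). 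Assembling the ten piecewise equalities and requiring each to hold forces precisely (C1)--(C10); conversely, granting (C1)--(C10) makes every homogeneous piece vanish, which yields the stated equivalence.

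I expect the only real difficulty to be organizational. Each of $\phi,\psi,\rho,\gamma,P$ deposits its two outputs in prescribed but distinct $A$- or $V$-slots, so a single $\tau_{12}$ moves a term from one of the eight pieces into another; one must verify that within each fixed piece the flipped contributions of the two sides are matched correctly before cancelling. This slot- and index-tracking is entirely routine and runs in parallel with the computation already performed for Lemma \ref{lem2}.
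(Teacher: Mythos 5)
Your proposal is correct and matches the paper's treatment: the paper gives no separate proof of this lemma, handling it exactly as you do, namely as the $Q=0$ specialization of the direct expansion carried out in the proof of Lemma \ref{lem2}, with (C1)--(C8) arising from (CCP1)--(CCP8) and (C9), (C10) being the surviving cycle conditions. Your decomposition of $E^{\otimes 3}$ into the eight homogeneous pieces---with the pure $A^{\otimes 3}$ piece absorbed by the hypothesis on $\Delta_A$ and the $V^{\otimes 3}$ piece producing (C9) and (C10)---is a correct organization of that same computation.
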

Denote the set of all  coalgebraic extending datum of ${A}$ by $V$ of type $(c1)$ by $\mathcal{C}^{(1)}({A}, V)$.

\begin{lemma}\label{lem:33-3}
Let $({A}, \Delta_A)$ be a left-symmetric coalgebra and $E$  a vector space containing ${A}$ as a subspace. Suppose that there is a  left-symmetric  coalgebra structure $(E, \Delta_E)$ on $E$ such that  $p: E\to {A}$ is a  left-symmetric  coalgebra homomorphism. Then there exists a  coalgebraic extending system $\mathcal{C}^{(1)}({A}, V)$ of $({A}, \Delta_A)$ by $V$ such that $(E, \Delta_E)\cong A^{P}\# {}^{} V$.
\end{lemma}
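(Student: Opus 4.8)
The plan is to dualize the construction used in Lemma \ref{lem:33-1}: rather than reading off structure maps from a product on $E$, I would read off the six coalgebraic maps from the given coproduct $\Delta_E$ by splitting each tensor leg according to the direct sum decomposition $E = A \oplus V$. First I would introduce the complementary projection $q := \id_E - p : E \to E$; since $p$ is an idempotent projection with $p|_A = \id_A$ and $V = \ker p$, we have $\id_E = p + q$ with $\im q = V$, and consequently a decomposition of $E \ot E$ into four summands realized by the projectors $p\ot p$, $p\ot q$, $q\ot p$ and $q\ot q$, with images in $A\ot A$, $A\ot V$, $V\ot A$ and $V\ot V$ respectively.

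Next I would define the extending datum $\mathcal{C}^{(1)}(A,V) = (\phi,\psi,\rho,\gamma,P,\Delta_V)$ as the corresponding components of $\Delta_E$, setting $\phi(a) := (q\ot p)\Delta_E(a)$, $\psi(a) := (p\ot q)\Delta_E(a)$, $P(a) := (q\ot q)\Delta_E(a)$ for $a\in A$, and $\rho(x) := (p\ot q)\Delta_E(x)$, $\gamma(x) := (q\ot p)\Delta_E(x)$, $\Delta_V(x) := (q\ot q)\Delta_E(x)$ for $x\in V$; by inspection these land in the spaces prescribed by the type $(c1)$ datum. The decisive step, and the only place the coalgebra-homomorphism hypothesis on $p$ is used, is the computation of the remaining $A\ot A$ components via $(p\ot p)\Delta_E = \Delta_A\circ p$. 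For $a\in A$ this gives $(p\ot p)\Delta_E(a) = \Delta_A(a)$, whereas for $x\in V = \ker p$ it gives $(p\ot p)\Delta_E(x) = \Delta_A(p(x)) = 0$. This last vanishing is precisely the assertion that the cycle $Q$ is zero, so the datum is genuinely of type $(c1)$ and no $A\ot A$ summand survives in $\Delta_E(x)$; this is the subtle point, where one might expect an extra ``$V$ a subcoalgebra'' hypothesis but it is instead furnished automatically by $p$ being a coalgebra map.

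Summing the four components then yields exactly
\[
\Delta_E(a) = \Delta_A(a) + \phi(a) + \psi(a) + P(a), \qquad \Delta_E(x) = \rho(x) + \gamma(x) + \Delta_V(x),
\]
which is the coproduct of $A^{P}\# {}^{} V$. Since $(E,\Delta_E)$ is assumed to be a left-symmetric coalgebra, the if-and-only-if characterization of Lemma \ref{cor02} forces the compatibility conditions (C1)--(C10) on $(\phi,\psi,\rho,\gamma,P,\Delta_V)$, so $\mathcal{C}^{(1)}(A,V)$ is indeed a coalgebraic extending system. Finally I would exhibit $\varphi : A^{P}\# {}^{} V \to E$, $\varphi(a,x) := a + x$: this is the canonical linear isomorphism arising from $E = A \oplus V$, and the displayed decomposition of $\Delta_E$ shows at once that $(\varphi\ot\varphi)\Delta_{A^{P}\# V} = \Delta_E\circ\varphi$, so $\varphi$ is an isomorphism of left-symmetric coalgebras. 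I expect the only genuinely delicate point to be the fourfold tensor bookkeeping together with the verification that $Q=0$; the remainder is routine and strictly dual to Lemma \ref{lem:33-1}.
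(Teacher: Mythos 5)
Your proposal is correct and follows essentially the same route as the paper: both read off the six structure maps as the components of $\Delta_E$ under the projections onto $A$ and $V$, observe that summing them recovers the unified coproduct, and take $\varphi(a,x)=a+x$ as the isomorphism. You are in fact slightly more careful than the paper, which also defines $Q(x)=(p\otimes p)\Delta_E(x)$ but never remarks that the hypothesis that $p$ is a coalgebra homomorphism forces $Q=0$ (and $(p\otimes p)\Delta_E(a)=\Delta_A(a)$) — precisely the point you verify explicitly.
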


\begin{proof}
Let $p: E\to {A}$ and $\pi: E\to V$ be the projection map and $V=\ker({p})$.
Then the extending datum of $({A}, \Delta_A)$ by $V$ is defined as follows:
\begin{eqnarray*}
&&{\phi}: A\rightarrow V\ot {A},~~~~{\phi}(a)=(\pi\otimes {p})\Delta_E(a),\\
&&{\psi}: A\rightarrow A\ot V,~~~~{\psi}(a)=({p}\otimes \pi)\Delta_E(a),\\
&&{\rho}: V\rightarrow A\ot V,~~~~{\rho}(x)=({p}\otimes \pi)\Delta_E(x),\\
&&{\gamma}: V\rightarrow V\ot {A},~~~~{\gamma}(x)=(\pi\otimes {p})\Delta_E(x),\\
&&\Delta_V: V\rightarrow V\otimes V,~~~~\Delta_V(x)=(\pi\otimes \pi)\Delta_E(x),\\
&&Q: V\rightarrow {A}\otimes {A},~~~~Q(x)=({p}\otimes {p})\Delta_E(x)\\
&&P: A\rightarrow {V}\otimes {V},~~~~P(a)=({\pi}\otimes {\pi})\Delta_E(a).
\end{eqnarray*}
One check that  $\varphi: A^{P}\# {}^{} V\to E$ given by $\varphi(a, x)=a+x$ for all $a\in A, x\in V$ is a  left-symmetric  coalgebra isomorphism.
\end{proof}

\begin{lemma}\label{lem-c1}
Let $\mathcal{C}^{(1)}({A}, V)=(\phi, \, \psi, \, \rho, \, \gamma, \,  P, \, \Delta_V)$ and ${\mathcal{C}'^{(1)}}({A}, V)=(\phi', \,  \psi', \, \rho', \, \gamma', \,   P', \,  \Delta'_V)$ be two  left-symmetric  coalgebraic extending datums of $({A}, \, \Delta_A)$ by $V$. Then there exists a bijection between the set of  left-symmetric   coalgebra homomorphisms $\varphi: A^{P}\# {}^{} V\rightarrow A^{P'}\# {}^{} V$ whose restriction on ${A}$ is the identity map and the set of pairs $(r, s)$, where $r: V\rightarrow {A}$ and $s: V\rightarrow V$ are two linear maps satisfying
\begin{eqnarray}
\label{comorph11}&&P'(a)=s(a\ppi)\ot s(a\pii),\\
\label{comorph121}&&\phi'(a)={s}(a\lmoi)\ot a\lmo+s(a\ppi)\ot r(a\pii),\\
\label{comorph122}&&\psi'(a)=a\lmo\ot {s}(a\mi) +r(a\ppi)\ot s(a\pii),\\
\label{comorph13}&&\Delta'_A(a)=\Delta_A(a)+{r}(a\lmoi)\ot a\lmo+a\lmo\ot {r}(a\mi)+r(a\ppi)\ot r(a\pii)\\
\label{comorph21}&&\Delta_V'({s}(x))+P'(r(x))=({s}\otimes {s})\Delta_V(x),\\
\label{comorph221}&&{\rho}'({s}(x))+\psi'(r(x))=r(x\li)\ot s(x\lii)+x\boi\ot s(x\boo),\\
\label{comorph222}&&{\gamma}'({s}(x))+\phi'(r(x))=s(x\li)\ot r(x\lii)+s(x\boo)\ot x\bi,\\
\label{comorph23}&&\Delta'_A({r}(x))=r(x\li)\ot r(x\lii)+x\boi\ot r(x\boo)+r(x\boo)\ot x\bi.
\end{eqnarray}
Under the above bijection the  left-symmetric  coalgebra homomorphism $\varphi=\varphi_{r, s}: A^{P}\# {}^{} V\rightarrow A^{P'}\# {}^{} V$ to $(r, s)$ is given by $\varphi(a, x)=(a+r(x), s(x))$ for all $a\in {A}$ and $x\in V$. Moreover, $\varphi=\varphi_{r, s}$ is an isomorphism if and only if $s: V\rightarrow V$ is a linear isomorphism.
\end{lemma}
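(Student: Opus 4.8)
The plan is to prove this last lemma exactly in the style of the two companion lemmas already proved in this subsection (Lemma~\ref{lem:33-1} and its coalgebra counterpart Lemma~\ref{lem:33-3}): set up the correspondence between homomorphisms and pairs $(r,s)$, and then translate the single equation $\varphi\circ\Delta_E=(\varphi\otimes\varphi)\circ\Delta_E$ into the eight component identities \eqref{comorph11}--\eqref{comorph23}. First I would observe that any linear $\varphi:A^{P}\#^{}V\to A^{P'}\#^{}V$ that restricts to $\id_A$ on $A$ is forced to have the form $\varphi(a,x)=(a+r(x),s(x))$ for uniquely determined linear maps $r:V\to A$ and $s:V\to V$, simply by decomposing $\varphi|_V$ along the direct sum $E=A\oplus V$. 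This gives the asserted bijection between such $\varphi$ and pairs $(r,s)$ at the level of underlying linear maps; the content is then to determine precisely when such a $\varphi$ is a coalgebra homomorphism.

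\textbf{Key computational step.} The heart of the argument is the identity $\Delta_E'\circ\varphi=(\varphi\otimes\varphi)\circ\Delta_E$, which I would verify by evaluating both sides separately on a generic $a\in A$ and on a generic $x\in V$. On the $A$-slot, using $\Delta_E(a)=\Delta_A(a)+\phi(a)+\psi(a)+P(a)$ (so with legs in $A\otimes A$, $V\otimes A$, $A\otimes V$, $V\otimes V$ respectively), I expand $(\varphi\otimes\varphi)\Delta_E(a)$ by applying $\varphi=\id_A+r+s$ in each tensor factor and then sort the resulting terms according to which of the four summands $A\otimes A$, $V\otimes A$, $A\otimes V$, $V\otimes V$ of $E\otimes E$ they land in. Comparing the $V\otimes V$-component yields \eqref{comorph11}, the $V\otimes A$-component yields \eqref{comorph121}, the $A\otimes V$-component yields \eqref{comorph122}, and the $A\otimes A$-component yields \eqref{comorph13}. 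Repeating the same bookkeeping on a generic $x\in V$, where $\Delta_E(x)=\Delta_V(x)+\rho(x)+\gamma(x)$ (and on the target side $\Delta_E'(s(x))$ also produces a $P'(r(x))$ contribution coming from the $A\to V\otimes V$ part applied to $r(x)$), produces \eqref{comorph21}--\eqref{comorph23} by the analogous four-way comparison. The point is that each of the four isotypic components of $E\otimes E$ must match independently, and this is exactly what converts the one coalgebra-homomorphism equation into the eight listed relations.

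\textbf{Finishing and the isomorphism claim.} Once the eight identities are shown equivalent to $\varphi$ being a coalgebra map, the final assertion is that $\varphi=\varphi_{r,s}$ is an isomorphism iff $s:V\to V$ is a linear isomorphism. Here I would argue that in the basis adapted to $E=A\oplus V$ the map $\varphi$ is block-upper-triangular, namely $\varphi(a,x)=(a+r(x),s(x))$ restricts to the identity on $A$ and induces $s$ on the quotient $E/A\cong V$; hence $\varphi$ is bijective precisely when the induced map $s$ on $V$ is bijective. Concretely, $\varphi$ has ``matrix'' $\left(\begin{smallmatrix}\id_A & r\\ 0 & s\end{smallmatrix}\right)$, which is invertible iff $s$ is, and the inverse again stabilizes $A$, so it lies in the same family.

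\textbf{Main obstacle.} The conceptual steps are routine; the real difficulty is purely organizational bookkeeping. Each side of $\Delta_E'\varphi=(\varphi\otimes\varphi)\Delta_E$ expands into many terms carrying the Sweedler-type indices $a\li,a\lii,a\lmoi,a\mi,a\ppi,a\pii$ (and the $x$-analogues), and the crux is to sort every term correctly into its $A\otimes A$, $V\otimes A$, $A\otimes V$, or $V\otimes V$ component without miscounting the mixed contributions—in particular the cross-terms where $r$ lands a $V$-input back into $A$ (feeding $P'$, $\phi'$, $\psi'$, $\Delta_A'$) versus where $s$ keeps it in $V$. I expect the most error-prone point to be the two ``mixed'' relations \eqref{comorph122}/\eqref{comorph221} and \eqref{comorph121}/\eqref{comorph222}, where both a $\phi$/$\psi$ (or $\rho$/$\gamma$) leg and a $P$/$\Delta_V$ leg contribute to the same isotypic slot, so that the correct identity is a sum of two pieces rather than a single matching of legs; getting the placement of $r$ and $s$ right in these is the delicate part.
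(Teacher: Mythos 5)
The proposal is correct and follows essentially the same route as the paper: it writes $\varphi(a,x)=(a+r(x),s(x))$, expands $\Delta'_E\varphi=(\varphi\otimes\varphi)\Delta_E$ separately on $a\in A$ and $x\in V$, and matches the components in $A\otimes A$, $A\otimes V$, $V\otimes A$, $V\otimes V$ to obtain the eight conditions, with the isomorphism criterion coming from the triangular form of $\varphi$. The student's component-by-component bookkeeping (including the $P'(r(x))$ contribution on the left-hand side for $x\in V$) is exactly the comparison carried out in the paper's proof.
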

\begin{proof}
Let $\varphi: A^{P}\# {}^{} V\rightarrow A^{P'}\# {}^{} V$  be a   left-symmetric coalgebra homomorphism  whose restriction on ${A}$ is the identity map. Then $\varphi$ is determined by two linear maps $r: V\rightarrow {A}$ and $s: V\rightarrow V$ such that
$\varphi(a, x)=(a+r(x), s(x))$ for all $a\in {A}$ and $x\in V$. We will prove that
$\varphi$ is a homomorphism of  left-symmetric  coalgebras if and only if the above conditions hold.
First it is easy to see that  $\Delta'_E\varphi(a)=(\varphi\otimes \varphi)\Delta_E(a)$ for all $a\in {A}$.
\begin{eqnarray*}
\Delta'_E\varphi(a)&=&\Delta'_E(a)=\Delta'_A(a)+\phi'(a)+\psi'(a)+P'(a),
\end{eqnarray*}
and
\begin{eqnarray*}
&&(\varphi\otimes \varphi)\Delta_E(a)\\
&=&(\varphi\otimes \varphi)\left(\Delta_A(a)+\phi(a)+\psi(a)+P(a)\right)\\
&=&\Delta_A(a)+{r}(a\lmoi)\ot a\lmo+{s}(a\lmoi)\ot a\lmo+a\lmo\ot {r}(a\mi) +a\lmo\ot {s}(a\mi)\\
&&+r(a\ppi)\ot r(a\pii)+r(a\ppi)\ot s(a\pii)+s(a\ppi)\ot r(a\pii)+s(a\ppi)\ot s(a\pii).
\end{eqnarray*}
Thus we obtain that $\Delta'_E\varphi(a)=(\varphi\otimes \varphi)\Delta_E(a)$  if and only if the conditions \eqref{comorph11}, \eqref{comorph121}, \eqref{comorph122} and \eqref{comorph13} hold.
Then we consider that $\Delta'_E\varphi(x)=(\varphi\otimes \varphi)\Delta_E(x)$ for all $x\in V$.
\begin{eqnarray*}
\Delta'_E\varphi(x)&=&\Delta'_E({r}(x),{s}(x))=\Delta'_E({r}(x))+\Delta'_E({s}(x))\\
&=&\Delta'_A({r}(x))+\phi'(r(x))+\psi'(r(x))+P(r(x))+\Delta'_V({s}(x))+{\rho}'({s}(x))+{\gamma}'({s}(x))),
\end{eqnarray*}
and
\begin{eqnarray*}
&&(\varphi\otimes \varphi)\Delta_E(x)\\
&=&(\varphi\otimes \varphi)(\Delta_V(x)+{\rho}(x)+{\gamma}(x))\\
&=&r(x\li)\ot r(x\lii)+r(x\li)\ot s(x\lii)+s(x\li)\ot r(x\lii)+s(x\li)\ot s(x\lii)\\
&&+x\boi\ot r(x\boo)+x\boi\ot s(x\boo)+r(x\boo)\ot x\bi+s(x\boo)\ot x\bi.
\end{eqnarray*}
Thus we obtain that $\Delta'_E\varphi(x)=(\varphi\otimes \varphi)\Delta_E(x)$ if and only if the conditions  \eqref{comorph21},  \eqref{comorph221},  \eqref{comorph222} and \eqref{comorph23}  hold. By definition, we obtain that $\varphi=\varphi_{r, s}$ is an isomorphism if and only if $s: V\rightarrow V$ is a linear isomorphism.
\end{proof}

The second case is $\phi=0$ and  $\psi=0$,and we get $P=0$ when $Q\neq 0$ from (CCP1). We obtain  the following type (c2) unified coproduct for  coalgebras.
\begin{lemma}\label{cor02}
Let $({A}, \, \Delta_A)$ be a  left-symmetric  coalgebra and $V$ be a vector space.
An  extending datum  of $({A}, \, \Delta_A)$ by $V$ of type $(c2)$  is   $\mathcal{C}^{(2)}({A}, V)=(\rho, \, \gamma, \, {Q}, \, \Delta_V)$ with  linear maps
\begin{eqnarray*}
&&\rho: V  \to A\otimes V,\quad  \gamma: V \to V \otimes A,\quad Q: V \to A\otimes A,\quad\Delta_{V}: V \to V\otimes V.
\end{eqnarray*}
 Denote by $A^{}\# {}^{Q} V$ the vector space $E={A}\oplus V$ with the coproduct
$\Delta_E: E\rightarrow E\otimes E$ given by
\begin{eqnarray*}
\Delta_{E}(a)&=&\Delta_{A}(a),\quad \Delta_{E}(x)=(\Delta_{V}+\rho+\gamma+Q)(x), \\
\Delta_{E}(a)&=& a\li \ot a\lii,\quad \Delta_{E}(x)= x\li \ot x\lii+ x\boi \ot x\boo+x\boo \ot x\bi+x\qi\ot x\qii.
\end{eqnarray*}
Then $A^{}\# {}^{Q} V$  is a left-symmetric   coalgebra with the coproduct given above if and only if the following compatibility conditions hold:

\begin{enumerate}
\item[(D1)] $\rho\left(x_{1}\right) \otimes x_{2} -x_{[-1]} \otimes \Delta_{V}\left( x_{[0]}\right)=\tau_{12}\left(\gamma\left(x_{1}\right) \otimes x_{2} -x_{1} \otimes \rho(x_{2})\right)$,

\item[(D2)] $\Delta_{V}(x\boo)\ot x\bi -x_1\ot \gamma(x_2)=\tau_{12}\left(\Delta_{V}(x\boo)\ot x\bi -x_1\ot \gamma(x_2) \right)$,

\item[(D3)] $x_{[-1]} \otimes \rho\left(x_{[0]}\right) -\Delta_{A}\left(x_{[-1]}\right) \otimes x_{[0]}-Q\left(x_{1}\right) \otimes x_{2}\\
    =\tau_{12}\left( x_{[-1]} \otimes \rho\left(x_{[0]}\right) -\Delta_{A}\left(x_{[-1]}\right) \otimes x_{[0]}-Q\left(x_{1}\right) \otimes x_{2} \right)$,

\item[(D4)] $x\boo\ot\Delta_A(x\bi)+x_1\ot Q(x_2)-\gamma(x\boo)\ot x\bi=\tau_{12}\left(x\boi\ot\gamma(x\boo)-\rho(x\boo)\ot x\bi \right)$,

\item[(D5)]  $\Delta_A(x\qi)\ot x\qii-x\qi\ot \Delta_A(x\qii)+Q(x\boo)\ot x\bi-x\boi\ot Q(x\boo)\\
=\tau_{12}\left( \Delta_A(x\qi)\ot x\qii-x\qi\ot \Delta_A(x\qii)+Q(x\boo)\ot x\bi-x\boi\ot Q(x\boo)\right)$,

\item[(D6)] $\Delta_V(x\li)\ot x\lii-x_1\ot \Delta_V(x_2) =\tau_{12} \left(\Delta_V(x\li)\ot x\lii-x_1\ot \Delta_V(x_2) \right)$.
\end{enumerate}
\end{lemma}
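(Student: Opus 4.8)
The plan is to verify the left-symmetric coalgebra axiom
\[
(\Delta_{E} \ot \id)\Delta_{E} - (\id \ot \Delta_{E})\Delta_{E} = \tau_{12}\big((\Delta_{E} \ot \id)\Delta_{E} - (\id \ot \Delta_{E})\Delta_{E}\big)
\]
directly on $E=A\oplus V$, evaluating it separately on $a\in A$ and on $x\in V$. For $a\in A$ one has $\Delta_{E}(a)=\Delta_{A}(a)=a\li\ot a\lii$ with both legs in $A$, so $\Delta_{E}$ restricts to $\Delta_{A}$ on each leg and the axiom collapses to the left-symmetric coalgebra identity for $(A,\Delta_A)$, which holds by hypothesis. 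Hence the summand $A$ contributes no new condition, and all the content comes from $x\in V$.

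For $x\in V$ I would expand $\Delta_{E}(x)=x\li\ot x\lii+x\boi\ot x\boo+x\boo\ot x\bi+x\qi\ot x\qii$ and then apply $\Delta_{E}\ot\id$ and $\id\ot\Delta_{E}$, using that $\Delta_{E}$ restricts to $\Delta_{A}$ on the $A$-legs ($x\boi,x\bi,x\qi,x\qii$) and to $\Delta_{V}+\rho+\gamma+Q$ on the $V$-legs ($x\li,x\lii,x\boo$). The decisive observation is that $E^{\ot 3}=(A\oplus V)^{\ot 3}$ carries a grading by the eight ``type'' signatures recording, for each tensor factor, whether it lies in $A$ or in $V$, and every term produced above is homogeneous for this grading. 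Since $\tau_{12}$ only permutes the first two factors, it preserves this grading up to the involution swapping the first two letters of a signature; therefore the single axiom splits into one independent equation for each $\tau_{12}$-orbit of signatures.

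Reading off the orbits then finishes the argument. The four signatures fixed by $\tau_{12}$ (first two letters equal) give the four self-dual identities: $VVV$ yields (D6), $VVA$ yields (D2), $AAV$ yields (D3), and $AAA$ yields (D5). The two genuine two-element orbits $\{AVV,VAV\}$ and $\{AVA,VAA\}$ each contribute a single identity relating the two signatures through $\tau_{12}$, producing (D1) and (D4). For example, matching the $AVV$-part $\rho(x\li)\ot x\lii-x\boi\ot\Delta_{V}(x\boo)$ against the $\tau_{12}$-image of the $VAV$-part $\gamma(x\li)\ot x\lii-x\li\ot\rho(x\lii)$ reproduces exactly (D1), and the analogous matching for $\{AVA,VAA\}$ gives (D4). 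Collecting the six orbit-equations yields precisely (D1)--(D6), establishing both implications simultaneously.

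The hard part here is not conceptual but organizational: ensuring that every summand is assigned to the correct type signature and that none is dropped, which is exactly what the $A/V$-grading is designed to control, so I expect no real obstacle. A useful shortcut is that this lemma is the specialization of Lemma~\ref{lem2} to $\phi=\psi=P=0$ with the role of $H$ played by $V$ and $\Delta_{H}=\Delta_{V}$: under this choice the $a$-indexed conditions (CCP1), (CCP2), (CCP5), (CCP6) become vacuous, while (CCP3), (CCP4), (CCP7), (CCP8) collapse to (D1)--(D4), and the residual cycle and coalgebra axioms for $Q$ and $\Delta_{V}$ give (D5) and (D6). The only subtlety worth checking in that route is the consistency of setting these three maps to zero, which is where (CCP1) forces $P=0$ once $Q\neq 0$, as already noted preceding the statement.
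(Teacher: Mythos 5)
Your proposal is correct and takes essentially the same route as the paper: the paper presents this lemma exactly as the specialization of the cycle cross coproduct Lemma~\ref{lem2} to $\phi=\psi=P=0$ (with $H=V$), and that general lemma is itself proved by the same direct expansion of $(\Delta_E\ot\id)\Delta_E-(\id\ot\Delta_E)\Delta_E$ that you organize via the $A/V$-signature grading. Your orbit bookkeeping — (D6), (D2), (D3), (D5) from the $\tau_{12}$-fixed signatures and (D1), (D4) from the two swapped pairs — matches precisely the surviving conditions (CCP3), (CCP4), (CCP7), (CCP8) together with the residual cycle axiom (CC4) for $Q$ and the coalgebra axiom for $\Delta_V$.
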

Note that in this case $(V, \, \Delta_V)$ is a left-symmetric coalgebra.

Denote the set of all left-symmetric coalgebraic extending datum of ${A}$ by $V$ of type $(c2)$ by $\mathcal{C}^{(2)}({A}, V)$.

Similar to the left-symmetric  algebra case,  one  show that any  left-symmetric  coalgebra structure on $E$ containing ${A}$ as a  left-symmetric  subcoalgebra is isomorphic to such an unified coproduct.
\begin{lemma}\label{lem:33-4}
Let $({A}, \, \Delta_A)$ be a   left-symmetric coalgebra and $E$  a vector space containing ${A}$ as a subspace. Suppose that there is a  left-symmetric  coalgebra structure $(E, \, \Delta_E)$ on $E$ such that  $({A}, \, \Delta_A)$ is a  left-symmetric subcoalgebra of $E$. Then there exists a  coalgebraic extending system $\mathcal{C}^{(2)}({A},  V)$ of $({A}, \Delta_A)$ by $V$ such that $(E, \Delta_E)\cong A^{}\# {}^{Q} V$.
\end{lemma}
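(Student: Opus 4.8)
The plan is to dualize the argument of Lemma~\ref{lem:33-3}: there the projection $p\colon E\to A$ was assumed to be a coalgebra map, whereas here the hypothesis is that the inclusion $i\colon A\to E$ is one, i.e.\ that $A$ is a subcoalgebra. First I would fix a linear complement $V$ of $A$ in $E$, with projections $p\colon E\to A$ and $\pi\colon E\to V$ associated to $V=\ker(p)$, so that after identifying $A,V\subseteq E$ we have $\id_E=p+\pi$, and hence $\id_E\otimes\id_E=(p+\pi)\otimes(p+\pi)$ splits every element of $E\otimes E$ into its four components in $A\otimes A$, $A\otimes V$, $V\otimes A$ and $V\otimes V$.

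The decisive observation is that, since $A$ is a left-symmetric subcoalgebra, $\Delta_E(a)=\Delta_A(a)\in A\otimes A$ for every $a\in A$. Applying the projections $(\pi\otimes p)$, $(p\otimes\pi)$ and $(\pi\otimes\pi)$ to $\Delta_E(a)$ therefore gives zero, which says exactly that the three mixing maps $\phi,\psi,P$ of the general type~$(c1)$ datum all vanish. This is precisely what collapses the datum to type~$(c2)$ and explains why only $\rho,\gamma,Q,\Delta_V$ survive.

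For $x\in V$ I would define the four structure maps by projecting the coproduct,
\begin{align*}
\rho(x)&=(p\otimes\pi)\Delta_E(x), & \gamma(x)&=(\pi\otimes p)\Delta_E(x),\\
Q(x)&=(p\otimes p)\Delta_E(x), & \Delta_V(x)&=(\pi\otimes\pi)\Delta_E(x),
\end{align*}
and then decompose $\Delta_E(x)$ into its four components via $\id_E\otimes\id_E$ to obtain $\Delta_E(x)=\Delta_V(x)+\rho(x)+\gamma(x)+Q(x)$. Together with $\Delta_E(a)=\Delta_A(a)$ this shows that the linear isomorphism $\varphi\colon A^{}\# {}^{Q} V\to E$, $\varphi(a,x)=a+x$, intertwines $\Delta_E$ with the candidate coproduct of the type~$(c2)$ construction on $A\oplus V$.

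Finally I would argue that $(\rho,\gamma,Q,\Delta_V)$ is genuinely a coalgebraic extending system, i.e.\ that the compatibility conditions (D1)--(D6) hold. Rather than checking these by hand I would transport the structure: since $(E,\Delta_E)$ is already a left-symmetric coalgebra and $\varphi$ is a linear isomorphism, the candidate coproduct on $A\oplus V$ is left-symmetric as well, and by the type~$(c2)$ characterization this is equivalent to (D1)--(D6). Hence $\varphi$ is an isomorphism of left-symmetric coalgebras and $(E,\Delta_E)\cong A^{}\# {}^{Q} V$. The only genuinely delicate point is the bookkeeping of the subcoalgebra hypothesis: one must verify that $\Delta_E(a)\in A\otimes A$ annihilates all three of $\phi,\psi,P$ simultaneously, since the survival of any of them would force the datum back into the general type~$(c1)$; everything else is the routine projection computation already carried out in Lemma~\ref{lem:33-3}.
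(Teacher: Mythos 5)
Your proposal is correct and takes essentially the same route as the paper: both define $\rho,\gamma,Q,\Delta_V$ by exactly the same projection formulas $(p\otimes\pi)\Delta_E$, $(\pi\otimes p)\Delta_E$, $(p\otimes p)\Delta_E$, $(\pi\otimes\pi)\Delta_E$ on $V=\ker(p)$ and conclude that $\varphi(a,x)=a+x$ is a left-symmetric coalgebra isomorphism onto $(E,\Delta_E)$. The additional details you supply --- that the subcoalgebra hypothesis forces $\phi$, $\psi$, $P$ to vanish, and the transport-of-structure argument showing the induced coproduct on $A\oplus V$ is left-symmetric so that (D1)--(D6) follow from the type $(c2)$ characterization --- are precisely the verification the paper compresses into ``one check''.
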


\begin{proof}
Let $p: E\to {A}$ and $\pi: E\to V$ be the projection map and $V=ker({p})$.
Then the extending datum of $({A},\Delta_A)$ by $V$ is defined as follows:
\begin{eqnarray*}
&&{\rho}: V\rightarrow A\ot V,~~~~{\phi}(x)=(p\otimes {\pi})\Delta_E(x),\\
&&{\gamma}: V\rightarrow V\ot {A},~~~~{\phi}(x)=(\pi\otimes {p})\Delta_E(x),\\
&&\Delta_V: V\rightarrow V\otimes V,~~~~\Delta_V(x)=(\pi\otimes \pi)\Delta_E(x),\\
&&Q: V\rightarrow {A}\otimes {A},~~~~Q(x)=({p}\otimes {p})\Delta_E(x).
\end{eqnarray*}
One check that  $\varphi: A^{}\# {}^{Q} V\to E$ given by $\varphi(a, x)=a+x$ for all $a\in A, x\in V$ is a  left-symmetric  coalgebra isomorphism.
\end{proof}

\begin{lemma}\label{lem-c2}
Let $\mathcal{C}^{(2)}({A}, V)=(\rho, \, \gamma, \, {Q}, \, \Delta_V)$ and ${\mathcal{C}'^{(2)}}({A}, V)=(\rho', \, \gamma', \,  {Q'}, \, \Delta'_V)$ be two  left-symmetric  coalgebraic extending datums of $({A}, \, \Delta_A)$ by $V$. Then there exists a bijection between the set of  left-symmetric   coalgebra homomorphisms $\varphi: A \# {}^{Q} V\rightarrow A \# {}^{Q'} V$ whose restriction on ${A}$ is the identity map and the set of pairs $(r, s)$, where $r: V\rightarrow {A}$ and $s: V\rightarrow V$ are two linear maps satisfying
\begin{eqnarray}
\label{comorph1}&&{\rho}'({s}(x))=r(x\li)\ot s(x\lii)+x\boi\ot s(x\boo),\\
\label{comorph2}&&{\gamma}'({s}(x))=s(x\li)\ot r(x\lii)+s(x\boo)\ot x\bi,\\
\label{comorph3}&&\Delta_V'({s}(x))=({s}\otimes {s})\Delta_V(x)\\
\label{comorph4}&&\Delta'_A({r}(x))+{Q'}({s}(x))=r(x\li)\ot r(x\lii)+x\boi\ot r(x\boo)+r(x\boo)\ot x\bi+{Q}(x).
\end{eqnarray}
Under the above bijection the   left-symmetric coalgebra homomorphism $\varphi=\varphi_{r, s}: A^{ }\# {}^{Q} V\rightarrow A^{ }\# {}^{Q'} V$ to $(r, s)$ is given by $\varphi(a, x)=(a+r(x), s(x))$ for all $a\in {A}$ and $x\in V$. Moreover, $\varphi=\varphi_{r, s}$ is an isomorphism if and only if $s: V\rightarrow V$ is a linear isomorphism.
\end{lemma}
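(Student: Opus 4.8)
The plan is to follow the proof of Lemma~\ref{lem-c1} almost verbatim, the key simplification being that a type $(c2)$ datum has $\phi=\psi=P=0$, so that $\Delta_E$ acts on $A$ by $\Delta_E(a)=\Delta_A(a)$ with both tensor legs already in $A$. First I would record that any linear map $\varphi\colon A\#{}^{Q}V\to A\#{}^{Q'}V$ restricting to the identity on $A$ must have the form $\varphi(a,x)=(a+r(x),s(x))$ for uniquely determined linear maps $r\colon V\to A$ and $s\colon V\to V$: the condition $\varphi(a,0)=(a,0)$ fixes the $A$-part, and decomposing $\varphi(0,x)\in A\oplus V$ into components defines $r$ and $s$. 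The task is then to characterize when $\varphi_{r,s}$ is a morphism of left-symmetric coalgebras, i.e.\ when $\Delta'_E\varphi_{r,s}=(\varphi_{r,s}\ot\varphi_{r,s})\Delta_E$.

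I would verify this identity separately on $A$ and on $V$. On $A$ it holds automatically: since $\Delta_E(a)=\Delta_A(a)=a\li\ot a\lii$ with both legs in $A$ and $\varphi_{r,s}$ fixes $A$, both sides equal $\Delta_A(a)$, using that the two data share the same $\Delta_A$, so $\Delta'_A=\Delta_A$. This is precisely where type $(c2)$ is shorter than type $(c1)$: with $\phi,\psi,P$ absent the $a\in A$ computation produces no conditions, so only four equations survive instead of eight.

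The whole content therefore lies in evaluating on $x\in V$. I would expand the left-hand side as $\Delta'_E\varphi_{r,s}(x)=\Delta'_E(r(x))+\Delta'_E(s(x))=\Delta_A(r(x))+(\Delta'_V+\rho'+\gamma'+Q')(s(x))$, using $r(x)\in A$ and $s(x)\in V$, and the right-hand side by applying $\varphi_{r,s}\ot\varphi_{r,s}$ to each of the four components $\Delta_V(x),\rho(x),\gamma(x),Q(x)$ of $\Delta_E(x)$, remembering that $\varphi_{r,s}$ fixes every $A$-leg and sends a $V$-leg $v$ to $r(v)+s(v)$. The main step---purely bookkeeping---is then to split $E\ot E=(A\ot A)\oplus(A\ot V)\oplus(V\ot A)\oplus(V\ot V)$ and match the two expansions sector by sector. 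The $V\ot V$ sector gives \eqref{comorph3}, the $A\ot V$ sector (with terms $r(x\li)\ot s(x\lii)$ and $x\boi\ot s(x\boo)$) gives \eqref{comorph1}, the $V\ot A$ sector gives \eqref{comorph2}, and the $A\ot A$ sector (collecting $\Delta_A(r(x))+Q'(s(x))$ against the $r\ot r$ term of $\Delta_V$, the $\rho$- and $\gamma$-terms paired with $r$, and the surviving $Q(x)$-term) gives \eqref{comorph4}. Reading these equalities in reverse shows that \eqref{comorph1}--\eqref{comorph4} are also sufficient, which yields the claimed bijection $(r,s)\leftrightarrow\varphi_{r,s}$.

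Finally, for the isomorphism criterion I would note that on $E=A\oplus V$ the map $\varphi_{r,s}$ is block upper-triangular, $\varphi_{r,s}=\left(\begin{smallmatrix}\id_A & r\\ 0 & s\end{smallmatrix}\right)$, hence bijective if and only if $s$ is; equivalently, since $\varphi_{r,s}$ stabilizes $A$ it induces $\id_A$ on $A$ and $s$ on the quotient $E/A\cong V$, so invertibility of $\varphi_{r,s}$ is equivalent to invertibility of $s$. I do not expect any genuine obstacle beyond keeping the four tensor sectors and the Sweedler indices $x\li,x\boi,x\boo,x\bi,x\qi,x\qii$ straight; conceptually the argument is identical to that of Lemma~\ref{lem-c1}, only with fewer terms.
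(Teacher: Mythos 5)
Your proposal is correct and follows essentially the same route as the paper's proof: reduce to the form $\varphi(a,x)=(a+r(x),s(x))$, note the condition on $A$ is automatic, expand $\Delta'_E\varphi(x)$ and $(\varphi\ot\varphi)\Delta_E(x)$ for $x\in V$, and identify the components in the four tensor sectors of $E\ot E$ with \eqref{comorph1}--\eqref{comorph4}. Your explicit block-triangular justification of the isomorphism criterion is a minor elaboration of what the paper states without proof, not a different argument.
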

\begin{proof} The proof is similar as the proof of Lemma \ref{lem-c1}.
Let $\varphi: A^{ }\# {}^{Q} V\rightarrow A^{}\# {}^{Q'} V$  be a  left-symmetric  coalgebra homomorphism  whose restriction on ${A}$ is the identity map.
First   it is easy to see that  $\Delta'_E\varphi(a)=(\varphi\otimes \varphi)\Delta_E(a)$ for all $a\in {A}$.
Then we consider that $\Delta'_E\varphi(x)=(\varphi\otimes \varphi)\Delta_E(x)$ for all $x\in V$.
\begin{eqnarray*}
\Delta'_E\varphi(x)&=&\Delta'_E({r}(x), {s}(x))=\Delta'_E({r}(x))+\Delta'_E({s}(x))\\
&=&\Delta'_A({r}(x))+\Delta'_V({s}(x))+{\rho}'({s}(x))+{\gamma}'({s}(x))+{Q}'({s}(x)),
\end{eqnarray*}
and
\begin{eqnarray*}
&&(\varphi\otimes \varphi)\Delta_E(x)\\
&=&(\varphi\otimes \varphi)(x\li\ot x\lii+x\boi\ot x\boo+x\boo\ot x\bi+{Q}(x))\\
&=&r(x\li)\ot r(x\lii)+r(x\li)\ot s(x\lii)+s(x\li)\ot r(x\lii)+s(x\li)\ot s(x\lii)\\
&&+x\boi\ot r(x\boo)+x\boi\ot s(x\boo)+r(x\boo)\ot x\bi+s(x\boo)\ot x\bi+{Q}(x).
\end{eqnarray*}
Thus we obtain that $\Delta'_E\varphi(x)=(\varphi\otimes \varphi)\Delta_E(x)$ if and only if the conditions \eqref{comorph1}, \eqref{comorph2},  \eqref{comorph3} and \eqref{comorph4} hold. By definition, we obtain that $\varphi=\varphi_{r,s}$ is an isomorphism if and only if $s: V\rightarrow V$ is a linear isomorphism.
\end{proof}

Let $({A},\Delta_A)$ be a   left-symmetric coalgebra and $V$ be a vector space. Two  left-symmetric  coalgebraic extending systems $\mathcal{C}^{(i)}({A}, V)$ and ${\mathcal{C}'^{(i)}}({A}, V)$  are called equivalent if $\varphi_{r, s}$ is an isomorphism.  We denote it by $\mathcal{C}^{(i)}({A}, V)\equiv{\mathcal{C}'^{(i)}}({A}, V)$.
From the above lemmas, we obtain the following result.
\begin{theorem}\label{thm3-2}
Let $({A},\Delta_A)$ be a  left-symmetric  coalgebra and $E$ a vector space containing ${A}$ as a subspace and
$V$ be a ${A}$-complement in $E$. Denote $\mathcal{HC}(V, {A}):=\mathcal{C}^{(1)}({A}, V)\sqcup\mathcal{C}^{(2)}({A}, V) /\equiv$. Then the map
\begin{eqnarray*}
&&\Psi: \mathcal{HC}(V, {A})\rightarrow CExtd(E, {A}),\\
&&\overline{\Omega^{(1)}({A}, V)}\mapsto A^{P}\# {}^{} V,
 \quad \overline{\Omega^{(2)}({A}, V)}\mapsto A^{}\# {}^{Q} V
\end{eqnarray*}
is bijective, where $\overline{\Omega^{(i)}({A}, V)}$ is the equivalence class of $\Omega^{(i)}({A}, V)$ under $\equiv$.
\end{theorem}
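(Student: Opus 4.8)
The plan is to obtain Theorem~\ref{thm3-2} by assembling the four preparatory lemmas of this subsection, in direct analogy with the left-symmetric algebra case treated in Theorem~\ref{thm3-1}. The key observation is that the definition of an extending system of $A$ through $V$ offers exactly two alternatives: either the canonical projection $p\colon E\to A$ is a left-symmetric coalgebra homomorphism, or the canonical injection $i\colon A\to E$ is (equivalently, $A$ is a left-symmetric subcoalgebra of $E$). These two alternatives are precisely what the extending datums of type $(c1)$ and of type $(c2)$ encode, so the disjoint union $\mathcal{C}^{(1)}(A,V)\sqcup\mathcal{C}^{(2)}(A,V)$ is the natural source for $\Psi$.

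First I would prove surjectivity. Starting from an arbitrary left-symmetric coalgebra structure on $E$ that extends $A$, I split into the two cases above. When $p$ is a homomorphism, Lemma~\ref{lem:33-3} yields a type-$(c1)$ datum $\mathcal{C}^{(1)}(A,V)$ together with a coalgebra isomorphism $\varphi(a,x)=a+x$ identifying $(E,\Delta_E)$ with $A^{P}\# {}^{} V$; when $A$ is a subcoalgebra, Lemma~\ref{lem:33-4} yields a type-$(c2)$ datum and the analogous identification with $A^{}\# {}^{Q} V$. In both cases $\varphi$ restricts to the identity on $A$, hence stabilizes $A$, so every class of $CExtd(E,A)$ lies in the image of $\Psi$.

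Next I would establish that $\Psi$ descends to the quotient and is injective; both are immediate consequences of the classification lemmas. By Lemma~\ref{lem-c1} for type $(c1)$ and Lemma~\ref{lem-c2} for type $(c2)$, the left-symmetric coalgebra homomorphisms between two unified coproducts restricting to the identity on $A$ correspond bijectively to pairs $(r,s)$ solving the displayed compatibility equations, and $\varphi_{r,s}$ is an isomorphism exactly when $s\colon V\to V$ is a linear isomorphism. Since two datums are declared $\equiv$-equivalent precisely when such an isomorphism $\varphi_{r,s}$ exists, this simultaneously shows that $\equiv$-equivalent datums map to the same stabilizing-equivalence class in $CExtd(E,A)$ and that distinct classes in $\mathcal{HC}(V,A)$ remain distinct.

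The main obstacle I anticipate is the bookkeeping forced by the disjoint union together with the equivalence $\equiv$: I must ensure the two families do not collapse inconsistently, i.e.\ that a stabilizing isomorphism relates a type-$(c1)$ structure to a type-$(c2)$ structure only in the degenerate situation where $p$ is a homomorphism and $A$ is simultaneously a subcoalgebra. I would control this by reading off $P'$, $\phi'$ and $\psi'$ from \eqref{comorph11}--\eqref{comorph122} and comparing them with the vanishing $\phi=\psi=P=0$ imposed in type $(c2)$, thereby confirming that the partition of $CExtd(E,A)$ induced by $\Psi$ matches the partition of $\mathcal{HC}(V,A)$ into its two types. Once this is in place, combining surjectivity with the bijective morphism description gives the desired bijection $\Psi\colon\mathcal{HC}(V,A)\to CExtd(E,A)$.
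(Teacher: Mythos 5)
Your proposal is correct and follows essentially the same route as the paper: the paper derives Theorem \ref{thm3-2} in one line by assembling exactly the four lemmas you invoke, using Lemmas \ref{lem:33-3} and \ref{lem:33-4} for surjectivity (the case split according to whether $p$ is a coalgebra homomorphism or $A$ is a subcoalgebra) and Lemmas \ref{lem-c1} and \ref{lem-c2} for well-definedness and injectivity via the $(r,s)$-description of stabilizing isomorphisms. Your extra paragraph checking that cross-type equivalences can only occur in the degenerate case $\phi=\psi=P=0$, $Q=0$ is a point the paper passes over silently, and it is a worthwhile addition rather than a deviation.
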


\subsection{Extending structures for    left-symmetric bialgebras}
Let $(A, \, \cdot, \, \Delta_A)$ be a    left-symmetric bialgebra. From $(CBB1)$ and $(CBB2)$  we have the following two cases.

The first case is that we assume $Q=0$ and $\ppr, \ppl$ to be trivial. Then by the above Theorem \ref{main2}, we obtain the following result.

\begin{theorem}\label{thm-41}
Let $(A, \, \cdot, \, \Delta_A)$ be a    left-symmetric bialgebra and $V$  be a vector space.
An extending datum of ${A}$ by $V$ of type $(I)$ is  $\mathcal{IB}^{(1)}({A}, V)=(\trr, \, \trl, \,  \phi, \, \psi,\, \rho,\, \gamma,\, \tht,\,  P, \, \cdot_V, \, \Delta_V)$ consisting of  linear maps
\begin{eqnarray*}
&&\trr: V\otimes {A}\rightarrow V,~~~~\trl:A\otimes V\rightarrow V,~~~~\phi :A \to V\otimes A,\quad{\psi}: V\to  V\otimes A,~~~~\theta:  A\otimes A \rightarrow {V},\\
 &&\rho:V\to A \otimes V,~~~~\gamma :V\to V \otimes A,~~~{P}: A\rightarrow {V}\otimes {V},~~~\cdot_V:V\otimes V \rightarrow V,~~~\Delta_V: V\rightarrow V\otimes V.
\end{eqnarray*}
Then the unified product $A^{P}_{}\# {}^{}_{\theta}\, V$ with product
\begin{align}
(a, x) (b, y):=(ab, xy+ a\trr y+x\trl b+\theta(a, b))
\end{align}
and coproduct
\begin{eqnarray}
\Delta_E(a)=\Delta_A(a)+{\phi}(a)+{\psi}(a)+P(a),\quad \Delta_E(x)=\Delta_V(x)+{\rho}(x)+{\gamma}(x)
\end{eqnarray}
forms a    left-symmetric bialgebra if and only if $A_{}\# {}_{\theta} V$ forms a left-symmetric algebra, $A^{P}\# {}^{} \, V$ forms a  left-symmetric  coalgebra and the following conditions are satisfied:
\begin{enumerate}
\item[(E1)]  $\phi([a, b]) +\gamma(\theta(a, b) -\theta(b, a))$\\
$=a_{(-1)} \otimes\left [a_{(0)}, b]\right)+(a \trr b_{(-1)}) \otimes b_{(0)}+\theta\left(a, b_{1}\right) \otimes b_{2} $\\
$ +b_{(-1)}\ot [a, b_{(0)}]-(b \trr a_{(-1)}) \otimes a_{(0)}-\theta\left(b, a_{1}\right) \otimes a_{2}  $,

\item[(E2)] $\psi([a, b]) +\rho(\theta(a, b)-\theta( b, a))$\\
$=(a b_{(0)}) \otimes b_{(1)}+a_{(0)} \otimes\left(a_{(1)} \trl b\right)
+a_{1} \otimes \theta\left(a_{2}, b\right)+b_{(0)}\ot (a\trr b_{(1)})+b_{1}\ot \tht(a, b_{2})$\\
$-(b a_{(0)}) \otimes a_{(1)}-b_{(0)} \otimes\left(b_{(1)} \trl a\right)
-b_{1} \otimes \theta\left(b_{2}, a\right)-a_{(0)}\ot (b\trr a_{(1)})-a_{1}\ot \tht(b, a_{2})$,

\item[(E3)] $\rho([x, y]) =x_{[-1]} \otimes [x_{[0]}, y]+y_{[-1]}\ot [x, y_{[0]}]$,

\item[(E4)] $\gamma([x, y])  =xy_{[0]}\otimes y_{[1]}  -yx_{[0]}\otimes x_{[1]} $,

\item[(E5)] $\Delta_{V}(a \trr y)-\Delta_{V}(y \trl a)$\\
$=a_{(-1)} \otimes\left(a_{(0)}\trr y\right)+\left(a \trr y_{1}\right) \otimes y_{2}+a\ppi \otimes [a\pii, y]+\theta\left(a, y_{[-1]}\right) \otimes y_{[0]}$\\
$+y\li\ot (a\trr y\lii)+y\poo\ot \tht(a, y\bi)-y_{1} \otimes\left(y_{2}  \trl a\right)-\left(y\trl a_{(0)}\right) \otimes a_{(1)}$\\
$-y_{[0]} \otimes \theta\left(y_{[1]}, a\right)-y a\ppi\otimes a\pii -a_{(-1)}\ot (y\trl a\loo)$,

\item[(E6)]$\Delta_{V}(\theta(a, b))-\theta(b, a))+P([a, b]) $\\
$=a_{(-1)} \otimes\theta(a_{(0)}, b)+a\ppi \otimes (a\pii\trl b)+\theta(a, b_{(0)})\otimes b_{(1)}+(a\trr b\ppi)\otimes b\pii$\\
$+b\loi \ot \tht(a, b\lmoo)+b\ppi\ot (a\trr b\pii)-b_{(-1)} \otimes\theta(b_{(0)}, a)-b\ppi \otimes (b\pii\trl a)$\\
$-\theta(b, a_{(0)})\otimes a_{(1)}-(b\trr a\ppi)\otimes a\pii-a\loi \ot \tht(b, a\lmoo)-a\ppi\ot (b\trr a\pii)$,

\item[(E7)] $\gamma(x\trl b)-\gamma(b\trr x)=x b_{(-1)} \otimes b_{(0)}+x_{[0]} \otimes [x_{[1]}, b]+\left(x\trl b_{1}\right) \otimes b_{2}-\left(b\trr x_{[0]}\right)\otimes x_{[1]} $,

\item[(E8)]
$\rho(x\trl b)-\rho(b\trr x)=x_{[-1]} \otimes\left(x_{[0]} \trl b\right)+b\li\ot (x\trl b\lii)$\\
$+b\loo\ot [x,b\lmi]-b_{1} \otimes\left(b_{2} \trr x\right)-b x_{[-1]} \otimes x_{[0]}-x\boi \ot (b\trr x\poo)$,

\item[(E9)]  $\phi(ab)+\gamma(\theta(a, b))-\tau\psi(a b)-\tau\rho(\theta(a, b))$\\
$=a_{(-1)} \otimes\left(a_{(0)} b\right)+(a \trr b_{(-1)}) \otimes b_{(0)}+\theta\left(a b_{1}\right) \otimes b_{2}+b_{(-1)})\ot ab_{(0)}) $\\
$-\tau\Big((a b_{(0)}) \otimes b_{(1)}+ a_{(0)} \otimes\left(a_{(1)} \trl b\right)+ a_{1} \otimes \theta\left(a_{2}, b\right)+ b_{(0)}\ot (a\trr b_{(1)})+ b_{1}\ot \tht(a, b_{2})\Big)$,

\item[(E10)] $\rho(x y) -\tau\gamma(x y) )=x_{[-1]} \otimes x_{[0]} y +y_{[-1]}\ot xy_{[0]}-y_{[1]}\otimes xy_{[0]}$,

\item[(E11)] $(\id-\tau)\Delta_{V}(a \trr y) $\\
$=(\id-\tau)\Big(a_{(-1)} \otimes\left(a_{(0)}\trr y\right)+\left(a \trr y_{1}\right) \otimes y_{2}+a\ppi \otimes a\pii y$\\
$+\theta\left(a, y_{[-1]}\right) \otimes y_{[0]}+y\li\ot (a\trr y\lii)+y\poo\ot \tht(a,y\bi)$\Big),

\item[(E12)] $(\id-\tau)\Delta_{V}(x \trl b)$\\
$=(\id-\tau)\Big(x_{1} \otimes\left(x_{2}  \trl b\right)+\left(x\trl b_{(0)}\right) \otimes b_{(1)}+x_{[0]} \otimes \theta\left(x_{[1]}, b\right)$\\
$+x b\ppi\otimes b\pii+b\ppi\ot xb\pii+b_{(-1)}\ot (x\trl b\loo)$\Big),

\item[(E13)]$(\id-\tau)(\Delta_{V}(\theta(a,b)) +P(a b) )$\\
$=(\id-\tau)\Big(a_{(-1)} \otimes\theta(a_{(0)}, b)+a\ppi \otimes (a\pii\trl b)+\theta(a, b_{(0)})\otimes b_{(1)}$\\
$+(a\trr b\ppi)\otimes b\pii+b\loi \ot \tht(a, b\lmoo)+b\ppi\ot (a\trr b\pii)$\Big),

\item[(E14)]
 $\gamma(x\trl b) -\tau\rho(x\trl b) $\\
 $=x b_{(-1)} \otimes b_{(0)}+x_{[0]} \otimes x_{[1]} b+\left(x\trl b_{1}\right) \otimes b_{2}-\left(x_{[0]} \trl b\right)\ot x_{[-1]}-(x\trl b\lii)\ot b\li-xb\lmi\ot b\loo$,

\item[(E15)]$ \rho(a \trr y)  -\tau\gamma(a \trr y)$\\
$=a_{(0)} \otimes a_{(1)} y +a_{1} \otimes\left(a_{2} \trr y\right) +a y_{[-1]} \otimes y_{[0]}+y\boi \ot (a\trr y\poo )-\tau\Big( \left(a\trr y_{[0]}\right)\otimes y_{[1]} + y\poo\ot ay\bi\Big)$,
\end{enumerate}
\begin{enumerate}
\item[(E16)] $\Delta_{V}([x, y]) $\\
$=x_{1} \otimes [x_{2},  y]+x y_{1} \otimes y_{2}-y x_{1} \otimes x_{2}- y_{1} \otimes [ y_{2}, x]$\\
$+x_{[0]} \otimes\left(x_{[1]} \trr y\right)+\left(x \trl y_{[-1]}\right) \otimes y_{[0]}+ y_{[0]} \otimes \left(x \trl y_{[1]}\right)$\\
$ -y_{[0]} \otimes\left(y_{[1]} \trr x\right)-\left(y \trl x_{[-1]}\right) \otimes x_{[0]}- x_{[0]} \otimes \left(y \trl x_{[1]}\right)$,
\end{enumerate}
\begin{enumerate}
\item[(E17)] $(\id-\tau)\left(\Delta_{V}(xy) \right)$\\
$=(\id-\tau)\Big(x_{1} \otimes x_{2} y+x y_{1} \otimes y_{2}+ y_{1} \otimes x y_{2}+x_{[0]} \otimes\left(x_{[1]} \trr y\right)+\left(x \trl y_{[-1]}\right) \otimes y_{[0]}+ y_{[0]} \otimes \left(x \trl y_{[1]}\right)\Big)$.
\end{enumerate}

Conversely, any    left-symmetric bialgebra structure on $E$ with the canonical projection map $p: E\to A$ both a left-symmetric algebra homomorphism and a   left-symmetric  coalgebra homomorphism is of this form.
\end{theorem}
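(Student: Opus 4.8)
The plan is to recognise the type-$(I)$ unified product $A^{P}\#_{\theta}V$ as the degenerate case of the cocycle bicrossproduct $A^{P}_{\si}\#^{Q}_{\theta}H$ of Theorem \ref{main2} in which $H=V$, the actions $\ppr,\ppl$ and the cocycle $\si$ are trivial, and the cycle $Q$ vanishes. Under these substitutions the product $(a,x)(b,y)=(ab+x\ppr b+a\ppl y+\si(x,y),\,xy+x\trl b+a\trr y+\theta(a,b))$ collapses to $(ab,\,xy+a\trr y+x\trl b+\theta(a,b))$, and the coproduct $\Delta_E(x)=(\Delta_H+\rho+\gamma+Q)(x)$ collapses to $\Delta_V(x)+\rho(x)+\gamma(x)$, matching exactly the maps in the statement. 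Thus the forward direction reduces to specialising the equivalence underlying Theorem \ref{main2}.

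First I would carry the two bialgebra-compatibility computations from the proof of Theorem \ref{main2} through with $\ppr=\ppl=0$, $\si=0$ and $Q=0$ imposed, and read off which conditions survive. On the structural side, the cocycle cross product system conditions degenerate to (A1)--(A6), i.e. to the assertion that $A\#_{\theta}V$ is a left-symmetric algebra, while the cycle cross coproduct system conditions (CCP1)--(CCP8) degenerate to (C1)--(C10), i.e. to the assertion that $A^{P}\#^{}V$ is a left-symmetric coalgebra. On the compatibility side, the cocycle double matched pair conditions (CDM1)--(CDM20) of Definition \ref{cocycledmp} split into two groups: the five conditions (CDM5), (CDM8), (CDM13), (CDM14) and (CDM18) become vacuous, since after the substitutions every summand in them carries a factor of $\ppr$, $\ppl$, $\si$ or $Q$ and collapses to $0=0$, whereas the remaining fifteen become precisely (E1)--(E15).

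Finally I would treat the cocycle braided conditions of Definition \ref{cocycle-braided}. Conditions (CBB1)--(CBB2) for $A$ reduce, after killing the $\ppr,\ppl$ and $Q$ terms, to the ordinary bialgebra compatibilities \eqref{bialg01}--\eqref{bialg02} for $(A,\cdot,\Delta_A)$, which hold by hypothesis; and (CBB3)--(CBB4) for $V$ reduce, after setting $\si=0$, to (E16)--(E17). Assembling all of this shows that $A^{P}\#_{\theta}V$ is a left-symmetric bialgebra if and only if it is a left-symmetric algebra, a left-symmetric coalgebra, and (E1)--(E17) hold. For the converse I would invoke the two structure lemmas: given any left-symmetric bialgebra structure on $E$ for which $p\colon E\to A$ is simultaneously an algebra and a coalgebra homomorphism, Lemma \ref{lem:33-1} produces an algebraic extending datum of type $(a1)$ (so $\ppr,\ppl,\si$ are automatically absent), while Lemma \ref{lem:33-3} produces a coalgebraic extending datum of type $(c1)$; here the hypothesis that $p$ is a coalgebra map forces $Q(x)=(p\ot p)\Delta_E(x)=\Delta_A(p(x))=0$ for $x\in V$, so the cycle $Q$ indeed vanishes. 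One then checks that these two datums combine into a single type-$(I)$ datum and that $\varphi(a,x)=a+x$ intertwines both products and coproducts, giving $E\cong A^{P}\#_{\theta}V$ as left-symmetric bialgebras.

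The main obstacle will be the bookkeeping in the middle step: verifying term by term that each specialised $(CDM)$ and $(CBB)$ identity matches its claimed $(E)$-counterpart, and in particular confirming that the five conditions (CDM5), (CDM8), (CDM13), (CDM14), (CDM18) genuinely collapse rather than leaving residual constraints. A secondary, more logical, point of care is that Theorem \ref{main2} is phrased with the double matched pair as a standing hypothesis; to extract a true ``if and only if'' here one must use that its proof in fact establishes the bialgebra property as equivalent to the full conjunction of the $(CDM)$ and $(CBB)$ identities, not merely to the $(CBB)$ identities given the $(CDM)$ ones, so that the surviving $(CDM)$ relations can legitimately appear among (E1)--(E15) as necessary and sufficient conditions.
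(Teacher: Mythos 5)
Your proposal is correct and follows essentially the same route as the paper: the paper obtains Theorem \ref{thm-41} precisely by specializing the cocycle bicrossproduct Theorem \ref{main2} to the case $Q=0$ with $\ppr$, $\ppl$ (and hence $\sigma$) trivial, with the converse resting on Lemmas \ref{lem:33-1} and \ref{lem:33-3}. Your term-by-term bookkeeping — that (CDM5), (CDM8), (CDM13), (CDM14), (CDM18) collapse to $0=0$ while the remaining fifteen become (E1)--(E15), that (CBB1)--(CBB2) for $A$ reduce to the assumed compatibilities \eqref{bialg01}--\eqref{bialg02}, that (CBB3)--(CBB4) for $V$ become (E16)--(E17), and that $p$ being a coalgebra map forces $Q=(p\ot p)\Delta_E|_V=0$ — is accurate and in fact supplies details the paper leaves implicit.
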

Note that in this case, $(V, \, \cdot, \, \Delta_V)$ is a  braided    left-symmetric bialgebra. Although $(A, \, \cdot, \, \Delta_A)$ is not a left-symmetric  sub-bialgebra of $E=A^{P}_{}\# {}^{}_{\theta}\, V$, but it is indeed a    left-symmetric bialgebra and a subspace $E$.
Denote the set of all    left-symmetric bialgebraic extending datum of type $(I)$ by $\mathcal{IB}^{(1)}({A},V)$.

The second case is that we assume $P=0, \theta=0$ and $\phi, \psi$ to be trivial. Then by the above Theorem \ref{main2}, we obtain the following result.

\begin{theorem}\label{thm-42}
Let $A$ be a    left-symmetric bialgebra and $V$ be a vector space.
An extending datum of ${A}$ by $V$ of type $(II)$  is   $\mathcal{IB}^{(2)}({A}, \, V)=(\ppr, \,  \ppl, \,  \trr, \,  \trl, \,  \sigma, \,  \rho,  \, \gamma,  \, Q,  \,  \cdot_V,  \, \Delta_V)$ consisting of  linear maps
\begin{eqnarray*}
&&\ppr: V\otimes A \to A,~~~ \ppl: A\otimes V \to A,~~~\trl: V\otimes {A}\rightarrow {V},~~~\trr: A\otimes {V}\rightarrow V,~~~\sigma:  V\otimes V \rightarrow {A},\\
&&{\rho}: V\to  A\otimes V,~~{\gamma}: V\to  V\otimes A,~~{Q}: V\rightarrow {A}\otimes {A},~~\Delta_V: V\rightarrow V\otimes V,~~\cdot_V: V\otimes V \rightarrow V.
\end{eqnarray*}
Then the unified product $A^{}_{\sigma}\# {}^{Q}_{}\, V$ with product
\begin{align}
(a, x)(b, y)=\big(ab+x\ppr b+a\ppl y+\sigma(x, y), \, xy+x\trl b+a\trr y\big).
\end{align}
and coproduct
\begin{eqnarray}
\Delta_E(a)=\Delta_A(a),\quad \Delta_E(x)=\Delta_V(x)+{\rho}(x)+{\gamma}(x)+Q(x)
\end{eqnarray}
forms a    left-symmetric bialgebra if and only if $A_{\sigma}\# {}_{} V$ forms a  left-symmetric algebra, $A^{}\# {}^{Q}V$ forms a left-symmetric  coalgebra and the following conditions are satisfied:

\begin{enumerate}
\item[(F1)] $\rho([x, y]) $\\
$=x_{[-1]} \otimes [x_{[0]}, y] +\left(x \ppr y_{[-1]}\right) \otimes y_{[0]}+x\qi\otimes (x\qii\trr y)+\sigma\left(x, y_{1}\right) \otimes y_{2}+y_{[-1]}\ot [x, y_{[0]}]$\\
$ +y\qi\ot (x\trl y\qii)-\left(y \ppr x_{[-1]}\right) \otimes x_{[0]}-y\qi\otimes y\qii\trr x-\sigma\left(y, x_{1}\right) \otimes x_{2} -x\qi\ot (y\trl x\qii)$,

\item[(F2)] $\gamma([x, y]) $\\
$=x_{[0]}\otimes (x_{[1]}\ppl y)+xy_{[0]}\otimes y_{[1]}+x_{1} \otimes \sigma\left(x_{2}, y\right)+ \left(x\trl y\qi\right)\otimes y\qii$\\
$+y\li\ot \si(x, y\lii)+y\poo\ot (x\ppr y\bi)-y_{[0]}\otimes (y_{[1]}\ppl x)-yx_{[0]}\otimes x_{[1]}$\\
$-y_{1} \otimes \sigma\left(y_{2}, x\right)- \left(y\trl x\qi\right)\otimes x\qii-x\li\ot \si(y, x\lii)-x\poo\ot (y\ppr x\bi)$,

\item[(F3)] $\Delta_{A}(x \ppr b)-\Delta_{A}(b \ppl x)+Q(x\trl  b)-Q(b \trr x)$ \\
$=x_{[-1]} \otimes\left(x_{[0]} \ppr b\right)+\left(x \ppr b_{1}\right) \otimes b_{2}+x\qi \otimes [x\qii, b]  +b\li \ot (x\ppr b\lii) $\\
$-b_{1} \otimes\left(b_{2} \ppl x\right)-\left(b\ppl x_{[0]}\right) \otimes x_{[1]} -b x\qi \otimes x\qii-x\boi \ot (b\ppl x\boo) $,

\item[(F4)] $\Delta_{V}(a \trr y)-\Delta_{V}(y \trl a) = \left(a \trr y_{1}\right) \otimes y_{2}  +y\li\ot (a\trr y\lii)-y_{1} \otimes\left(y_{2}  \trl a\right) $,

\item[(F5)]$\Delta_{A}(\sigma(x, y)-\sigma(y, x))+Q([x, y]) $\\
$=x_{[-1]}\otimes \sigma(x_{[0]}, y)+x\qi\otimes (x\qii\ppl y)+\sigma(x, y_{[0]})\otimes y_{[1]}+(x\ppr y\qi)\otimes y\qii$\\
$+y\poi \ot \si(x, y\poo)+y\qi\ot (x\ppr y\qii)-y_{[-1]}\otimes \sigma(y_{[0]}, x)-y\qi\otimes (y\qii\ppl x)$\\
$-\sigma(y, x_{[0]})\otimes x_{[1]}-(y\ppr x\qi)\otimes x\qii-x\poi \ot \si(y, x\poo)-x\qi\ot (y\ppr x\qii)$,

\item[(F6)]
 $\gamma(x\trl b)-\gamma(b\trr x)$\\
 $=x_{1} \otimes\left(x_{2} \ppr b\right) +x_{[0]} \otimes [x_{[1]}, b] +\left(x\trl b_{1}\right) \otimes b_{2}
-\left(b\trr x_{[0]}\right)\otimes x_{[1]}  -x\li \ot (b\ppl x\lii)$,

\item[(F7)]
$ \rho(x\trl b)-\rho(b\trr x)$\\
$= x_{[-1]} \otimes\left(x_{[0]} \trl b\right) +b\li\ot (x\trl b\lii)
  -\left(b\ppl x_{1}\right) \otimes x_{2}-b_{1} \otimes\left(b_{2} \trr x\right)
 -b x_{[-1]} \otimes x_{[0]}-x\boi \ot (b\trr x\poo )$,

\item[(F8)] $\rho(x y)  -\tau\gamma(x y) ) $\\
$=x_{[-1]} \otimes x_{[0]} y +\left(x \ppr y_{[-1]}\right) \otimes y_{[0]}+x\qi\otimes (x\qii\trr y)+\sigma\left(x, y_{1}\right) \otimes y_{2}$\\
$+y_{[-1]}\ot xy_{[0]}+y\qi\ot (x\trl y\qii)-\tau\Big(x_{[0]}\otimes (x_{[1]}\ppl y)+xy_{[0]}\otimes y_{[1]}$\\
$+x_{1} \otimes \sigma\left(x_{2}, y\right)+ \left(x\trl y\qi\right)\otimes y\qii+y\li\ot \si(x, y\lii)+y\poo\ot (x\ppr y\bi)\Big)$,

\item[(F9)] $(\id-\tau)(\Delta_{A}(x \ppr b) +Q(x\trl  b) )$ \\
$=(\id-\tau)\Big(x_{[-1]} \otimes\left(x_{[0]} \ppr b\right)+\left(x \ppr b_{1}\right) \otimes b_{2}+x\qi \otimes x\qii b+b\li \ot (x\ppr b\lii) $\Big),

\item[(F10)] $(\id-\tau)(\Delta_{A}(a\ppl y) +Q(a\trr y) )$\\
$=(\id-\tau)\Big(a_{1} \otimes\left(a_{2} \ppl y\right)+\left(a\ppl y_{[0]}\right) \otimes y_{[1]}
 +a y\qi \otimes y\qii+y\boi \ot (a\ppl y\boo)+y\qi\ot ay\qii$\Big),

\item[(F11)] $(\id-\tau) \Delta_{H}(a \trr y)
 =(\id-\tau)\Big( \left(a \trr y_{1}\right) \otimes y_{2}
 +y\li\ot (a\trr y\lii )$\Big),

\item[(F12)] $(\id-\tau)\Delta_{H}(x \trl b)
=x_{1} \otimes\left(x_{2}  \trl b\right)-\left(x_{2}  \trl b\right)\ot x_{1}$,

\item[(F13)]$(\id-\tau)(\Delta_{A}(\sigma(x,y)) +Q(x y) )$\\
$=(\id-\tau)\Big(x_{[-1]}\otimes \sigma(x_{[0]},y)+x\qi\otimes (x\qii\ppl y)+\sigma(x,y_{[0]})\otimes y_{[-1]}$\\
$+(x\ppr y\qi)\otimes y\qii+y\poi \ot \si(x,y\poo)+y\qi\ot (x\ppr y\qii)$\Big),

\item[(F14)]
 $\gamma(x\trl b)-\tau\rho(x\trl b) $\\
 $=x_{1} \otimes\left(x_{2} \ppr b\right)+x_{[0]} \otimes x_{[1]} b+\left(x\trl b_{1}\right) \otimes b_{2}
-\tau\Big(x_{[-1]} \otimes\left(x_{[0]} \trl b\right)+b\li\ot (x\trl b\lii)\Big)$,

\item[(F15)]$\rho(a \trr y)-\tau\gamma(a \trr y)$\\
$=\left(a\ppl y_{1}\right) \otimes y_{2}+a_{1} \otimes\left(a_{2} \trr y\right)+a y_{[-1]} \otimes y_{[0]}+y\boi \ot (a\trr y\poo)$\\
$-\tau\Big(\left(a\trr y_{[0]}\right)\otimes y_{[1]}+y\li \ot (a\ppl y\lii)+ y\poo\ot ay\bi\Big)$.
\end{enumerate}
\begin{enumerate}
\item[(F16)] $\Delta_{V}([x, y]) $\\
$=x_{1} \otimes [x_{2}, y]+x y_{1} \otimes y_{2}-y x_{1} \otimes x_{2}- y_{1} \otimes [ y_{2}, x]$\\
$+x_{[0]} \otimes\left(x_{[1]} \trr y\right)+\left(x \trl y_{[-1]}\right) \otimes y_{[0]}+ y_{[0]} \otimes \left(x \trl y_{[1]}\right)$\\
$ -y_{[0]} \otimes\left(y_{[1]} \trr x\right)-\left(y \trl x_{[-1]}\right) \otimes x_{[0]}- x_{[0]} \otimes \left(y \trl x_{[1]}\right)$,
\end{enumerate}
\begin{enumerate}
\item[(F17)] $(\id-\tau)\left(\Delta_{V}(xy) \right)$\\
$=(\id-\tau)\Big(x_{1} \otimes x_{2} y+x y_{1} \otimes y_{2}+ y_{1} \otimes x y_{2}+x_{[0]} \otimes\left(x_{[1]} \trr y\right)+\left(x \trl y_{[-1]}\right) \otimes y_{[0]}+ y_{[0]} \otimes \left(x \trl y_{[1]}\right)\Big)$.
\end{enumerate}

Conversely, any    left-symmetric bialgebra structure on $E$ with the canonical injection map $i: A\to E$ both a  left-symmetric  algebra homomorphism and a  left-symmetric   coalgebra homomorphism is of this form.
\end{theorem}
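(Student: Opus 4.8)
The plan is to obtain this theorem as a specialization of the cocycle bicrossproduct construction of Theorem \ref{main2}, rather than redoing the two long compatibility computations from scratch. The key observation is that the type $(II)$ unified product $A^{}_{\sigma}\# {}^{Q}_{}\, V$ is exactly the cocycle bicrossproduct $A^{P}_{\sigma}\# {}^{Q}_{\theta}H$ of Theorem \ref{main2} in which one takes $H=V$ and sets the four maps $\theta$, $P$, $\phi$, $\psi$ equal to zero. Under this specialization the product and coproduct formulas reduce precisely to those in the statement; conceptually, $\theta=P=0$ forces $ab$ and $\Delta_A(a)$ to stay inside $A$, while $\phi=\psi=0$ says that $A$ carries no genuine $V$-coaction, which is exactly what it means for $A$ to be a sub-bialgebra of $E$.

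First I would invoke Theorem \ref{main2}: the bicrossproduct is an ordinary left-symmetric bialgebra if and only if $(A,H,\sigma,\theta,P,Q)$ is a cocycle double matched pair, $(A,H,\sigma,\theta)$ is a cocycle cross product system, $(A,H,P,Q)$ is a cycle cross coproduct system, and both $A$ and $H$ are cocycle braided left-symmetric bialgebras. I would then substitute $\theta=P=\phi=\psi=0$ into each of these families. With $\theta=0$ the cocycle cross product system conditions collapse to the assertion that $A_{\sigma}\#_{}V$ is a left-symmetric algebra, and with $P=\phi=\psi=0$ the cycle cross coproduct system conditions collapse to the assertion that $A^{}\#{}^{Q}V$ is a left-symmetric coalgebra; these yield the first two clauses of the statement.

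Next I would handle the braided and double matched pair conditions. In conditions $(CBB1)$ and $(CBB2)$ of Definition \ref{cocycle-braided}(i) every summand involving $a\ppi$, a factor $a_{(-1)}$ or $a_{(1)}$, or the term $Q\theta$ drops out, so these reduce to the ordinary relations \eqref{eq:LB0} and \eqref{eq:LB1}, which hold because $A$ is assumed to be a left-symmetric bialgebra; thus the braided hypotheses on $A$ are automatic. The braided conditions $(CBB3)$, $(CBB4)$ on $H=V$ lose their $P\sigma$ terms and become exactly $(F16)$ and $(F17)$. For the twenty conditions $(CDM1)$--$(CDM20)$ of Definition \ref{cocycledmp} I would verify that the five identities $(CDM1)$, $(CDM2)$, $(CDM7)$, $(CDM11)$, $(CDM17)$---each of which equates expressions built solely from $\phi$, $\psi$, $\theta$, $P$---degenerate to $0=0$, while the remaining fifteen reduce term-by-term to $(F1)$--$(F15)$. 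This last matching is the main technical obstacle: one must track which summands survive the vanishing of $\theta,P,\phi,\psi$ and confirm that the survivors reproduce the stated relations verbatim, with particular care for the placement of the $\rho,\gamma,Q,\sigma$ terms and the flip $\tau$.

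Finally, for the converse I would combine the reconstruction arguments already established for algebras and coalgebras. If $E$ is a left-symmetric bialgebra and $i\colon A\to E$ is both an algebra and a coalgebra homomorphism, then $A$ is simultaneously a left-symmetric subalgebra and subcoalgebra of $E$. Fixing a projection $p\colon E\to A$ and $\pi\colon E\to V$ with $V=\ker p$, I would define $\ppr,\ppl,\trr,\trl,\sigma,\rho,\gamma,Q,\cdot_V,\Delta_V$ by the formulas of the type $(a2)$ and type $(c2)$ lemmas (for the coproduct part, Lemma \ref{lem:33-4}). Since $A$ is a subalgebra, the $V$-component $\theta(a,b)=\pi(a\cdot_E b)$ vanishes, and since $A$ is a subcoalgebra, $\Delta_E(a)\in A\otimes A$ forces $\phi=\psi=P=0$; hence the datum is of type $(II)$ and $\varphi(a,x)=a+x$ is the required isomorphism $E\cong A^{}_{\sigma}\#{}^{Q}V$. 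Running the first part in reverse then shows $(V,\cdot_V,\Delta_V)$ is a cocycle braided left-symmetric bialgebra and that $(F1)$--$(F17)$ hold.
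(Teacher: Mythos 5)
Your proposal is correct and takes essentially the same route as the paper: the authors obtain Theorem \ref{thm-42} precisely by specializing Theorem \ref{main2} to the case $\theta=0$, $P=0$, $\phi=\psi=0$, which is exactly your reduction. Your extra bookkeeping---checking that (CDM1), (CDM2), (CDM7), (CDM11), (CDM17) degenerate to $0=0$ while the remaining fifteen conditions become (F1)--(F15), that (CBB1)--(CBB2) on $A$ reduce to the given bialgebra axioms and (CBB3)--(CBB4) on $V$ become (F16)--(F17), and the converse via the reconstruction lemmas---simply makes explicit what the paper leaves to the reader.
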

Note that in this case, $(A, \, \cdot, \, \Delta_A)$ is a  left-symmetric  sub-bialgebra of $E=A^{}_{\sigma}\# {}^{Q}_{}\, V$ and $(V, \, \cdot, \, \Delta_V)$ is a  braided   left-symmetric bialgebra.
Denote the set of all     left-symmetric bialgebraic extending datum of type $(II)$ by $\mathcal{IB}^{(2)}({A}, V)$.

In the above two cases, we find that  the braided    left-symmetric bialgebra $V$ play a special role in the extending problem of   left-symmetric bialgebra $A$.
Note that $A^{P}_{}\# {}^{}_{\theta}\, V$ and $A^{}_{\sigma}\# {}^{Q}_{}\, V$ are all    left-symmetric bialgebra structures on $E$.
Conversely,  any    left-symmetric bialgebra extending system $E$ of ${A}$  through $V$ is isomorphic to such two types.
Now from Theorem \ref{thm-41}, Theorem \ref{thm-42} we obtain the main result of in this section, which solve the extending problem for    left-symmetric bialgebra.

\begin{theorem}\label{bim1}
Let $({A}, \, \cdot, \, \Delta_A)$ be a left-symmetric bialgebra and $E$ be a vector space containing ${A}$ as a subspace and $V$ be a complement of ${A}$ in $E$.
Denote by
$$\mathcal{HLB}(V, {A}):=\mathcal{IB}^{(1)}({A},V)\sqcup\mathcal{IB}^{(2)}({A},V)/\equiv.$$
Then the map
\begin{eqnarray}
&&\Upsilon: \mathcal{HLB}(V,{A})\rightarrow BExtd(E,{A}),\\
&&\overline{\mathcal{IB}^{(1)}({A},V)}\mapsto A^{P}_{}\# {}^{}_{\theta}\, V,\quad   \overline{\mathcal{IB}^{(2)}({A},V)}\mapsto A^{}_{\sigma}\# {}^{Q}_{}\, V
\end{eqnarray}
is bijective, where $\overline{\mathcal{IB}^{(i)}({A}, V)}$ is the equivalence class of $\mathcal{IB}^{(i)}({A}, V)$ under $\equiv$.
\end{theorem}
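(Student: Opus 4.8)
The plan is to assemble Theorem \ref{bim1} from the two structural Theorems \ref{thm-41} and \ref{thm-42}, together with the separate algebra and coalgebra classifications (Theorems \ref{thm3-1} and \ref{thm3-2}), following exactly the template established there. First I would prove surjectivity of $\Upsilon$, i.e.\ that every left-symmetric bialgebra structure on $E$ containing $A$ is equivalent, via an isomorphism stabilizing $A$, to a unified product of type $(I)$ or of type $(II)$. By the definition of an extending system, such a structure makes either the canonical projection $p\colon E\to A$ or the canonical injection $i\colon A\to E$ a left-symmetric bialgebra homomorphism. If $p$ is a homomorphism, then $p$ is simultaneously a left-symmetric algebra and a left-symmetric coalgebra homomorphism, so Lemma \ref{lem:33-1} produces an algebraic extending datum of type $(a1)$ and Lemma \ref{lem:33-3} a coalgebraic datum of type $(c1)$; the combined datum is precisely of type $(I)$, and Theorem \ref{thm-41} yields $E\cong A^{P}_{}\# {}^{}_{\theta}\, V$. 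Dually, if $i$ is a homomorphism then $A$ is a left-symmetric sub-bialgebra, the algebraic datum is of type $(a2)$ and the coalgebraic datum of type $(c2)$, so $E\cong A^{}_{\sigma}\# {}^{Q}_{}\, V$ of type $(II)$ by Theorem \ref{thm-42}.

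Second, I would record the morphism description underlying the equivalence relation $\equiv$. Since a linear map is a homomorphism of left-symmetric bialgebras precisely when it is simultaneously an algebra and a coalgebra homomorphism, any $A$-stabilizing map $\varphi\colon E\to E'$ is determined by a pair $(r,s)$ with $r\colon V\to A$ and $s\colon V\to V$, and $\varphi=\varphi_{r,s}$ is a bialgebra homomorphism if and only if it satisfies simultaneously the algebra morphism conditions and the coalgebra morphism conditions (Lemma \ref{lem-c1} for type $(I)$, Lemma \ref{lem-c2} for type $(II)$). Moreover $\varphi_{r,s}$ is an isomorphism exactly when $s$ is a linear isomorphism. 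Thus $\equiv$, restricted to each component of the disjoint union, is the intersection of the algebra and coalgebra equivalence relations already analysed in Theorems \ref{thm3-1} and \ref{thm3-2}.

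Third, I would conclude that $\Upsilon$ is well defined and injective. Well-definedness holds because equivalent datums yield, by construction, $A$-stabilizing bialgebra isomorphisms between the associated unified products, hence the same class in $BExtd(E,A)$. For injectivity, if two type-$(I)$ products $A^{P}_{}\# {}^{}_{\theta}\, V$ and $A^{P'}_{}\# {}^{}_{\theta'}\, V$ (respectively two type-$(II)$ products) determine the same class in $BExtd(E,A)$, then the witnessing $A$-stabilizing bialgebra isomorphism is necessarily of the form $\varphi_{r,s}$ with $s$ invertible, and the morphism conditions from the previous paragraph say exactly that the two datums are $\equiv$-equivalent. Combined with surjectivity from the first step, this gives the asserted bijection $\mathcal{HLB}(V,A)\to BExtd(E,A)$.

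The main obstacle I anticipate is not the algebraic verifications, which are routine once Theorems \ref{thm-41}, \ref{thm-42} and the morphism lemmas are in hand, but the careful bookkeeping of the disjoint union $\mathcal{IB}^{(1)}(A,V)\sqcup\mathcal{IB}^{(2)}(A,V)$ under $\equiv$. One must confirm that the projection-homomorphism case (type $(I)$) and the injection-homomorphism case (type $(II)$) together exhaust $BExtd(E,A)$ and that $\Upsilon$ is injective \emph{across} the two types, so that no structure on $E$ is represented in both components without being identified by the equivalence. The delicate point is precisely matching a structure in which $A$ is merely a subspace with one in which $A$ is a genuine sub-bialgebra, and verifying that these situations are reconciled by the $\equiv$-relation rather than leading to double counting.
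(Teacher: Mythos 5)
Your proposal is correct and takes essentially the same route as the paper: the paper offers no separate argument for Theorem \ref{bim1} beyond invoking Theorems \ref{thm-41} and \ref{thm-42} (whose converse clauses already say that any structure on $E$ making $p$, respectively $i$, a bialgebra homomorphism is a unified product of type $(I)$, respectively type $(II)$), with the equivalence relation $\equiv$ handled through the $(r,s)$-morphism lemmas exactly as you describe. You are in fact more careful than the paper itself --- in particular, the cross-type double-counting issue you flag (a structure for which both $p$ and $i$ are bialgebra homomorphisms lies in the image of both components) is a genuine subtlety that the paper passes over in silence.
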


A very special case is that when $\ppr$ and $\ppl$ are trivial in the above Theorem \ref{thm-42}.    We obtain the following result.
\begin{corollary}\label{thm4}
Let $A$ be a    left-symmetric bialgebra and $V$ be a vector space.
An extending datum of ${A}$ by $V$ is  $\mathcal{IB}^{(3)}({A}, V)=(\trr, \, \trl, \, \sigma, \, \rho, \, \gamma,\,  Q, \, \cdot, \, \Delta_V)$ consisting of  eight linear maps
\begin{eqnarray*}
\trl: V\otimes {A}\rightarrow {V},~~~~\trr: A\otimes {V}\rightarrow V,~~~~\sigma:  V\otimes V \rightarrow {A},~~~\cdot_V:V\otimes V \rightarrow V,\\
{\rho}: V\to  A\otimes V,~~~~{\gamma}: V\to  V\otimes A,~~~~{Q}: V\rightarrow {A}\otimes {A},~~~~\Delta_V: V\rightarrow V\otimes V.
\end{eqnarray*}
Then the unified product $A^{}_{\sigma}\# {}^{Q}_{}\, V$ with product
\begin{align}
(a, x) (b, y):=(ab+\sigma(x, y), \,  xy+x\trl b+a\trr y)
\end{align}
and coproduct
\begin{eqnarray}
\Delta_E(a)=\Delta_A(a),\quad \Delta_E(x)=\Delta_V(x)+{\rho}(x)+{\gamma}(x)+Q(x)
\end{eqnarray}
forms a    left-symmetric bialgebra if and only if $A_{\sigma}\# {}_{} V$ forms a  left-symmetric  algebra, $A^{}\# {}^{Q} \, V$ forms a  left-symmetric  coalgebra and the following conditions are satisfied:
\begin{enumerate}
\item[(G1)] $\rho([x, y])  $\\
$=x_{[-1]} \otimes [x_{[0]}, y]  +x\qi\otimes (x\qii\trr y)
 +\sigma\left(x, y_{1}\right) \otimes y_{2}+y_{[-1]}\ot [x, y_{[0]}]$\\
$ +y\qi\ot (x\trl y\qii)-y\qi\otimes (y\qii\trr x)-\sigma\left(y, x_{1}\right) \otimes x_{2}-x\qi\ot (y\trl x\qii)$,

\item[(G2)] $\gamma([x, y])  $\\
$= xy_{[0]}\otimes y_{[1]}+x_{1} \otimes \sigma\left(x_{2}, y\right)
 + \left(x\trl y\qi\right)\otimes y\qii+y\li\ot \si(x, y\lii) $\\
$ -yx_{[0]}\otimes x_{[1]}-y_{1} \otimes \sigma\left(y_{2}, x\right)
 - \left(y\trl x\qi\right)\otimes x\qii-x\li\ot \si(y, x\lii) $,

\item[(G3)] $ Q(x\trl  b)-Q(b \trr x) = x\qi \otimes x\qii b -b x\qi \otimes x\qii -x\qi\ot bx\qii$,

\item[(G4)] $\Delta_{V}(a \trr y)-\Delta_{V}(y \trl a) = \left(a \trr y_{1}\right) \otimes y_{2}  +y\li\ot (a\trr y\lii)-y_{1} \otimes\left(y_{2}  \trl a\right) $,

\item[(G5)]$\Delta_{A}(\sigma(x, y)-\sigma(y, x))+Q([x, y])$\\
$=x_{[-1]}\otimes \sigma(x_{[0]}, y) +\sigma(x,y_{[0]})\otimes y_{[-1]} +y\poi \ot \si(x, y\poo) $\\
$-y_{[-1]}\otimes \sigma(y_{[0]}, x) -\sigma(y,x_{[0]})\otimes x_{[-1]}-x\poi \ot \si(y, x\poo) $,

\item[(G6)]
 $\gamma(x\trl b)-\gamma(b\trr x)= x_{[0]} \otimes [x_{[1]}, b] +\left(x\trl b_{1}\right) \otimes b_{2}-\left(b\trr x_{[0]}\right)\otimes x_{[1]}  $,

\item[(G7)]
$ \rho(x\trl b)-\rho(b\trr x)$\\
$= x_{[-1]} \otimes\left(x_{[0]} \trl b\right) +b\li\ot (x\trl b\lii)
  -b_{1} \otimes\left(b_{2} \trr x\right)
 -b x_{[-1]} \otimes x_{[0]}-x\boi \ot (b\trr x\poo) $,

\item[(G8)] $\rho(x y)  -\tau\gamma(x y) ) $\\
$=x_{[-1]} \otimes x_{[0]} y +x\qi\otimes (x\qii\trr y)
 +\sigma\left(x, y_{1}\right) \otimes y_{2}+y_{[-1]}\ot xy_{[0]}+y\qi\ot (x\trl y\qii)$\\
$-\tau\Big( xy_{[0]}\otimes y_{[1]}+x_{1} \otimes \sigma\left(x_{2}, y\right)
 + \left(x\trl y\qi\right)\otimes y\qii+y\li\ot \si(x,y\lii) \Big)$,

\item[(G9)] $(\id-\tau) Q(x\trl  b)=x\qi \otimes x\qii b  -x\qii b\ot x\qi$ ,

\item[(G10)] $(\id-\tau)Q(a\trr y)
=(\id-\tau)\Big(a y\qi \otimes y\qii+y\qi\ot ay\qii$\Big),

\item[(G11)] $(\id-\tau) \Delta_{V}(a \trr y)
 =(\id-\tau)\Big( \left(a \trr y_{1}\right) \otimes y_{2}
 +y\li\ot a\trr y\lii $\Big),

\item[(G12)] $(\id-\tau)\Delta_{V}(x \trl b)
=(\id-\tau)\Big(x_{1} \otimes\left(x_{2}  \trl b\right)$\Big),

\item[(G13)]$(\id-\tau)(\Delta_{A}(\sigma(x,y)) +Q(x y) )$\\
$=(\id-\tau)\Big(x_{[-1]}\otimes \sigma(x_{[0]},y) +\sigma(x,y_{[0]})\otimes y_{[-1]}  +y\poi \ot \si(x,y\poo) $\Big),

\item[(G14)]
 $\gamma(x\trl b)-\tau\rho(x\trl b)
  = x_{[0]} \otimes x_{[1]} b+\left(x\trl b_{1}\right) \otimes b_{2}
-\tau\Big(x_{[-1]} \otimes\left(x_{[0]} \trl b\right)+b\li\ot (x\trl b\lii)\Big)$,

\item[(G15)]$\rho(a \trr y)-\tau\gamma(a \trr y)$\\
$= a_{1} \otimes\left(a_{2} \trr y\right)+a y_{[-1]} \otimes y_{[0]}+y\boi \ot (a\trr y\poo ) -\tau\Big(\left(a\trr y_{[0]}\right)\otimes y_{[1]} + y\poo\ot ay\bi\Big)$,
\end{enumerate}
\begin{enumerate}
\item[(G16)] $\Delta_{V}([x, y]) $\\
$=x_{1} \otimes [x_{2}, y]+x y_{1} \otimes y_{2}-y x_{1} \otimes x_{2}- y_{1} \otimes [ y_{2}, x]$\\
$+x_{[0]} \otimes\left(x_{[1]} \trr y\right)+\left(x \trl y_{[-1]}\right) \otimes y_{[0]}+ y_{[0]} \otimes \left(x \trl y_{[1]}\right)$\\
$ -y_{[0]} \otimes\left(y_{[1]} \trr x\right)-\left(y \trl x_{[-1]}\right) \otimes x_{[0]}- x_{[0]} \otimes \left(y \trl x_{[1]}\right)$,
\end{enumerate}
\begin{enumerate}
\item[(G17)] $(\id-\tau) \Delta_{H}(xy)  $\\
$=(\id-\tau)\Big(x_{1} \otimes x_{2} y+x y_{1} \otimes y_{2}+ y_{1} \otimes x y_{2}
+x_{[0]} \otimes\left(x_{[1]} \trr y\right)+\left(x \trl y_{[-1]}\right) \otimes y_{[0]}+ y_{[0]} \otimes \left(x \trl y_{[1]}\right)\Big)$.
\end{enumerate}
\end{corollary}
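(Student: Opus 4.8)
The plan is to obtain this corollary as the special case of Theorem~\ref{thm-42} in which the two actions $\ppr\colon V\otimes A\to A$ and $\ppl\colon A\otimes V\to A$ are identically zero. First I would note that setting $\ppr=\ppl=0$ in the product of Theorem~\ref{thm-42},
\[
(a,x)(b,y)=(ab+x\ppr b+a\ppl y+\sigma(x,y),\ xy+x\trl b+a\trr y),
\]
immediately yields $(a,x)(b,y)=(ab+\sigma(x,y),\ xy+x\trl b+a\trr y)$, which is exactly the product asserted here, while the coproduct $\Delta_E$ is literally the same in both statements. Hence the object $A^{}_{\sigma}\# {}^{Q}_{}\, V$ of the corollary is precisely the one of Theorem~\ref{thm-42} with these two actions switched off, and the whole task reduces to checking that the seventeen bialgebra compatibility conditions (F1)--(F17) degenerate exactly to (G1)--(G17).

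Next I would run down the list (F1)--(F17) and in each one delete every summand in which $\ppr$ or $\ppl$ occurs, since such a term vanishes once both actions are zero. The conditions that already contain no occurrence of $\ppr$ or $\ppl$, namely (F4), (F11), (F12), (F16) and (F17), survive unchanged and reproduce (G4), (G11), (G12), (G16) and (G17) verbatim. For the remaining conditions the only further manipulation is to expand those internal left-symmetric brackets that sit inside a surviving $Q$-leg: for instance in (F3) the summand $x\qi\otimes[x\qii,b]$ must be rewritten as $x\qi\otimes x\qii b-x\qi\otimes b\,x\qii$ using the commutator $[u,v]=uv-vu$ in $A$, after which (F3) with $\ppr=\ppl=0$ becomes exactly (G3). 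The same routine turns (F1)--(F3), (F5)--(F10) and (F13)--(F15) into (G1)--(G3), (G5)--(G10) and (G13)--(G15).

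Since the specialization $\ppr=\ppl=0$ can neither create nor destroy any of the defining identities, each (G$i$) is equivalent to the correspondingly specialized (F$i$), and so both implications of the stated ``if and only if'' follow at once from Theorem~\ref{thm-42}. The step I expect to be most error-prone is precisely this bookkeeping: when deleting the $\ppr$- and $\ppl$-terms one must keep careful track of which tensor leg each surviving $\sigma$- or $Q$-contribution occupies, for instance distinguishing $\sigma(x,y\li)\otimes y\lii$ from $y\li\otimes\sigma(x,y\lii)$ and $x\qi\otimes x\qii b$ from $x\qii b\otimes x\qi$, since a single transposed leg would spoil the match between an (F)-identity and its (G)-counterpart. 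Beyond this careful term-by-term comparison no conceptual difficulty arises.
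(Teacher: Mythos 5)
Your proposal is correct and is exactly the paper's own route: the paper obtains this corollary precisely by taking $\ppr$ and $\ppl$ trivial in Theorem~\ref{thm-42}, so that the product and coproduct specialize as you describe and conditions (F1)--(F17) collapse termwise (after expanding commutators such as $x\qi\otimes[x\qii,b]$) to (G1)--(G17). Your bookkeeping of which conditions survive unchanged and which require deletion of $\ppr$/$\ppl$ terms matches the paper's specialization.
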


\section*{Acknowledgements}
This is a primary edition, something should be modified in the future.

\vskip7pt
\footnotesize{
\noindent Tao Zhang\\
College of Mathematics and Information Science,\\
Henan Normal University, Xinxiang 453007, P. R. China;\\
 E-mail address: \texttt{{zhangtao@htu.edu.cn}}

\vskip7pt
\footnotesize{
\noindent Hui-jun Yao\\
College of Mathematics and Information Science,\\
Henan Normal University, Xinxiang 453007, P. R. China};\\
 E-mail address: \texttt{{yhjdyxa@126.com}}


\begin{thebibliography}{99}

\bibitem{AM1}
A. L. Agore, G. Militaru,  \emph{Extending structures I: the level of groups}, {Algebr. Represent. Theory} { 17} (2014), 831--848.

\bibitem{AM2}
A. L. Agore, G. Militaru, \emph{Extending structures II: the quantum version},  J. Algebra { 336} (2011), 321--341.


\bibitem{AM3}
A.L. Agore, G. Militaru, \emph{Extending structures for Lie algebras}, Monatsh. fur Mathematik 174 (2014), 169--193.

%
%
\bibitem{AM5}
A.L. Agore, G. Militaru, \emph{The global extension problem, crossed products and co-flag noncommutative Poisson algebras}, J. Algebra 426 (2015), 1--31.

\bibitem{AM6}
A. L. Agore, G. Militaru, \emph{Extending structures, Galois groups and supersolvable associative algebras}, Monatsh. Math. {181} (2016), 1--33.


\bibitem{A1}
M. Aguiar, \emph{On the associative analog of Lie bialgebras}, J. Algebra 244 (2001), 492--532.

\bibitem{A2}
M. Aguiar, \emph{Infinitesimal Hopf algebras}, New trends in Hopf algebra theory (La Falda, 1999), 1--29,
Contemp. Math.  267, Amer. Math. Soc.  Providence, RI, 2000.


\bibitem{Bai08}
C. M. Bai, \emph{Left-symmetric bialgebras and an analogue of the classical Yang-Baxter equation}, Commun. Contemp. Math. 10(2) (2008), 221--260.

\bibitem{Bai10}
C. M. Bai,  \emph{Double constructions of Frobenius algebras, Connes cocycles and their duality}, J. Noncommut. Geom. 4 (2010), 475--530.


\bibitem{BD99}Y.~Bespalov and B.~Drabant,  \emph{Cross product bialgebras - Part I},  J. Algebra { 219} (1999), 466--505.

\bibitem{BD01}Y.~Bespalov and B.~Drabant,  \emph{Cross product bialgebras - Part II}, J. Algebra { 240} (2001), 445--504.


\bibitem{Dr86} V. G.~Drinfel'd,    \emph{Quantum groups}. In ``Proceedings International Congress of Mathematicians, August 3-11, 1986, Berkeley, CA" pp. 798--820, Amer. Math. Soc.  Providence, RI, 1987.



%

\bibitem{Hong1}
Y. Hong, \emph{Extending structures and classifying complements for left-symmetric algebras}, Results Math. 74 (2019), 32.

\bibitem{Hong2}
Y. Hong,  \emph{Extending structures for Lie bialgebras}, arXiv:2108.05586.


\bibitem{JR79}
S. A. Joni and G.-C. Rota,  \emph{Coalgebras and bialgebras in combinatorics},  Stud. Appl. Math. { 61} (1979), 93--139.

\bibitem{Ki01}
 H. Kim, \emph{Complete left-invariant affine structures on nilpotent Lie groups}, J. Diff. Geom. 24 (1986) 373-394.


\bibitem{LR}
J.-L. Loday and M. O. Ronco, \emph{On the structure of cofree Hopf algebras}, J. Reine Angew. Math. { 592} (2006), 123--155.

\bibitem{Ma90a}
S.~Majid,   \emph{Matched pairs of   groups associated to solutions of the Yang-Baxter equations}, Pacific J. Math.  { 141} (1990), 311--332.



\bibitem{Ma95} S.~Majid,    \emph{Founditions of Quantum Groups}, Cambridge: Cambridge University  Press, 1995.

\bibitem{Ma00} S.~Majid,   \emph{Braided Lie bialgebras}, Pacific J. Math.  { 192} (2000), 329--356.

\bibitem{Mak001}
A. Makhlouf and S. Silvestrov, \emph{Hom-algebra structures},  J. Gen. Lie Theory Appl. 2 (2) (2008), 51--64.

\bibitem{Mas00} A.~Masuoka,   \emph{Extensions of Hopf algebras and  Lie bialgebras}, Trans. Amer. Math. Soc.   { 352} (2000), 3837--3879.


\bibitem{P01}
 A.M. Perea, \emph{Flat left-invariant connections adapted to the automorphism structure of a Lie group}, J. Diff. Geom. 16 (1981) 445-474.

\bibitem{Ra85} D.~E.~Radford,  \emph{The structure of Hopf algebras with a projection},  J. Algebra  { 92} (1985), 322--347.

\bibitem{sh001}
Y. Sheng and C. Bai, A new approach to hom-Lie bialgebras. J. Algebra 399 (2014), 232--250.

\bibitem{sh002}
Y. Sheng and D. Chen, Hom-Lie 2-algebras. J. Algebra 376 (2013), 174--195.


\bibitem{So96}  Y.~Sommerh\"{a}user,  \emph{Kac-Moody algebras}, Presented at the Ring Theory Conference, Miskolc, Hungary, Preprint, 1996.

\bibitem{S02}  Y.~Sommerh\"{a}user, \emph{Yetter-Drinfel'd Hopf algebras over groups of prime order}, Lect. Notes Math.  1789, Springer, Berlin, 2002.
\bibitem{Vi01}
E.B. Vinberg, Convex homogeneous cones, Transl. of Moscow Math. Soc. No. 12 (1963)
340-403.



\bibitem{Zh99} S. C. Zhang,  H. X. Chen,   \emph{The double bicrossproducts in braided tensor categories},  Comm. Algebra, { 29} (2001)(1), 31--66.


\bibitem{ZQ001}
 Q. Zhang, H. Yu and C. Wang,  \emph{Hom-Lie algebroids and hom-left-symmetric algebroids}, J. Geom. Phys. 116(2017), 187--203.

\bibitem{Z1}T. Zhang, \emph{Double cross biproduct and bi-cycle bicrossproduct Lie bialgebras}, J. Gen. Lie Theory Appl. { 4} (2010), S090602.

\bibitem{Z2}T. Zhang, \emph{Unified products for braided Lie bialgebras with applications},  J. Lie Theory { 32}(3) (2022), 671--696.

\bibitem{Z3}T. Zhang, \emph{Extending structures for 3-Lie algebras},   Comm. Algebra  { 50}(4)(2022), 1469--1497.

\bibitem{Z4} T. Zhang, \emph{Extending structures for infinitesimal bialgebras}, 	arXiv:2112.11977v1.

\bibitem{ZY}T. Zhang and H. Yao,  \emph{Braided anti-flexible bialgebras},    arXiv:2208.02221, to appear in J. Alg. Appl.


%


\bibitem{ZCY}
J. Zhao, L. Chen, L. Yuan, \emph{Extending structures of  Lie conformal superalgebras}, Comm. Algebra 47(4)(2019), 1541--1555.



\end{thebibliography}
\end{document}